\documentclass[reqno,12pt]{amsart}

\usepackage{times}
\usepackage{lmodern}
\usepackage{textcomp}
\usepackage{amsmath,cancel}
\usepackage{amsfonts,enumitem}
\usepackage{amssymb}
\usepackage{color}
\usepackage{mathrsfs}
\usepackage[dvipsnames]{xcolor}
\usepackage{esint}
\usepackage[colorlinks=true]{hyperref}
\usepackage{cleveref}
\newtheorem{thm}{Theorem}[section]
\newtheorem{lem}[thm]{Lemma}
\newtheorem{cor}[thm]{Corollary}
\newtheorem{prop}[thm]{Proposition}

\newtheorem{rem}[thm]{Remark}

\numberwithin{equation}{section}

\newcommand{\R}{\mathbb{R}}

\newcommand{\ve}{\varepsilon}
\newcommand{\rd}{\mathrm{d}}

\newcommand{\dhr}{\mathrel{\lhook\joinrel\relbar\kern-.8ex\joinrel\lhook\joinrel\rightarrow}}

\begin{document}

\title[Chemotaxis Model with Indirect Signal Production]{A Sharp Mass Threshold for Boundedness in a Critical Quasilinear Chemotaxis Model with Indirect Signal Production: The Radially Symmetric Case}

\author{Philippe Lauren\c{c}ot}
\address{Laboratoire de Math\'ematiques (LAMA) UMR~5127, Universit\'e Savoie Mont Blanc, CNRS\\	F--73000 Chamb\'ery, France}
\email{philippe.laurencot@univ-smb.fr}
\author{Christoph Walker}
\address{Leibniz Universit\"at Hannover\\
Institut f\"ur Angewandte Mathematik\\
Welfengarten 1\\
30167 Hannover\\
Germany}
\email{walker@ifam.uni-hannover.de}
\date{\today}

\begin{abstract}
A critical quasilinear chemotaxis model with indirect signal production in space dimensions higher than two is studied under radial symmetry. Exploiting the variational structure of the system, the existence of a sharp mass threshold $M_c>0$ is established: all solutions with mass below $M_c$ are global and bounded, while those with mass above $M_c$ and sufficiently negative energy are global but blow up in infinite time. Moreover, the value of the critical mass $M_c$ is related to the best constant of a variant of a Hardy-Littlewood-Sobolev inequality.
\end{abstract}
%
\keywords{Chemotaxis, critical mass, indirect signal production, unbounded global solutions}
\subjclass[2020]{35M33 35K10 35J62 35K59}
\maketitle

\section{Introduction}\label{sec.1}

The J\"ager-Luckhaus version of the parabolic-elliptic Keller-Segel chemotaxis system with indirect signal production 
\begin{equation}\label{jl}
\begin{aligned}
	& \partial_t u = \mathrm{div}\big( \nabla u - u \nabla v \big), & \qquad (t,x)\in (0,\infty)\times\Omega, \\
	& 0 = \Delta v + w - \frac{1}{|\Omega|} \int_\Omega w\ \mathrm{d}x\,, \quad \int_\Omega v\ \mathrm{d}x = 0, & \qquad (t,x)\in (0,\infty)\times\Omega, \\
	& \partial_t w = u - w, & \qquad (t,x)\in (0,\infty)\times\Omega, \\
	& \nabla u\cdot \mathbf{n} = \nabla v\cdot \mathbf{n} = 0, & \qquad (t,x)\in (0,\infty)\times\partial\Omega, \\
	& (u,w)(0) = (u^0,w^0), & \qquad x\in\Omega,
\end{aligned}
\end{equation}
exhibits an intriguing dynamical behavior in a disk $\Omega$ of $\mathbb{R}^2$, uncovered in \cite{TW2017}. More precisely, while all non-negative radially symmetric classical solutions are global, there is an explicit sharp mass threshold $8\pi$ with the following property: if $\|u^0\|_1<8\pi$, then the corresponding solution $(u,v,w)$ to~\eqref{jl} is bounded uniformly with respect to time and space; that is,
\begin{equation*}
	\sup_{t\ge 0} \left( \|u(t)\|_\infty +  \|v(t)\|_\infty +  \|w(t)\|_\infty \right) < \infty\,. 
\end{equation*}
In contrast, given $M>8\pi$, there are non-negative radially symmetric initial data $(u^0,w^0)$  with $\|u^0\|_1=M$ for which the first component $u$ of the corresponding solution $(u,v,w)$ to~\eqref{jl} is unbounded, the $L_\infty$-norm of $u(t)$ growing at least at an exponential rate. It is further shown in \cite[Theorem~1.1]{ML2024} and \cite[Theorem~1.1]{DLY2026} that there is $M^*\in [8\pi,32\pi)$ such that, if $\|u_0\|_1>M^*$, then solutions emanating from sufficiently concentrated initial data are unbounded and collapse to a Dirac mass as $t\to\infty$. On the one hand, this property is clearly reminiscent of the dynamical behavior of the Smoluchowski-Poisson equation and the parabolic-elliptic Keller-Segel chemotaxis system, for which it is known that the mass threshold $8\pi$ separates global existence from the onset of finite time blowup for radially symmetric solutions in a two-dimensional disk, see \cite{Bi2020} and the references therein. On the other hand, the onset of unboundedness beyond a sharp mass threshold is not restricted to the radially symmetric setting in space dimension~$2$ and is also true in a general bounded domain of $\mathbb{R}^2$ and for related models including the parabolic-parabolic Keller-Segel system with indirect signal production, though with a different threshold value \cite{La2019}. In higher space dimensions $d\ge 3$, unbounded non-negative radially symmetric solutions are constructed whatever the value of $\|u_0\|_1$ in \cite[Proposition~1.1]{FLT2023}, and the occurrence of finite time blowup for sufficiently concentrated initial data is shown in \cite{JL2026}.

\medskip

A similar issue is investigated recently in \cite{FLT2023} for a quasilinear version of~\eqref{jl} in higher space dimension $d\ge 3$, which reads
\begin{subequations}\label{E}
\begin{align}
	&\partial_t u  = \mathrm{div}\big(m(1+u)^{m-1}\nabla u-u\nabla v\big)\,, \qquad &(t,x)\in (0,\infty)\times\Omega\,, \label{E1}\\
	&0=\Delta v+w-\langle w\rangle\,, \quad \langle v \rangle = 0\,, \qquad &(t,x)\in (0,\infty)\times\Omega\,, \label{E2}\\
	&\partial_t w=u-w\,, \qquad &(t,x)\in (0,\infty)\times\Omega\,, \label{E3} \\
	&\nabla u\cdot \mathbf{n}  = \nabla v\cdot \mathbf{n} = 0\,, \qquad &(t,x)\in (0,\infty)\times\partial\Omega\,, \label{E4} \\
	&(u,w)(0)  = (u^0,w^0)\,, \qquad &x\in \Omega\,, \label{E5}
\end{align}
\end{subequations}
where $\Omega$ is a bounded and smooth domain of $\mathbb{R}^d$, 
\begin{equation*}
    m=m_c:= \frac{2(d-1)}{d}\in (1,2)\,,
\end{equation*}
and 
\begin{equation*}
	\langle f\rangle:= \frac{1}{|\Omega|} \int_\Omega f(x)\,\mathrm{d} x\,,\quad f\in L_1(\Omega)\,.
\end{equation*}
The occurrence of a critical mass phenomenon is shown in \cite{FLT2023}. More precisely, on the one hand, the existence of a number $M_{FLT}^l>0$ is established such that solutions corresponding to any non-negative initial data with $\|u^0\|_1<M_{FLT}^l$ exist globally and remain bounded. On the other hand, when $\Omega$ is a ball, it is shown that, for each $M>M_{FLT}^u := 2 (2\pi)^{d/2} d^{d-1}/\Gamma(d/2)$, there are non-negative radially symmetric initial data with $\|u^0\|_1=M$  for which the solution blows up in infinite time. However, the value of $M_{FLT}^l$ is not identified and no information is provided for initial values with $M\in \big(M_{FLT}^l,M_{FLT}^u\big)$ if any. Let us recall that such a critical mass phenomenon only occurs for $m=m_c$. Indeed, for $m>m_c$, all solutions to~\eqref{E} are global while, for $m\in (1,m_c)$, there are solutions to~\eqref{E} blowing in finite time whatever the value of $\|u^0\|_1$, see \cite{FLT2023, JL2026}.

\medskip

The main purpose of this work is to exploit the variational structure of~\eqref{E} in order to identify \textit{precisely} the critical mass threshold $M_c>0$ separating boundedness and unboundedness of radially symmetric global solutions in a ball. In fact, as we shall see below, $M_c>0$ coincides with the critical mass for the quasilinear Patlak–Keller–Segel model identified in \cite{BCL2009}. Moreover, owing to the availability of an energy functional associated with~\eqref{E}, we show that \textit{any} solution with initial mass $\|u^0\|_1>M_c$ and sufficiently negative energy exhibits infinite time blowup.

\section{Main results}\label{sec.mr}

We first establish the global well-posedness of~\eqref{E} and set up some notation. For $p\in (1,\infty)$, we denote the domain of the Laplace operator in $L_p(\Omega)$ supplemented with homogeneous Neumann boundary conditions by $W_{p,N}^2(\Omega)$; that is, 
\begin{equation*}
	W_{p,N}^2(\Omega) := \{ f\in W_p^2(\Omega)\, :\, \nabla f\cdot \mathbf{n} = 0 \;\text{ on }\; \partial\Omega\}.
\end{equation*}
We also introduce the isomorphism
\begin{subequations}\label{opK}
\begin{equation}
	\mathcal{K}:\big\{f\in  L_p(\Omega)\,:\, \langle f\rangle=0\big\}\rightarrow\big\{f\in W_{p,N}^2(\Omega)\,:\, \langle f\rangle=0\big\}\,, \label{opKa}
\end{equation} 
defined by
\begin{equation}
	-\Delta \mathcal{K}[f]=f\ \text{ in }\ \Omega\,,\quad \nabla \mathcal{K}[f]\cdot \mathbf{n}=0 \ \text{ on }\ \partial\Omega\,,\quad \langle \mathcal{K}[f]\rangle=0\,, \label{opKb}
\end{equation}
\end{subequations}
for $f\in  L_p(\Omega)$ with $\langle f\rangle=0$. We write $\mathbb{B}_R$ for the ball in $\R^d$ of radius $R>0$ centered at the origin.

\begin{thm}\label{T1:Ex}
Let $p\in (2d,\infty)$ and consider an initial value
\begin{equation}\label{a1}
    (u^0,w^0)\in W_{p}^{1,+}(\Omega)  \times L_\infty^+(\Omega) \,.
\end{equation}
Then there is a unique couple $(u,w)$ of functions satisfying
\begin{subequations}
\begin{align}
&u \in  C^1\big((0,\infty),L_p(\Omega)\big) \cap  C\big((0,\infty),W_{p,N}^{2}(\Omega)\big)\cap C\big([0,\infty),W_{p}^{1}(\Omega) \big)\,,\label{regularityA}\\
& w\in C^1 \big([0,\infty),L_\infty(\Omega)\big)\,,\qquad v := \mathcal{K}[w-\langle w\rangle] \in C^1\big([0,\infty),W_{p,N}^{2}(\Omega)\big)\,,\label{regularityB}
\end{align}
such that $(u,v,w)$ is a global non-negative strong solution to~\eqref{E}.
\end{subequations}
In fact, the mapping 
\begin{equation*}
	\boldsymbol{\Psi}:(u^0,w^0)\mapsto (u,w)
\end{equation*} 
defines a global semiflow on $W_{p}^{1,+}(\Omega) \times L_\infty^+(\Omega)$.  
Moreover, 
\begin{itemize}
    \item  for $t\ge 0$, it holds that
    \begin{equation}
        \|u(t)\|_1 = \|u^0\|_1\,, \quad \|w(t)\|_1 \le \max\big\{ \|w^0\|_1 , \|u^0\|_1 \big\}\,; \label{L1}
    \end{equation}
	\item  if $u\in L_\infty((0,\infty),L_m(\Omega))$, then $(u,w)\in L_\infty((0,\infty)\times\Omega,\R^2)$;
	\item  if $\Omega=\mathbb{B}_R$  and $(u^0,w^0)$ are radially symmetric, then $(u,v,w)(t)$ are radially symmetric for all $t\ge 0$.
\end{itemize}
\end{thm}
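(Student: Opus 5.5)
The plan has four stages: local well-posedness, the $L_1$ bounds and positivity, a conditional boundedness argument that also yields global existence, and radial symmetry.

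\emph{Local well-posedness.} We recast \eqref{E} as a quasilinear parabolic equation for $u$ coupled with an ODE for $w$. For given $w\in L_\infty(\Omega)$ we have $v=\mathcal{K}[w-\langle w\rangle]\in W^2_{p,N}(\Omega)$ for every $p<\infty$, so $\nabla v\in W^1_p(\Omega)\hookrightarrow L_\infty(\Omega)$ when $p>d$. Solving \eqref{E3} gives $w(t)=e^{-t}w^0+\int_0^t e^{s-t}u(s)\,\rd s$, which turns the system into a single quasilinear equation for $u$ with a nonlocal, Volterra-type lower-order term. We then apply Amann's theory of quasilinear parabolic problems in the phase space $W^1_p(\Omega)$, with $p>2d$ so that $W^1_p\hookrightarrow C(\bar\Omega)$ with some margin. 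The diffusion coefficient $m(1+u)^{m-1}\ge m>0$ is uniformly elliptic for $u\ge -1/2$. Alternatively, we run a fixed-point argument on the pair $(u,w)\in C([0,T],W^1_p)\times C([0,T],L_\infty)$, using maximal regularity for the frozen operator. This yields a unique maximal strong solution with the regularity \eqref{regularityA}–\eqref{regularityB}, continuous dependence on the data (the semiflow property), and the blow-up alternative
\[
\limsup_{t\to T_{\max}}\big(\|u(t)\|_{W^1_p}+\|w(t)\|_\infty\big)=\infty .
\]

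\emph{$L_1$ bounds and positivity.} Non-negativity of $u$ follows from the comparison principle, since $u\equiv0$ is a solution of the $u$-equation with $v$ given. Then $w\ge0$ follows from the variation-of-constants formula. Integrating \eqref{E1} and using the no-flux conditions gives conservation of $\|u\|_1$. For $w$ we have $\frac{\rd}{\rd t}\|w\|_1=\|u^0\|_1-\|w\|_1$, hence \eqref{L1}.

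\emph{Conditional boundedness and global existence.} Suppose $\sup_t\|u(t)\|_{m}\le K$ on $[0,T)$. First, $\|w(t)\|_m\le \|w^0\|_m+K$ from the ODE. Next we get an $L_q$ estimate for $u$ for large $q$ by testing \eqref{E1} with $u^{q-1}$. The drift term becomes
\[
\frac{q-1}{q}\int u^q(w-\langle w\rangle)\,\rd x,
\]
and we bound $\int u^q w$ via Hölder, combined with $\|w\|_{q+1}\lesssim \|w^0\|_\infty+\sup_s\|u(s)\|_{q+1}$, against the diffusion term $\int (1+u)^{m-1}u^{q-2}|\nabla u|^2$ using Gagliardo–Nirenberg. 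This is exactly where criticality enters. A bound of $\|u\|_{L_m}$ controls $\|w\|_{L_m}$, and $m=2(d-1)/d>2d/(d+2)$ gives $\nabla v\in L_{dm/(d-m)}$, which is subcritical for the porous-medium-type diffusion $u^{m-1}$. I expect this step to be the main obstacle. The nonlocal-in-time coupling through $w$ requires a Gronwall argument on $\sup_{s\le t}\|u(s)\|_q$ rather than a pointwise differential inequality. We then pass from $L_q$ to $L_\infty$ by Moser–Alikakos iteration, and obtain $w\in L_\infty$ from the ODE.

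The same scheme gives global existence without any assumption on $\|u\|_m$. Since $w$ is driven linearly by $u$, we have $\|w(t)\|_\infty\le \|w^0\|_\infty+\int_0^t\|u\|_\infty$, so $v$ cannot blow up before $u$ does. The energy-type estimate then gives bounds growing at most exponentially on finite time intervals. Alternatively, the computation of \cite{FLT2023} for $m\ge m_c$, which rests on the same $L_q$ test, gives finite-time bounds. Parabolic Schauder or $W^1_p$ regularity then upgrades the $L_\infty$ bounds and rules out $T_{\max}<\infty$.

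\emph{Radial symmetry.} This follows from uniqueness: rotations commute with the system and with $\mathcal{K}$ when $\Omega=\mathbb{B}_R$, so rotated solutions are again solutions with the same data.
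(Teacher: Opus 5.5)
Your overall architecture matches the paper's sketch. You use Amann's quasilinear theory with bootstrapping for local well-posedness and the semiflow. You use $L_q$-type estimates, which the paper simply imports from \cite[Lemma~3.4]{FLT2023}, both for bounds on finite time intervals and for the implication $u\in L_\infty((0,\infty),L_m(\Omega))\Rightarrow (u,w)\in L_\infty$. Parabolic H\"older regularity then excludes $T_{\max}<\infty$, and uniqueness gives radial symmetry.

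\textbf{The gap: global existence.} The step that fails as written is your self-contained global existence argument. This step, rather than the conditional one, is where criticality actually matters. With only the conserved $L_1$ norm, the $u^{q-1}$ test is exactly critical at $m=m_c$. For $f=u^{(q+m-1)/2}$, Gagliardo--Nirenberg between $\|\nabla f\|_2$ and $\|u\|_1$ bounds $\|u\|_{q+1}^{q+1}$ by $\|\nabla f\|_2^{2qd/(d(q+m-2)+2)}$. This exponent equals $2$ precisely when $m=2-2/d$, so absorbing it into the diffusion requires small mass.

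The loop $\|u(t)\|_\infty\le\|u^0\|_\infty\exp\big(\int_0^t\|w\|_\infty\big)$, $\|w(t)\|_\infty\le\|w^0\|_\infty+\int_0^t\|u\|_\infty$ does not help either. It is superlinear (for $Z(t)=\int_0^t\|w\|_\infty$ it amounts to $Z''\lesssim e^{Z}$) and permits finite-time blow-up. It also never uses $m=m_c$, so an argument of this kind would give global existence for $m\in(1,m_c)$, where finite-time blow-up is known. Hence ``bounds growing at most exponentially'' is unjustified. You must either rely on \cite[Lemma~3.4]{FLT2023}, exactly as the paper does, or give an argument that exploits $m=m_c$.

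\textbf{A possible repair.} One such argument uses the Lyapunov functional of Proposition~\ref{PP2}, whose proof works on compact subintervals of $[0,T_{\max})$. Since $m=m_c$ gives $\|u\|_{2d/(d+2)}\le M^{1/d}\|u\|_m^{m/2}$, Sobolev--Poincar\'e for $v$ and \eqref{eq.A2} yield $\mathcal{L}(u,w)\ge \frac{1}{2(m-1)}\|u\|_m^m-C_1-C_2\|\nabla v\|_2^2$. On the other hand $\partial_t v=\mathcal{K}[u-w-\langle u-w\rangle]$, so $\|\nabla v(t)\|_2\le \|\nabla v(s)\|_2+\sqrt{t-s}\,\big(\int_s^t\mathcal{D}\big)^{1/2}$ with $\int_s^t\mathcal{D}\le \mathcal{L}(u^0,w^0)-\mathcal{L}(u,w)(t)$. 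For $t-s\le 1/(4C_2)$ this gives $\|\nabla v(t)\|_2\le 2\|\nabla v(s)\|_2+C$. Hence $\|\nabla v\|_2$, and then $\|u\|_m$, are bounded on every finite interval, and your subcritical conditional argument finishes.

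\textbf{A secondary issue in the conditional step.} A Gronwall argument on $\sup_{s\le t}\|u(s)\|_q$ does not close, because the dissipation at time $t$ does not control $\|u(s)\|_{q+1}$ for $s<t$. Use instead $\int u^q w\le \int u^{q+1}+\int w^{q+1}$ and add a multiple of $\frac{\rd}{\rd t}\int w^{q+1}\le \int u^{q+1}-\int w^{q+1}$.
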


A short sketch of the proof of Theorem~\ref{T1:Ex} is contained in Section~\ref{sec.wp}.

\medskip

From now on, given an initial value $(u^0,w^0)$ satisfying~\eqref{a1}, we shall refer to 
\begin{equation*}
    (u,w)=\boldsymbol{\Psi}\big(u^0,w^0\big)
\end{equation*} 
as the (corresponding) solution to~\eqref{E}, with the implicit definition 
\begin{equation*}
    v := \mathcal{K}[w-\langle w\rangle]\,.
\end{equation*}


As pointed out above, system~\eqref{E} has a variational structure, which we describe now, along with an appropriate functional framework. Let $\phi:(0,\infty)\rightarrow (0,\infty)$ be given by
\begin{equation}\label{phi}
	\phi''(z)=m\frac{(1+z)^{m-1}}{z}\,,\quad z>0\,,\qquad \phi(1)=\phi'(1)=0\,.
\end{equation}
Then the functional 
\begin{equation*}
	\mathcal{L}: L_m^+(\Omega)\times  L_p^+(\Omega)\rightarrow \R
\end{equation*}
given by
\begin{equation}\label{FL}
	\mathcal{L}(u,w):=\int_\Omega\big(\phi(u)- u\, \mathcal{K}[ w -\langle  w  \rangle]\big)\,\rd x+\frac{1}{2}\int_\Omega\vert\nabla\mathcal{K}[ w -\langle  w  \rangle]\vert^2\,\rd x
\end{equation}
is well-defined for all $(u,w) \in  L_m^+(\Omega)\times  L_p^+(\Omega)$ whenever $p\ge 2d/(2+d)$. With the notation $v=\mathcal{K}[ w -\langle  w \rangle]$, an equivalent formula for $\mathcal{L}(u,w)$ is
\begin{equation}
\begin{split}
	\mathcal{L}(u,w)&=\int_\Omega\big(\phi(u)- u  v\big)\,\rd x+\frac{1}{2}\int_\Omega\vert\nabla v\vert^2\,\rd x \\
	& =\int_\Omega\big(\phi(u)- u  v\big)\,\rd x+\frac{1}{2}\int_\Omega  w  v\,\rd x\,.
\end{split}\label{i1q}
\end{equation}
We show that $\mathcal{L}$ is a Lyapunov functional for~\eqref{E}. 

\begin{prop}\label{PP2}
Let $p\in (2d,\infty)$ and assume that the initial value $(u^0,w^0)$ satisfies~\eqref{a1}. Then the corresponding solution $(u,w)$ to~\eqref{E} satisfies
\begin{equation*}
\mathcal{L}(u,w)(t)+\int_0^t \mathcal{D}(u,w)(s)\,\rd s\le \mathcal{L}\big(u^0,w^0\big)\,,\quad t\ge 0\,,
\end{equation*}
where
\begin{equation*}
	\mathcal{D}(u,w):=\int_\Omega u\big\vert \nabla\big(\phi'(u)-\mathcal{K}[ w-\langle  w \rangle]\big)\big\vert^2\,\rd x+\|\nabla \mathcal{K}[u-w - \langle u-w\rangle]\|_2^2\ge 0\,.
\end{equation*}
In particular, $\mathcal{L}$ is a strict Lyapunov functional for~\eqref{E}. 
\end{prop}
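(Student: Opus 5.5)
The plan is to compute $\frac{\rd}{\rd t}\mathcal{L}(u,w)$ along the strong solution provided by Theorem~\ref{T1:Ex} and then integrate in time. Set $v=\mathcal{K}[w-\langle w\rangle]$. Since $\mathcal{K}$ is symmetric on mean-zero functions and $\int_\Omega u\,\rd x$ is conserved by \eqref{L1}, the chain rule applied to \eqref{i1q} gives formally
\begin{equation*}
\frac{\rd}{\rd t}\mathcal{L}(u,w) = \int_\Omega \partial_t u\,\big(\phi'(u)-v\big)\,\rd x - \int_\Omega u\,\partial_t v\,\rd x + \int_\Omega w\,\partial_t v\,\rd x\,.
\end{equation*}
The last term comes from $\frac12\frac{\rd}{\rd t}\int_\Omega wv\,\rd x=\int_\Omega w\,\partial_t v\,\rd x$, which uses $\int_\Omega w\,\mathcal{K}[\partial_t w-\langle\partial_t w\rangle]\,\rd x=\int_\Omega \partial_t w\, v\,\rd x$ together with $\langle v\rangle=0$.

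For the first term, I use \eqref{E1}, written as $\partial_t u=\mathrm{div}\big(u\nabla(\phi'(u)-v)\big)$; this is valid because $u\phi''(u)=m(1+u)^{m-1}$. The no-flux condition \eqref{E4} then yields $-\int_\Omega u|\nabla(\phi'(u)-v)|^2\,\rd x$.

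For the remaining two terms, I write $\partial_t v=\mathcal{K}[\partial_t w-\langle\partial_t w\rangle]=\mathcal{K}[f]$ with $f:=u-w-\langle u-w\rangle$, using \eqref{E3}. Since $\langle \partial_t v\rangle=0$, the constants drop out and
\begin{equation*}
-\int_\Omega (u-w)\,\mathcal{K}[f]\,\rd x=-\int_\Omega f\,\mathcal{K}[f]\,\rd x=-\|\nabla\mathcal{K}[f]\|_2^2\,.
\end{equation*}
This gives the identity $\frac{\rd}{\rd t}\mathcal{L}=-\mathcal{D}$, and integrating over $(0,t)$ produces the claim, in fact with equality. Strictness follows because $\mathcal{D}=0$ forces $u-w$ to be constant and $u\,\nabla(\phi'(u)-v)=0$, which characterizes steady states.

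The main obstacle is justifying the computation, in particular near $t=0$ and where $u$ vanishes, since $\phi'(u)$ contains $m\ln u$-type singular behavior. Indeed $\phi''(z)\sim m/z$ near $z=0$, so $\phi'(u)$ is not obviously well-defined or differentiable when $u$ has zeros.

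I would handle this by regularization. Replace $\phi$ by $\phi_\ve(z):=\phi(z+\ve)$, or equivalently work with $u+\ve$. Then compute
\begin{equation*}
\frac{\rd}{\rd t}\int_\Omega\phi(u+\ve)\,\rd x=-\int_\Omega \phi''(u+\ve)\,\nabla u\cdot\big(m(1+u)^{m-1}\nabla u-u\nabla v\big)\,\rd x
\end{equation*}
on $[s,t]\subset(0,\infty)$, using the regularity \eqref{regularityA}–\eqref{regularityB}. Here $u\in C^1((0,\infty),L_p)\cap C((0,\infty),W^2_{p,N})$ with $p>2d$, so $u$ and $\nabla u$ are continuous and bounded. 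One then collects terms, bounds the cross term via $u\,\phi''(u+\ve)\le m(1+u)^{m-1}$, and lets $\ve\to0$. Fatou's lemma handles the dissipation, which is where the inequality rather than equality comes from, and dominated convergence handles the $\phi$ term; note that $\phi$ is continuous on $[0,\infty)$ since $z\ln z$-type behavior is integrable. Finally, let $s\to0$ using $u\in C([0,\infty),W^1_p)$ and $w\in C^1([0,\infty),L_\infty)$, which gives continuity of $\mathcal{L}$ at $t=0$.
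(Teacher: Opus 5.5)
Your proposal is correct, but it regularizes differently from the paper. The paper perturbs the initial datum to $u^0+\ve$ and uses the comparison principle to get $u_\ve\ge e^{-\gamma_\ve(T)t}\ve>0$. For this strictly positive solution it derives the exact identity $\frac{\rd}{\rd t}\mathcal{L}(u_\ve,w_\ve)=-\mathcal{D}(u_\ve,w_\ve)$. It then lets $\ve\to0$ using continuous dependence on the initial data (the semiflow property in Theorem~\ref{T1:Ex}) together with Fatou's lemma.

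You instead keep the original solution and shift the entropy to $\phi(\cdot+\ve)$ on $[s,t]\subset(0,\infty)$. This avoids both the comparison principle and the continuity of the semiflow. The price is that the $\ve$-derivative is no longer a perfect square, so you must pass to the limit term by term. Two details should be made explicit there.

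First, the cross term $\int u\,\phi''(u+\ve)\nabla u\cdot\nabla v$ is dominated by $m(1+u)^{m-1}|\nabla u||\nabla v|$, but it converges only to $\int_{\{u>0\}} m(1+u)^{m-1}\nabla u\cdot\nabla v$. To recombine it with the $\ve$-independent term $\int m(1+u)^{m-1}\nabla u\cdot\nabla v$ coming from $-\int v\,\partial_t u$, and so recover $\int u|\nabla(\phi'(u)-v)|^2$, you need $\nabla u=0$ on $\{u=0\}$. This holds because $u(t)\in C^1(\bar\Omega)$ is non-negative for $t>0$, so interior zeros are minima.

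Second, $\phi''$ is decreasing on $(0,\infty)$, so $\phi''(u+\ve)$ increases to $\phi''(u)$ as $\ve\downarrow0$. Monotone convergence therefore applies to the main dissipative term, and your route actually yields the energy equality, slightly more than the stated inequality.

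A minor caveat on your strictness remark, which the paper does not prove either: $\mathcal{D}=0$ only gives $u-w$ constant. Concluding $u=w$ additionally requires $\langle w\rangle=\langle u\rangle$. This does hold on $\omega$-limit sets, since $\langle w(t)\rangle-\langle u^0\rangle=e^{-t}\big(\langle w^0\rangle-\langle u^0\rangle\big)$.
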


A proof of Proposition~\ref{PP2} is given in Section~\ref{sec.2}.

\medskip

After these preliminaries we can state our main result concerning the existence of a precisely identified critical mass threshold for~\eqref{E} in a ball $\mathbb{B}_R$ of $\mathbb{R}^d$ with radially symmetric initial data. To this end, the subset of radially symmetric functions of a set $X$ of functions defined on $\mathbb{B}_R$ is denoted by $X_{rad}$, and we recall from Theorem~\ref{T1:Ex} that, for $\bar{M}\ge M>0$, the set 
\begin{equation}
    \mathcal{I}_{M,\bar{M},rad}(\mathbb{B}_R) := \left\{ 
    \begin{array}{l}
    (u,w)\in W_{p,rad}^{1,+}(\mathbb{B}_R) \times L_{\infty,rad}^+(\mathbb{B}_R) \text{ such that }\\[8pt]
    \hspace{1cm} \|u\|_1=M \text{ and } \|w\|_1 \le \bar{M} 
    \end{array}\right\} \label{IS}
\end{equation}
is positively invariant for the semiflow $\boldsymbol{\Psi}$.
 
\begin{thm}\label{THM1}
There is a critical mass $M_c>0$ (defined in~\eqref{Mc} below) such that the following hold:
\begin{itemize}
\item [\textbf{(I)}] For any initial value $(u^0,w^0)\in \mathcal{I}_{M,\bar{M},rad}(\mathbb{B}_R)$ with $M\in (0,M_c)$ and \mbox{$\bar{M}\ge M$}, the global solution $(u,w)$ to~\eqref{E} is bounded.
\item [\textbf{(II)}] Let $M>M_c$. There is $\mu_{M}^s\in\mathbb{R}$ such that, for any $\bar{M}\ge M$, the set
\begin{equation*}
    \mathcal{F}_{M,\bar{M}} := \left\{ (u^0,w^0)\in \mathcal{I}_{M,\bar{M},rad}(\mathbb{B}_R)\ :\ \mathcal{L}(u^0,w^0)<\mu_{M}^s \right\}
\end{equation*}
is non-empty and, for any initial value $(u^0,w^0)\in \mathcal{F}_{M,\bar{M}}$, the global solution $(u,w)$ to~\eqref{E} blows up in infinite time.
\end{itemize}
\end{thm}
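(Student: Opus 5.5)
The plan is to work entirely with the Lyapunov functional $\mathcal{L}$ of Proposition~\ref{PP2}, combined with a Hardy--Littlewood--Sobolev type inequality restricted to radial functions. We define $M_c$ through the best constant $C_*$ in
\begin{equation*}
\int_{\mathbb{B}_R} |\nabla \mathcal{K}[f-\langle f\rangle]|^2\,\rd x \le C_* \|f\|_1^{2/d}\|f\|_m^m\,,
\end{equation*}
for $f\in L_{m,rad}^+(\mathbb{B}_R)$, with $m=m_c$. The exponents are dictated by scaling, since $m_c=2(d-1)/d$ is exactly the critical homogeneity. We then set $M_c:=(2/(C_*(m-1)))^{d/(d-2)}$, suitably normalised so that it matches the constant of \cite{BCL2009}. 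The only structural facts we need are that $\phi(z)\ge z^m/(m-1)-C(1+z)$, which follows from~\eqref{phi}, and that $\mathcal{L}(u,w)$ depends on $w$ only through $v$.

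\textbf{Part (I).} For $v=\mathcal{K}[w-\langle w\rangle]$ we first minimise in $w$, equivalently in $v$, by completing the square:
\begin{equation*}
-\int u v+\frac12\|\nabla v\|_2^2\ \ge\ -\frac12\|\nabla\mathcal{K}[u-\langle u\rangle]\|_2^2 .
\end{equation*}
Hence
\begin{equation*}
\mathcal{L}(u,w)\ge \frac{1}{m-1}\|u\|_m^m-\frac{C_*}{2}M^{2/d}\|u\|_m^m-CM .
\end{equation*}
When $M<M_c$ the coefficient of $\|u\|_m^m$ is positive. Since $\mathcal{L}(u(t),w(t))\le\mathcal{L}(u^0,w^0)$ by Proposition~\ref{PP2}, this gives $u\in L_\infty((0,\infty),L_m)$, and Theorem~\ref{T1:Ex} then yields boundedness of $(u,w)$. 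The point to check is that the radial constant $C_*$ is what actually defines $M_c$. The quadratic radial estimate itself (Newton's theorem in radial form, $|\nabla \mathcal{K}f|(r)\le \|f\|_{L_1(\mathbb{B}_r)}/(|\partial\mathbb{B}_1| r^{d-1})$) follows from elementary interpolation, so finiteness of $C_*$ is routine.

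\textbf{Part (II).} We define
\begin{equation*}
\mu_M^s:=\inf\{\mathcal{L}(u,w)\,:\,(u,w)\text{ radial},\ \|u\|_1=M,\ u\in L_\infty\}.
\end{equation*}
More precisely, we take $\mu_M^s$ to be the infimum of $\mathcal{L}$ over the bounded radial stationary states of mass $M$; this set is nonempty or empty, and if it is empty we set $\mu_M^s=+\infty$ (or any real number). For $M>M_c$ the set $\mathcal{F}_{M,\bar M}$ is nonempty: take $w=u$ equal to a concentrating rescaling $u_\lambda(x)=\lambda^d f(\lambda x)$ of a near-extremal $f$ for $C_*$. Then $\mathcal{L}\approx \lambda^{d-2}\bigl(\frac{1}{m-1}-\frac{C_*}{2}M^{2/d}\bigr)\|f\|_m^m+O(1)\to-\infty$, so we must also show that $\mu_M^s>-\infty$.

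The core of the blowup argument is by contradiction. Suppose the solution is bounded. Then compactness, parabolic smoothing and the dissipation $\int_0^\infty\mathcal{D}\,ds<\infty$ give a sequence $t_k\to\infty$ along which $(u,w)(t_k)$ converges to a stationary solution $(u_\infty,u_\infty)$ with mass $M$. Here $\mathcal{D}=0$ forces $w=u$ and $\phi'(u)-v$ constant on $\{u>0\}$. Lower semicontinuity along the convergence yields $\mathcal{L}(u_\infty,u_\infty)\le\mathcal{L}(u^0,w^0)<\mu_M^s$, which contradicts the definition of $\mu_M^s$. Infinite-time blowup then follows, since solutions are global by Theorem~\ref{T1:Ex}.

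The main obstacle is the lower bound $\mu_M^s>-\infty$, namely that radial steady states of the system, equivalently of the quasilinear Keller--Segel problem with given mass $M$, have energy bounded from below uniformly. I would first try radial ODE analysis of $\phi'(u)-v=\text{const}$, which reduces to a Lane--Emden-type problem on $\mathbb{B}_R$. I would then bound $\mathcal{L}$ of each solution via the Pohozaev identity, which in the critical case expresses $\mathcal{L}$ in terms of boundary terms at $|x|=R$. Those boundary terms are controlled using $M$ and the radial estimate above, since $|\nabla v|(R)\le M/(|\partial\mathbb{B}_1|R^{d-1})$ and $u(R)$ is bounded by the radial monotonicity of steady states.
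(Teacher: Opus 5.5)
Your route is correct in outline and genuinely different from the paper's in Part (I) and in the key bound $\mu_M^s>-\infty$. The rest of (II) matches the paper: nonemptiness of $\mathcal{F}_{M,\bar M}$ via a concentrating rescaled (near-)extremiser, then the LaSalle contradiction with $\mu_M^s=\inf_{\mathcal{S}_{M,rad}}\mathcal{L}$, which is finite once bounded below since the constants $u=w=a$ are stationary. Both of your alternatives are shorter than the paper's, but the sketch of the crux misidentifies what must be controlled and needs the repairs below.

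\textbf{Part (I).} Completing the square gives $\mathcal{L}(u,w)\ge\mathcal{L}(u,u)$, which removes $w$, $\bar M$ and $v(R)$ from the argument. The paper instead bounds $\|\nabla v\|_2$, controls $v(R)$ via Lemma~\ref{LL1}, and only then recovers $\|u\|_m$ by Sobolev and interpolation. Your ``point to check'' is settled by Newton's theorem. For radial $f\ge0$, with $F(r):=\int_0^r s^{d-1}f(s)\,\rd s$ and $\bar f$ the zero extension,
\[
\|\nabla\mathcal{K}[f-\langle f\rangle]\|_2^2=\int_{\mathbb{R}^d}\bar f\,(E_d*\bar f)\,\rd x-\sigma_d\Big(\frac{2F(R)}{R^d}\int_0^R rF(r)\,\rd r+\frac{4F(R)^2}{(d^2-4)R^{d-2}}\Big).
\]
Hence your ball constant is at most $c_dC_*$, and equals it by concentration. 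Also, the exponent in your formula for $M_c$ must be $d/2$, not $d/(d-2)$.

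\textbf{The bound $\mu_M^s>-\infty$.} The paper proves this by a full blow-up analysis of $\mathcal{S}_{M,rad}$: rescaling, Gidas--Spruck, Wang--Ye, and sup+inf/Harnack estimates, showing that $\mathcal{L}\to-\infty$ along stationary states forces $M=M_c$. Your Pohozaev idea exploits the $\lambda^{d-2}$-homogeneity of the critical energy under mass-preserving dilations and bypasses all of this. Integrate $x\cdot\nabla\big(u\phi'(u)-\phi(u)\big)=u\,x\cdot\nabla v$ over $\mathbb{B}_R$, using $-\Delta v=u-a$, $\nabla v=0$ on $\partial\mathbb{B}_R$, $\langle v\rangle=0$, $d(m-1)=d-2$ and \eqref{n4}. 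For $(u,u)\in\mathcal{S}_{M,rad}$ this gives
\[
(d-2)\,\mathcal{L}(u,u)=d\int_{\mathbb{B}_R}\big(m\phi(u)-u\phi'(u)\big)\,\rd x-dM\,v(R)+\sigma_dR^d\big(u\phi'(u)-\phi(u)\big)(R).
\]
The corrections to your sketch are these.
\begin{itemize}
\item $\mathcal{L}$ is not given by boundary terms alone. The bulk term comes from the non-homogeneity of $\phi$; it is bounded below by $-d\phi''(1)M$ thanks to \eqref{L20a}.
\item The first boundary term involves the value $v(R)$, which Lemma~\ref{LL1} bounds. Your bound on $|\nabla v|(R)$ is beside the point, since $\nabla v$ vanishes on $\partial\mathbb{B}_R$.
\item The second boundary term, $u\phi'(u)-\phi(u)$ at $r=R$, is bounded below because $z\phi'(z)\ge -m2^{m-1}/e$ and $u(R)\le a+a^2R^2/(d^2m)$ by \eqref{n6}. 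Radial monotonicity of steady states, which you invoke, is neither justified nor needed.
\item Keep the identity in this form. Eliminating the bulk term with \eqref{n3} instead produces $dM\phi'(u(R))$, which would need a lower bound on $u(R)$ that is not available a priori.
\end{itemize}

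\textbf{What each route buys.} Repaired, your argument gives $\inf_{\mathcal{S}_{M,rad}}\mathcal{L}>-\infty$ for every $M>0$, with an explicit bound and without using $M\neq M_c$. The paper's longer analysis gives more structure: concentration at the origin with the rescaled profile $\zeta$, and quantisation of the concentrating mass. It also follows the two-dimensional strategy, where the energy has no such homogeneity and Pohozaev does not control it.
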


\begin{rem}
For $\bar{M}\ge M>M_c$, the above defined set $\mathcal{F}_{M,\bar{M}}$ is non-empty by Proposition~\ref{prop1} below. This property ensures that $M_c$ indeed acts as a threshold.
\end{rem}

The critical mass $M_c$ is characterized as follows: Let 
\begin{equation}
	E_d(x) :=c_d |x|^{2-d}\,, \quad x\in\mathbb{R}^d\setminus\{0\}\,, \qquad c_d:=\frac{1}{(d-2)\sigma_d}\,, \label{Np}
\end{equation}
be the Newton potential, where $\sigma_d:=2\pi^{d/2}/\Gamma(d/2)$ is the surface area of the unit sphere~$\mathbb{S}^{d-1}$. We then note from a variant of the  Hardy–Littlewood–Sobolev inequality \cite[Lemma~3.2]{BCL2009} that the constant
\begin{equation}\label{VHLS}
	C_*:=\frac{1}{c_d}\sup\left\{ \dfrac{\displaystyle\int_{\mathbb{R}^d}h(x) (E_d\ast h)(x)\,\rd x}{\|h\|_{L_1(\mathbb{R}^d)}^{2/d}\,\|h\|_{L_m(\mathbb{R}^d)}^m}\ :\, h\in L_1(\mathbb{R}^d)\cap L_m(\mathbb{R}^d)\,,\, h\not= 0 \right\}
\end{equation}
is finite and positive. We set
\begin{equation}\label{Mc}
	M_c:=\left[\frac{2}{(m-1)c_d C_*}\right]^{d/2}\in (0,\infty)\,,
\end{equation}
which is the critical mass for the quasilinear Patlak–Keller–Segel model with critical nonlinear diffusion \cite{BCL2009}
\begin{equation}
	\partial_t u = \mathrm{div}\big(\nabla u^{m} - u \nabla (E_d*u)\big) \quad \text{ in }\ (0,\infty)\times\mathbb{R}^d \,,\label{qsp}
\end{equation} 
with $m=2(d-1)/d$. An alternative characterization of $M_c$ stems from the existence of minimizers to~\eqref{VHLS}. In fact, 
\begin{subequations}\label{b0}
\begin{equation}\label{b2x}
	M_c = \int_{\mathbb{B}_1}\zeta^{1/(m-1)}(x)\,\rd x\,,
\end{equation}
 where $\zeta$ is the unique radially symmetric classical solution  to 
\begin{equation}\label{b1}
	-\Delta \zeta = \frac{m-1}{m} \zeta^{1/(m-1)}\quad \text{ in }\ \mathbb{B}_1\,,\qquad \zeta=0\quad \text{ on }\ \partial\mathbb{B}_1\,.
\end{equation}
\end{subequations}
Additional properties of $\zeta$ are collected in Appendix~\ref{sec.apB}.

\begin{rem}
   An interesting consequence of Theorem~\ref{THM1} is that the critical mass~$M_c$ obtained therein for~\eqref{E} is the same as that identified for the degenerate quasilinear Patlak-Keller-Segel equation~\eqref{qsp} and is thus not modified by the non-degenerate diffusion in~\eqref{E}. Apparently, the critical mass is only determined by the behavior of the diffusion at infinity. 
\end{rem}

\medskip

\paragraph{\textbf{Outline}} As already mentioned, and in contrast to the approach used in \cite[Corollary~1.5]{FLT2023}, the proof of Theorem~\ref{THM1} relies on the availability of the Lyapunov functional $\mathcal{L}$, as reported in Proposition~\ref{PP2}, and on the strategy developed in \cite{Ho2002,HW2001} for the case $d=2$ and $m=1$. More specifically, given $\bar{M}\ge M>0$, we show that, if $M\in (0,M_c)$, then
\begin{equation}
    \inf_{\mathcal{S}_{M,rad}} \mathcal{L} = \inf_{\mathcal{I}_{M,\bar{M},rad}(\mathbb{B}_R)} \mathcal{L}>-\infty\,, \label{O2}
\end{equation}
while, if $M>M_c$, then
\begin{equation}
    \mu_M^s := \inf_{\mathcal{S}_{M,rad}} \mathcal{L} > \inf_{\mathcal{I}_{M,\bar{M},rad}(\mathbb{B}_R)} \mathcal{L} = -\infty\,, \label{O1}
\end{equation}
where $\mathcal{S}_{M,rad}$ denotes the set of radially symmetric stationary solutions to~\eqref{E} in $\mathcal{I}_{M,\bar{M},rad}(\mathbb{B}_R)$, see~\eqref{Erad} for a precise definition. 

\smallskip

When $M\in (0,M_c)$, the proof of~\eqref{O2} actually provides a control on $\|\nabla v\|_2$ in terms of $\mathcal{L}\big(u^0,w^0\big)$, from which a uniform estimate in $L_m(\mathbb{B}_R)$ on $u$ results, and Theorem~\ref{THM1}~\textbf{(I)} follows with the help of Theorem~\ref{T1:Ex}. Next, once~\eqref{O1} is established for $M>M_c$, we take an arbitrary initial value $(u^0,w^0)\in \mathcal{I}_{M,\bar{M},rad}(\mathbb{B}_R)$ with $\mathcal{L}\big(u^0,w^0\big)<\mu_M^s$. Assuming that the corresponding solution to~\eqref{E} is bounded in $L_\infty\big(\mathbb{B}_R,\mathbb{R}^2\big)$ leads to a contradiction, in view of Proposition~\ref{PP2} and LaSalle's invariance principle which imply stabilization to stationary states. This argument completes the proof of Theorem~\ref{THM1}~\textbf{(II)}.  

Therefore, the cornerstones of the proof of Theorem~\ref{THM1} are the properties~\eqref{O2} and~\eqref{O1}. On the one hand, the radially symmetric setting ensures that the solution $v$ to~\eqref{E2} has constant boundary values $V:=v(x)$, $x\in\partial\mathbb{B}_R$, so that $v-V$ belongs to $H_0^1(\mathbb{B}_R)$ and thus can be extended by zero to $H^1(\mathbb{R}^d)$. This feature enables us to compare $\mathcal{L}$ with a similar functional defined on $\mathbb{R}^d$, for which the variant~\eqref{VHLS} of the Hardy-Littlewood-Sobolev inequality provides a lower bound. On the other hand, fine elliptic estimates in the spirit of \cite{LS1994,WY2003} allow us to discard the existence of an unbounded sequence in $\mathcal{S}_{M,rad}$ provided $M\ne M_c$, from which we deduce~\eqref{O1}.  

\medskip

In Section~\ref{sec.2}, we study the functional $\mathcal{L}$, along with its infinum on $\mathcal{I}_{M,\bar{M},rad}(\mathbb{B}_R)$, as stated in~\eqref{O2} and~\eqref{O1}. Section~\ref{sec.3} is devoted to the proof of Theorem~\ref{THM1}~\textbf{(I)}. Properties of $\mathcal{L}$ on the set of stationary solutions $\mathcal{S}_{M,rad}$ are investigated in Section~\ref{sec.4} for $M\ne M_c$, eventually completing the proof of~\eqref{O1} and of Theorem~\ref{THM1}~\textbf{(II)}. 

\medskip

\paragraph{\textbf{Notation}} We slightly abuse notation by writing $f(x)$, $x\in \bar{\mathbb{B}}_R$, and $f(r)$, $r = |x|\in [0,R]$, simultaneously for radially symmetric functions $f$ defined on the closed ball $\bar{\mathbb{B}}_R$; i.e. we identify radially symmetric functions with their profiles. Also, for further use, we recall that
\begin{equation}
	\int_0^R r^{d-1} z(r)\,\rd z = \frac{\|z\|_1}{\sigma_d} = \frac{R^d}{d} \langle z\rangle\,, \qquad z\in L_{1,rad}^+(\mathbb{B}_R)\,. \label{eqi}
\end{equation}
Finally, from now on, the parameter $p\in (2d,\infty)$ is fixed.

\section{Well-posedness}\label{sec.wp}

We briefly sketch the well-posedness of~\eqref{E} and refer to a forthcoming paper for a detailed proof of a more general result. We additionally provide compactness properties of bounded solutions.

\begin{proof}[Proof of Theorem~\ref{T1:Ex}]
Replacing $v$ in~\eqref{E1} by $v(w):=\mathcal{K}[w-\langle w\rangle]$, the system~\eqref{E} can be recast as a quasilinear Cauchy problem of the form
\begin{equation*}
	\frac{\mathrm{dz}}{\mathrm{d}t}=A(z)z\,,\quad t>0\,,\qquad z(0)=z^0\,,
\end{equation*} 
for $z=(u,w)$ in $W_{p}^1(\Omega)\times L_\infty(\Omega)$, to which the quasilinear theory outlined in~\cite{Am1993} applies to ensure the local existence of a unique (weak) solution. A bootstrapping argument (as in \cite[$\S$~14]{Am1993}) can be used to improve the regularity, while H\"older regularity results for parabolic equations \cite{Am1989, LSU1968} entail that the solution is global, provided that it is bounded on finite time intervals. The latter is in fact implied by the estimates in \cite[Lemma~3.4]{FLT2023}, which also yield that $u$ and $w$ both belong to $L_\infty((0,\infty)\times\Omega)$ if $u\in L_\infty((0,\infty),L_m(\Omega))$. The radial symmetry is a direct consequence of the uniqueness.
\end{proof}

We supplement Theorem~\ref{T1:Ex} with compactness properties of bounded trajectories. Here again, we just sketch the proof and refer to the above mentioned forthcoming paper for details, the main building block of the proof being smoothing properties of parabolic equations as in \cite[15.5~Theorem]{Am1993}.

\begin{lem}\label{Wp}
Consider an initial value $(u^0,w^0)$ satisfying \eqref{a1} and the corresponding solution $(u,w)=\boldsymbol{\Psi}\big(u^0,w^0\big)$ to~\eqref{E} from Theorem~\ref{T1:Ex}. If $u\in L_\infty((0,\infty),L_m(\Omega))$, then $(u,w)([0,\infty))$ is relatively compact in $W_p^1(\Omega)\times L_\infty(\Omega)$.
\end{lem}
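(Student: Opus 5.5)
By Theorem~\ref{T1:Ex}, the assumption $u\in L_\infty((0,\infty),L_m(\Omega))$ gives $K_0:=\sup_{t\ge0}\big(\|u(t)\|_\infty+\|w(t)\|_\infty\big)<\infty$. The plan is to upgrade this bound in two stages. First we show that $u$ is uniformly bounded in a space compactly embedded in $W_p^1(\Omega)$ on $[1,\infty)$; on the compact interval $[0,1]$ compactness follows from $u\in C([0,\infty),W_p^1(\Omega))$. Second, we handle $w$ through the variation-of-constants formula for the linear ODE~\eqref{E3}.

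\textbf{Step 1.} Since $\|w(t)\|_\infty\le K_0$, elliptic $W_q^2$-regularity of $\mathcal{K}$ applied to~\eqref{E2} bounds $v$ in $W_q^2(\Omega)$ for every $q<\infty$, uniformly in $t$. Hence $\nabla v$ is bounded in $C^{\theta}(\bar\Omega)$. Next we view~\eqref{E1} as a uniformly parabolic equation in divergence form. Its diffusion coefficient $m(1+u)^{m-1}$ lies in $[m,m(1+K_0)^{m-1}]$, and its drift $u\nabla v$ is bounded. Parabolic De Giorgi--Nash/Hölder estimates \cite{LSU1968}, applied on the windows $[s,s+2]$ with $s\ge0$, give a uniform bound for $u$ in $C^{\alpha,\alpha/2}(\bar\Omega\times[s+1,s+2])$. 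Consequently the diffusion coefficient $a(t):=m(1+u(t))^{m-1}$ is uniformly Hölder continuous in space and time. We then freeze $a$ and treat $\partial_t u=\mathrm{div}(a\nabla u)-\mathrm{div}(u\nabla v)$ as a linear nonautonomous problem in $L_p$ with Hölder-in-time generators, following \cite[15.5~Theorem]{Am1993} or the Acquistapace--Terreni theory. The smoothing property on unit windows then yields a uniform bound
\[
\sup_{t\ge1}\|u(t)\|_{W_p^{1+2\varepsilon}(\Omega)}<\infty\quad\text{for some }\varepsilon>0 .
\]
By Rellich, $u([1,\infty))$ is relatively compact in $W_p^1(\Omega)$.

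\textbf{Step 2.} For $w$ we write
\[
w(t)=e^{-t}w^0+\int_0^t e^{-(t-s)}u(s)\,\rd s .
\]
The first term lies in the compact set $\{\lambda w^0:\lambda\in[0,1]\}$ of $L_\infty(\Omega)$. For the integral term, Step~1 and the embedding $W_p^1\hookrightarrow C^{1-d/p}(\bar\Omega)$ (recall $p>2d$) show that $u$ is bounded in $C^{\beta}(\bar\Omega)$ uniformly for $s\ge1$. On $[0,1]$ the same holds by continuity of $u$ into $W_p^1(\Omega)$. Hence the integral is bounded in $C^\beta(\bar\Omega)$ uniformly in $t$, and so it lies in a compact subset of $C(\bar\Omega)\subset L_\infty(\Omega)$ by Arzel\`a--Ascoli. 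The set $w([0,\infty))$ is contained in the sum of two compact sets, hence it is relatively compact in $L_\infty(\Omega)$.

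\textbf{Main obstacle.} The delicate point is Step~1: obtaining estimates that are \emph{uniform in time} and of order strictly above $W_p^1$ for a quasilinear equation whose coefficients depend on $u$ itself. I would proceed in two moves. The Hölder bound first turns the problem into a linear one with Hölder-continuous coefficients, uniformly on unit time windows. Then, on each window $[s,s+2]$, I would apply a time cutoff and use maximal regularity (or the evolution-operator smoothing estimates of \cite{Am1993}), taking care that all constants depend only on $K_0$ and on the Hölder norms. The variable $w$ enters $v$ only through $\mathcal{K}$, so the lack of smoothing in~\eqref{E3} causes no difficulty here.
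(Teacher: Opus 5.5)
Your proposal is correct and follows essentially the paper's route: uniform parabolic Hölder bounds on $u$, then smoothing estimates for the evolution operator of the linearized nonautonomous problem to bound $u$ uniformly in a space compactly embedded in $W_p^1(\Omega)$, and the variation-of-constants formula for $w$ with a uniform Hölder bound on the integral term. The differences are only technical. The paper works in the scale $W_{p,N}^{2\alpha}(\Omega)\to W_{p,N}^{2\alpha-2}(\Omega)$ and explicitly uses the time-Hölder continuity of $v$ in $W_q^2(\Omega)$ inherited from \eqref{E3}, a point you leave implicit. It also gets uniformity in time from an exponential shift $A(t)-\nu_0$ rather than from unit time windows.
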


\begin{proof}
Owing to~\eqref{regularityA} we may assume without loss of generality that $u^0\in W_{p,N}^2(\Omega)$.  If $u\in L_\infty((0,\infty),L_m(\Omega))$, then $u\in L_\infty((0,\infty)\times \Omega)$ by Theorem~\ref{T1:Ex}. In fact, part of the outcome of the proof of Theorem~\ref{T1:Ex} is the existence of $\eta\in (0,1)$ such that
\begin{equation}\label{uu1}
    u\in BUC^\eta([0,\infty)\times\bar{\Omega})\,,
\end{equation} 
which is actually obtained from H\"older regularity results for parabolic equations, see \cite[Section~5]{Am1989} and~\cite[Chapter~II, Sections~7-8]{LSU1968} for instance. It then readily follows from~\eqref{E3} that $w\in BUC^\eta([0,\infty),L_q(\Omega))$ for any $q\in (1,\infty)$ and thus from~\eqref{E2} that 
\begin{equation}\label{uu2}
    v\in BUC^\eta([0,\infty),W_q^2(\Omega))\,,\quad q\in (1,\infty)\,.
\end{equation} 
Consequently, fixing $2\alpha\in (1,\min\{1+1/p,1+\eta\})$ and setting
\begin{equation*}
    A(t)y:= \mathrm{div}\big(m(1+u(t))^{m-1}\nabla y\big)-\nabla v(t)\cdot\nabla y-\Delta v(t) y\,,
\end{equation*}
we deduce from \cite[Theorem~2.1]{Am1989} (see also \cite{AmE90}) and~\eqref{uu1}-\eqref{uu2} that there exist $\kappa\ge 1$, $\omega>0$ and $\rho>0$ such that
\begin{equation}\label{uu3}
    A(t)\in \mathcal{H}\big(W_{p,N}^{2\alpha}(\Omega),W_{p,N}^{2\alpha-2}(\Omega);\kappa,\omega)\big)\,,\quad t\ge 0\,,
\end{equation}
and
\begin{equation}\label{uu4}
    A\in BUC^\rho\big([0,\infty),\mathcal{L}\big(W_{p,N}^{2\alpha}(\Omega),W_{p,N}^{2\alpha-2}(\Omega)\big)\big)\,.
\end{equation}
Properties~\eqref{uu3} and~\eqref{uu4} allow us to apply the stability estimates from~\cite[Section~II.5]{LQPP} for the evolution operator $U_A$ generated by $A$. We thus infer that there are $c>0$ and $\nu\in \R$ such that
\begin{equation}\label{uu5}
    \|U_A(t,s)\|_{\mathcal{L}(W_{p,N}^{2\alpha-2\delta}(\Omega))}+(t-s)^{\alpha-\delta}\|U_A(t,s)\|_{\mathcal{L}(L_p(\Omega),W_{p,N}^{2\alpha-2\delta}(\Omega))}\le c e^{\nu(t-s)}
\end{equation}
for $t\ge s\ge 0$, where $\delta>0$ is chosen such that $1<2\alpha-2\delta<2\alpha$. Choosing $\nu_0>\nu$ and noticing that $u$ solves
\begin{equation*}
    \frac{\rd u}{\rd t}(t) =\big(A(t)-\nu_0\big)u(t)+\nu_0 u(t)\,,\quad t>0\,,\qquad u(0)=u^0\in W_{p,N}^2(\Omega)\,,
\end{equation*}
it  readily follows from~\eqref{uu5} that $u([0,\infty))$ is bounded in  $W_{p,N}^{2\alpha-2\delta}(\Omega)$, and hence relatively compact in $W_{p}^{1}(\Omega)$. Finally, the H\"older continuity~\eqref{uu1} of $u$ implies that the mapping
\begin{equation*}
    W:\left[t\mapsto \int_0^t e^{-(t-s)}u(s)\,\rd s\right]
\end{equation*}
is bounded in $C^\eta(\bar{\Omega})$, hence $W([0,\infty))$ is relatively compact in $C(\bar{\Omega})$. Since 
\begin{equation*}
    w(t)=e^{-t}w^0+W(t)\,,\quad t\ge 0\,,
\end{equation*} 
we conclude that $w([0,\infty))$ is relatively compact in $L_\infty(\Omega)$ as claimed.
\end{proof}

\section{A Lyapunov functional}\label{sec.2}

In this section, we focus on the functional $\mathcal{L}$ defined in~\eqref{FL}. We first prove that it is indeed a Lyapunov functional for~\eqref{E}, as stated in Proposition~\ref{PP2}, and then investigate its boundedness or unboundedness from below. We begin by establishing the continuity of $\mathcal{L}$ on $L_m^+(\Omega)\times  L_p^+(\Omega)$.

\begin{lem}\label{lem.contL} 
Let $p\in (2d,\infty)$. Then $\mathcal{L}$ belongs to $C\big(L_m^+(\Omega)\times  L_p^+(\Omega)\big)$. 
\end{lem}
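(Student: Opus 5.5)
We split $\mathcal{L}(u,w)$ into the three terms
\begin{equation*}
\mathcal{L}_1(u):=\int_\Omega \phi(u)\,\rd x\,,\qquad \mathcal{L}_2(u,w):=-\int_\Omega u\,\mathcal{K}[w-\langle w\rangle]\,\rd x\,,\qquad \mathcal{L}_3(w):=\frac12\int_\Omega |\nabla \mathcal{K}[w-\langle w\rangle]|^2\,\rd x\,,
\end{equation*}
and show that each is continuous on $L_m^+(\Omega)\times L_p^+(\Omega)$. Taking a sequence $(u_n,w_n)\to(u,w)$ in $L_m(\Omega)\times L_p(\Omega)$, it suffices to prove $\mathcal{L}_j(u_n,w_n)\to\mathcal{L}_j(u,w)$ for $j=1,2,3$.

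For $\mathcal{L}_2$ and $\mathcal{L}_3$ we use elliptic regularity. The map $w\mapsto w-\langle w\rangle$ is bounded linear on $L_p(\Omega)$, and by \eqref{opK} the operator $\mathcal{K}$ maps it continuously into $W_{p,N}^2(\Omega)$. Since $p>2d>d/2$, the embedding $W_p^2(\Omega)\hookrightarrow L_\infty(\Omega)\cap W_2^1(\Omega)$ gives that $v_n:=\mathcal{K}[w_n-\langle w_n\rangle]\to v$ in $L_\infty(\Omega)$ and in $W_2^1(\Omega)$. Hence $\mathcal{L}_3(w_n)\to\mathcal{L}_3(w)$. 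For $\mathcal{L}_2$ we write
\begin{equation*}
\int_\Omega (u_nv_n-uv)\,\rd x=\int_\Omega (u_n-u)v_n\,\rd x+\int_\Omega u(v_n-v)\,\rd x
\end{equation*}
and bound the two pieces by $\|u_n-u\|_1\|v_n\|_\infty+\|u\|_1\|v_n-v\|_\infty\to0$, using that $L_m(\Omega)\hookrightarrow L_1(\Omega)$ on the bounded domain $\Omega$.

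The main step is $\mathcal{L}_1$. We first derive growth bounds on $\phi$ from \eqref{phi}. Near $z=0$, $\phi''(z)\sim m/z$, so $\phi$ behaves like $m z\ln z$ plus affine terms and is bounded on $[0,1]$ after extending it continuously to $z=0$. For large $z$, $\phi''(z)\le C z^{m-2}$, which gives $0\le\phi(z)\le C(1+z^m)$ for all $z\ge0$ since $\phi$ is convex with minimum $0$ at $z=1$. Then $\phi$ is continuous on $[0,\infty)$ with $|\phi(z)|\le C(1+z^m)$. Given $u_n\to u$ in $L_m$, we pass to a subsequence with $u_n\to u$ a.e. and $|u_n|^m\le g$ for some $g\in L_1$ (partial converse of dominated convergence). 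Dominated convergence yields $\mathcal{L}_1(u_n)\to\mathcal{L}_1(u)$ along that subsequence, and the usual subsequence argument gives convergence of the whole sequence. Alternatively, the Vitali convergence theorem applies directly, since $\{|u_n|^m\}$ is uniformly integrable.

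The only delicate points are the behavior of $\phi$ at $0$, where $\phi'$ is singular but $\phi$ stays bounded, and the condition $p$ large enough to get $v\in L_\infty$. Both are handled above.
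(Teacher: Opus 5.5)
Your proposal is correct and follows essentially the paper's route: a growth bound $0\le\phi(s)\le C(1+s^m)$ (the paper cites \eqref{eq.A1}) combined with dominated convergence for $\int_\Omega\phi(u)\,\rd x$, and the boundedness of $\mathcal{K}$ from $L_p$ into $W_p^2$ together with Sobolev embeddings for the other two terms. The only cosmetic difference is that you estimate the coupling term $\int_\Omega uv\,\rd x$ by $L_1$--$L_\infty$ duality (via $W_p^2(\Omega)\hookrightarrow L_\infty(\Omega)$), whereas the paper uses $L_m$--$L_{m/(m-1)}$ duality.
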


\begin{proof}
The bound~\eqref{eq.A1} and the Lebesgue dominated convergence theorem entail that
\begin{equation}
   [u\mapsto \phi(u)]\in C\big(L_m^+(\Omega),L_1(\Omega)\big)\,. \label{A}
\end{equation}
Moreover, we infer from~\eqref{opK} and the continuous embedding of $W_p^2(\Omega)$ into $W_2^1(\Omega)$ (since $p\ge 2d/(2+d)$) that
\begin{equation}
   \big[w\mapsto \mathcal{K}[w-\langle w\rangle]\big]\in \mathcal{L}\big(L_p(\Omega),W_p^2(\Omega)\big)\subset \mathcal{L}\big(L_p(\Omega),W_2^1(\Omega)\big)\,, \label{B}
\end{equation}
which, together with the continuous embedding of $W_p^2(\Omega)$ into  $L_{m/(m-1)}(\Omega)$, implies that
\begin{equation}
   \big[(u,w)\mapsto u\mathcal{K}[w-\langle w\rangle]\big]\in C\big(L_m(\Omega)\times L_p(\Omega),L_1(\Omega)\big)\,. \label{C}
\end{equation}
Recalling~\eqref{FL}, it follows from \eqref{A}-\eqref{C} that $\mathcal{L}$ belongs to $C\big(L_m^+(\Omega)\times  L_p^+(\Omega)\big)$ as claimed.
\end{proof}

\begin{proof}[Proof of Proposition~\ref{PP2}]
To cope with the singularity of $\phi'(r)$ as $r\to 0^+$ we consider $u_\ve^0:=u^0+\ve$ with
 $\ve>0$. Let $(u_\ve,w_\ve)$ be the corresponding global solution to~\eqref{E} on $[0,\infty)$ with initial value $\big(u_\ve^0,w^0\big)$ and  $v_\ve = \mathcal{K}[w_\varepsilon - \langle w_\varepsilon\rangle]$. Fix $T>0$ and set  $\gamma_\ve(T) :=\|\Delta v_\ve\|_{L_\infty((0,T)\times\Omega)}$, which is well-defined due to~\eqref{E2} and~\eqref{regularityB} Then, $u_\ve$ satisfies
\begin{align*}
	\partial_tu_\ve & = \big(\mathcal{A}_\ve(t) - \gamma_\ve(T) \big)u_\ve+\big(\gamma_\ve(T) - \Delta v_\ve(t)\big) u_\ve \ge \big(\mathcal{A}_\ve(t)-\gamma_\ve(T)\big)u_\ve\,, \\
	u_\ve(0) & = u_\ve^0\ge \ve\,,
\end{align*}
with
\begin{equation*}
	\mathcal{A}_\ve(t)z:=\mathrm{div}\big(m(1+u_\ve(t))^{m-1}\nabla z\big)-\nabla v_\ve(t)\cdot\nabla z
\end{equation*}
and is subject to Neumann boundary conditions. By the comparison principle, 
\begin{equation*}
	u_\ve(t)\ge e^{-\gamma_\ve(T) t}\ve>0\,,\quad (t,x)\in [0,T]\times \bar\Omega\,.
\end{equation*}
Since $p>2>m$, it follows from~\eqref{regularityA} and~\eqref{phi} that 
\begin{equation*}
	\phi(u_\ve)\in C^1 \big((0,T],L_1(\Omega)\big)\,,\qquad \partial_t\phi(u_\ve)=\phi'(u_\ve)\partial_tu_\ve\,.
\end{equation*}
Moreover, $W_p^{1}(\Omega)$ is continuously embedded in $C(\bar{\Omega})$ and we infer from \eqref{regularityA}-\eqref{regularityB} that 
\begin{equation}\label{i1}
	\mathcal{L}(u_\ve,w_\ve)\in C^1((0,\infty))\cap C([0,\infty))\,,\qquad \mathcal{D}(u_\ve,w_\ve)\in  C([0,\infty))\,.
\end{equation}
Since $v_\ve=\mathcal{K}[w_\ve-\langle w_\ve\rangle]\in C^1([0,\infty),W_{p,N}^2(\Omega))$ with
\begin{equation*}
	\int_\Omega\partial_t v_\ve\,\rd x=0\,,
\end{equation*}
we derive from~\eqref{E}, \eqref{phi} and~\eqref{i1q} that, for $t\in (0,T]$,
\begin{align*}
	\frac{\rd}{\rd t}\mathcal{L}(u_\ve,w_\ve)&= \int_\Omega\big(\phi'( u_\ve)- v_\ve\big)\partial_t u_\ve\,\rd x-\int_\Omega u_\ve\partial_t v_\ve\,\rd x+\int_\Omega \nabla v_\ve\cdot\nabla\partial_t v_\ve\,\rd x\\
	&= -\int_\Omega u_\ve \big\vert\nabla\big(\phi'( u_\ve)- v_\ve\big)\big\vert^2\,\rd x-\int_\Omega \big(u_\ve-\langle u_\ve\rangle\big)\partial_t v_\ve\,\rd x-\int_\Omega \Delta v_\ve\partial_t v_\ve\,\rd x\\
	&= -\int_\Omega u_\ve \big\vert\nabla\big(\phi'( u_\ve)- v_\ve\big)\big\vert^2\,\rd x-\int_\Omega \big(u_\ve-w_\ve - \langle u_\ve -w_\ve \rangle\big) \partial_t v_\ve\,\rd x\,.
\end{align*}
Using the identities (see ~\eqref{E3})
\begin{align*}
	-\Delta\mathcal{K}\big[u_\ve-w_\ve - \langle u_\ve -w_\ve \rangle\big] & = u_\ve-w_\ve - \langle u_\ve -w_\ve \rangle\,, \\
	\partial_t v_\ve & = \mathcal{K}[ u_\ve-w_\ve-\langle u_\ve-w_\ve\rangle]\,, 
\end{align*}
along with the divergence theorem for the last term, we find that, for $t\in (0,T]$, 
\begin{align*}
	\frac{\rd}{\rd t}\mathcal{L}(u_\ve,w_\ve)& = -\int_\Omega u_\ve \big\vert\nabla\big(\phi'( u_\ve)- v_\ve\big)\big\vert^2\,\rd x-\int_\Omega \big\vert\nabla \mathcal{K}\big[u_\ve - w_\ve- \langle u_\ve - w_\ve\rangle\big]\big\vert^2\,\rd x\\
&=-\mathcal{D}(u_\ve,w_\ve)\,.
\end{align*}
Therefore,
\begin{equation*}
	\mathcal{L}(u_\ve,w_\ve)(t) + \int_s^t \mathcal{D}(u_\ve,w_\ve)(\tau)\,\rd\tau = \mathcal{L}(u_\ve,w_\ve)(s)
\end{equation*}
for $0<s<t\le T$. Since $ \mathcal{D}(u_\ve,w_\ve)$ and $\mathcal{L}(u_\ve,w_\ve)$ both belong to  $C([0,\infty))$ by~\eqref{i1}, we may let $s\to 0$ to obtain
\begin{equation*}
	\mathcal{L}(u_\ve,w_\ve)(t) + \int_0^t \mathcal{D}(u_\ve,w_\ve)(\tau)\,\rd\tau = \mathcal{L}\big(u_\ve^0,w^0\big)
\end{equation*}
for $t\in (0,T]$. Fatou's lemma and the continuous dependence of the solution to~\eqref{E} on the initial value stated in Theorem~\ref{T1:Ex}  ensure that
\begin{align*}
	\int_0^t \mathcal{D}(u,w)(\tau)\,\rd\tau & \le  \liminf_{\ve\to 0} \int_0^t \mathcal{D}(u_\ve,w_\ve)(\tau)\,\rd\tau\\
	&=\liminf_{\ve\to 0} \left( \mathcal{L}\big(u_\ve^0,w^0\big) - \mathcal{L}(u_\ve,w_\ve)(t) \right) = \mathcal{L}\big(u^0,w^0\big) - \mathcal{L}(u,w)(t)
\end{align*}
for $t\ge 0$ (since $T>0$ was arbitrary). This proves Proposition~\ref{PP2}. 
\end{proof}

We next turn to lower bounds on $\mathcal{L}$ on the set $\mathcal{I}_{M,\bar{M},rad}(\mathbb{B}_R)$ according to the range of $(M,\bar{M})$. To this end, we take advantage of radial symmetry, which guarantees that the solution $v$ to~\eqref{E2} supplemented with the homogeneous Neumann boundary conditions~\eqref{E4} is also a solution to~\eqref{E2} supplemented with nonhomogeneous \textit{constant} Dirichlet boundary conditions. In this direction, we first report the following estimate.

\begin{lem}\label{LL1}
Let $w\in L_{1,rad}^+(\mathbb{B}_R)$. Then $v=\mathcal{K}[w-\langle w\rangle]$ is radially symmetric and  $|v(R)| \le c_0 \langle w\rangle$ with $c_0=c_0(d,R)>0$.
\end{lem}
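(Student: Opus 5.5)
Radial symmetry of $v$ follows from uniqueness in~\eqref{opKb}: for any rotation $Q\in O(d)$, the function $v\circ Q$ solves the same Neumann problem with zero mean, since $w\circ Q=w$ and the Laplacian and the boundary condition commute with rotations. Hence $v\circ Q=v$.

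To estimate $v(R)$, I would integrate the radial form of the equation explicitly. Writing $f:=w-\langle w\rangle$, the equation $-(r^{d-1}v')'=r^{d-1}f$ with $v'(0)=0$ gives
\begin{equation*}
	v'(r)=-r^{1-d}\int_0^r s^{d-1}f(s)\,\rd s\,, \qquad 0<r\le R\,,
\end{equation*}
and $v'(R)=0$ is consistent with $\langle f\rangle=0$. (For $w$ merely in $L_1$, one can either approximate by smooth radial $w$ or note that this formula defines the distributional solution, since $v\in W^{1,1}$ by elliptic $L_1$ theory.) Using~\eqref{eqi},
\begin{equation*}
	\int_0^r s^{d-1}f(s)\,\rd s=\int_0^r s^{d-1}w(s)\,\rd s-\frac{r^d}{d}\langle w\rangle\,,
\end{equation*}
so, by the non-negativity of $w$,
\begin{equation*}
	|v'(r)|\le r^{1-d}\Big(\frac{R^d}{d}\langle w\rangle+\frac{r^d}{d}\langle w\rangle\Big)\le \frac{2R^d}{d}\,r^{1-d}\langle w\rangle\,.
\end{equation*}
This bound is not integrable at $0$ when $d\ge3$, so a direct bound on $v(R)-v(0)$ is out of reach. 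Instead I would use the mean-zero condition. Since $\langle v\rangle=0$,
\begin{equation*}
	0=\int_0^R r^{d-1}v(r)\,\rd r\,, \qquad\text{hence}\qquad \frac{R^d}{d}\,v(R)=\int_0^R r^{d-1}\big(v(R)-v(r)\big)\,\rd r=\int_0^R r^{d-1}\int_r^R v'(s)\,\rd s\,\rd r\,.
\end{equation*}
Applying Fubini turns this into
\begin{equation*}
	\frac{R^d}{d}\,v(R)=\int_0^R v'(s)\,\frac{s^d}{d}\,\rd s\,,
\end{equation*}
and then
\begin{equation*}
	|v(R)|\le \frac{d}{R^d}\int_0^R \frac{2R^d}{d}\,s^{1-d}\,\frac{s^d}{d}\,\langle w\rangle\,\rd s=\frac{R^2}{d}\,\langle w\rangle\,.
\end{equation*}
This gives $c_0=R^2/d$; the precise value is immaterial.

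The main obstacle is the singular weight $r^{1-d}$. The Fubini step is what removes it, by producing the factor $s^d$; a naive pointwise bound through $v(0)$ would diverge. Justifying the radial ODE representation for $w$ only in $L_1$ is routine, either by density or by the explicit formula, and the argument relies on the non-negativity of $w$ only through the triangle inequality used above.
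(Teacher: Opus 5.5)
Your proposal is correct and follows essentially the paper's route: rotational invariance plus uniqueness gives radial symmetry, and then the radial equation is integrated once, the zero-mean condition $\int_0^R r^{d-1}v(r)\,\mathrm{d}r=0$ is used together with Fubini to obtain a weight $s^d$ that cancels the singular factor $s^{1-d}$, and the result is bounded using the non-negativity of $w$ and~\eqref{eqi}. The only difference is that you bound $|v'|$ before integrating, getting $c_0=R^2/d$. The paper instead computes the exact identity $v(R)=\frac{\langle w\rangle R^2}{d(d+2)}-\frac{1}{2R^d}\int_0^R s^{d-1}(R^2-s^2)w(s)\,\mathrm{d}s$ and reads off the sharper two-sided bound $-\frac{R^2}{2(d+2)}\langle w\rangle\le v(R)\le\frac{R^2}{d(d+2)}\langle w\rangle$; the value of the constant is immaterial for the lemma.
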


\begin{proof}
The radial symmetry of $v$ is obvious due to the well-posedness of~\eqref{opKb} and the rotational invariance of the Laplace operator and the mean value. In radial variables, the equation~\eqref{opKb} reads
\begin{equation*}
	\partial_r\left(r^{d-1}\partial_rv(r)\right)=r^{d-1}\left( \langle w\rangle - w(r) \right)\,,\quad 0<r<R\,,
\end{equation*}
from which we derive, after integrating twice,
\begin{align*}
	v(R)-v(r)&=\frac{\langle w\rangle}{2d}\big(R^2-r^2\big)-\int_r^Rs^{1-d}\int_0^sz^{d-1} w(z)\,\rd z\rd s\\
	&=\frac{\langle w\rangle}{2d}\big(R^2-r^2\big)+\frac{R^{2-d}}{d-2}\int_0^R z^{d-1} w(z)\,\rd z\\
	&\quad - \frac{1}{d-2} \int_r^R z w(z)\rd z - \frac{r^{2-d}}{d-2} \int_0^r z^{d-1} w(z)\,\rd z\,.
\end{align*}
Therefore, multiplying by $r^{d-1}$, integrating with respect to $r\in (0,R)$, and recalling that
\begin{equation*}
	\int_0^R r^{d-1} v(r)\,\rd r=0
\end{equation*} 
yields
\begin{align*}
	\frac{R^d}{d}v(R) & = \frac{\langle w\rangle}{2d} R^{d+2} \left(\frac{1}{d}-\frac{1}{d+2}\right) + \frac{R^2}{d(d-2)} \int_0^Rz^{d-1} w(s)\,\rd s\\
	& \quad -\frac{1}{d-2} \int_0^R  s w(s) \int_0^s r^{d-1}\,\rd r\rd s - \frac{1}{d-2} \int_0^R s^{d-1} w(s) \int_s^R r\,\rd r\rd s\\
	& = \frac{\langle w\rangle}{2d} R^{d+2} \left(\frac{1}{d}-\frac{1}{d+2}\right) + \frac{R^2}{d-2} \left(\frac{1}{d}-\frac{1}{2}\right) \int_0^R s^{d-1} w(s)\,\rd s\\
	& \quad - \frac{1}{d-2} \left(\frac{1}{d}-\frac{1}{2}\right) \int_0^R s^{d+1} w(s)\,\rd s\,,
\end{align*}
from which we obtain that
\begin{equation*}
	v(R) = \frac{\langle w\rangle R^2}{d(d+2)} -\frac{1}{2R^d} \int_0^R s^{d-1} \left(R^2-s^2\right) w(s)\,\rd s\,.
\end{equation*}
Consequently, thanks to the non-negativity of $w$ and~\eqref{eqi}, we conclude that
\begin{equation*}
	-\frac{\langle w\rangle R^2 }{2(d+2)}\le v(R)\le\frac{\langle w\rangle R^2}{d(d+2)} \,.
\end{equation*}
hence the assertion.
\end{proof}
 
\begin{lem}\label{LLL2}
Let $(u,w)\in  L_{m,rad}^+(\mathbb{B}_R)\times  L_{p,rad}^+(\mathbb{B}_R)$ be such that $M=\|u\|_1\le M_c$ and set $v= \mathcal{K}[ w -\langle  w \rangle]$. Then
\begin{equation*}
	\mathcal{L}(u,w)\ge \frac{1}{2}\left(1-\left(\frac{M}{M_c}\right)^{2/d}\right) \|\nabla v\|_2^2 -M c_0\langle w\rangle -\frac{m M}{m-1}\,,
\end{equation*}
where $c_0>0$ is the constant from Lemma~\ref{LL1}.
\end{lem}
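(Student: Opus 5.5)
The plan is to pass to the whole space. By radial symmetry the function $v$ has a constant boundary value $V:=v(R)$, which Lemma~\ref{LL1} controls. The HLS variant~\eqref{VHLS} then handles the interaction term $\int u v$, and the constant $M_c$ in~\eqref{Mc} is calibrated so that the $\|u\|_m^m$ terms cancel exactly.

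\emph{Step 1: lower bound on $\phi$.} Set $\psi_0(z):=z^m/(m-1)$, so that $\psi_0''(z)=mz^{m-2}\le m(1+z)^{m-1}/z=\phi''(z)$. Hence $\phi-\psi_0$ is convex on $(0,\infty)$, and lies above its tangent at $z=1$:
\begin{equation*}
\phi(z)-\psi_0(z)\ge -\frac{1}{m-1}-\frac{m}{m-1}(z-1)=1-\frac{m z}{m-1}\ge -\frac{m z}{m-1}\,.
\end{equation*}
Integrating gives $\int_{\mathbb{B}_R}\phi(u)\,\rd x\ge \frac{1}{m-1}\|u\|_m^m-\frac{mM}{m-1}$.

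\emph{Step 2: splitting off the boundary value.} Write $\psi:=v-V\in H_0^1(\mathbb{B}_R)$ and extend both $\psi$ and $u$ by zero to $\mathbb{R}^d$. Since $\int w v\,\rd x=\|\nabla v\|_2^2=\|\nabla\psi\|_2^2$, formula~\eqref{i1q} gives
\begin{equation*}
\mathcal{L}(u,w)=\int\phi(u)\,\rd x-\int_{\mathbb{R}^d}u\psi\,\rd x-VM+\frac12\|\nabla\psi\|_2^2\,.
\end{equation*}
Lemma~\ref{LL1} yields $-VM\ge -Mc_0\langle w\rangle$.

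\emph{Step 3: the interaction term.} Let $N:=E_d\ast u$, so that $-\Delta N=u$ in $\mathbb{R}^d$. Then
\begin{equation*}
\int u\psi\,\rd x=\int\nabla N\cdot\nabla\psi\,\rd x\le \frac{\theta}{2}\|\nabla\psi\|_2^2+\frac{1}{2\theta}\int u\,(E_d\ast u)\,\rd x\,,
\end{equation*}
with $\theta:=(M/M_c)^{2/d}\in(0,1]$; the case $M=0$ is trivial. The identity $\int u\psi=\int\nabla N\cdot\nabla\psi$ and $\|\nabla N\|_2^2=\int u\,(E_d\ast u)$ need justification. For this I would argue with $\psi\in H^1$ of compact support and $u\in L_1\cap L_m$, approximating $u$ by smooth compactly supported functions, and using finiteness of $\int u\,(E_d\ast u)$ from~\eqref{VHLS}. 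This justification is the only delicate point.

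By~\eqref{VHLS},
\begin{equation*}
\frac{1}{2\theta}\int u\,(E_d\ast u)\,\rd x\le \frac{c_dC_*M^{2/d}}{2\theta}\|u\|_m^m=\frac{c_dC_*M_c^{2/d}}{2}\|u\|_m^m=\frac{1}{m-1}\|u\|_m^m\,,
\end{equation*}
where the last equality is exactly~\eqref{Mc}.

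\emph{Step 4: conclusion.} Combining Steps~1--3, the $\|u\|_m^m/(m-1)$ terms cancel exactly, and we are left with
\begin{equation*}
\mathcal{L}(u,w)\ge\frac12(1-\theta)\|\nabla v\|_2^2-Mc_0\langle w\rangle-\frac{mM}{m-1}\,,
\end{equation*}
which is the claim of Lemma~\ref{LLL2}.
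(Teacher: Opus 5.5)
Your proposal is correct and follows the paper's proof essentially step by step. You split off the constant boundary value $V=v(R)$ and control it with Lemma~\ref{LL1}, write $\int u(v-V)$ as $\int\nabla(E_d*\bar u)\cdot\nabla v$, apply~\eqref{VHLS}, and cancel the $\|u\|_m^m/(m-1)$ term against the lower bound~\eqref{eq.A2} on $\phi$, which the paper proves by the same convexity argument you give. The only cosmetic difference is that the paper uses Cauchy--Schwarz followed by Young's inequality, whereas you use the weighted Young inequality with $\theta=(M/M_c)^{2/d}$ directly, which is why you must treat $M=0$ separately.
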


\begin{proof}
We set $V:= v(R)$ and define the trivial extension $\bar{u}$ of $u$ to $\mathbb{R}^d$ by $\bar{u}(x)=u(x)$ for $x\in\mathbb{B}_R$ and $\bar{u}(x) = 0$ for $x\in\mathbb{R}^d\setminus \mathbb{B}_R$. Owing to the properties~\eqref{Np} of the Newton potential, 
\begin{align*}
	\int_{\mathbb{B}_R} u v\,\rd x & = \int_{\mathbb{B}_R} \bar{u} (v-V)\,\rd x+VM = -\int_{\mathbb{B}_R} (v-V)\,\Delta (E_d*\bar{u})\, \rd x + VM\,.
\end{align*}
Since $v-V=0$ on $\partial\mathbb{B}_R$ we obtain, using the divergence theorem,~\eqref{VHLS}, and~\eqref{Mc}, along with H\"older's and Young's inequalities, that
\begin{align}
	\int_{\mathbb{B}_R} u v\,\rd x & = \int_{\mathbb{B}_R} \nabla( E_d*\bar{u}) \cdot \nabla v\,\rd x+VM \le \|\nabla( E_d*\bar{u}) \|_{L_2(\mathbb{R}^d)}\,\|\nabla v\|_{2} +VM \nonumber\\
	& \le \left( \int_{\mathbb{R}^d} \bar{u} \big[E_d*\bar{u}\big] \right)^{1/2} \,\|\nabla v\|_{2} +VM \nonumber\\
	&\le \sqrt{C_* c_d}\, \|\bar{u}\|_{L_1(\R^d)}^{1/d}\, \|\bar{u}\|_{L_m(\R^d)}^{m/2}\,\|\nabla v\|_{2} +VM\nonumber\\
	&=\sqrt{\frac{2}{m-1}}\, \left(\frac{M}{M_c}\right)^{1/d}\, \|u\|_{m}^{m/2}\,\|\nabla v\|_{2} +VM\nonumber\\
	&\le \frac{1}{2}\, \left(\frac{M}{M_c}\right)^{2/d}\,\|\nabla v\|_{2}^2+ \frac{1}{m-1}\|u\|_{m}^{m}\, +VM\,.\label{r101}
\end{align}
Since
\begin{equation*}
	\phi(s)\ge \frac{s^{m}}{m-1}+1-\frac{m}{m-1}s\,,\quad s\ge 0\,,
\end{equation*}
by~\eqref{eq.A2}, we infer from~\eqref{r101} and the above inequality that
\begin{align*}
	\mathcal{L}(u,w)&=\int_{\mathbb{B}_R} \phi(u)\,\rd x-\int_{\mathbb{B}_R} u v\,\rd x+\frac{1}{2}\|\nabla v\|_{2}^2\\
&\ge \frac{1}{2} \left(1-\left(\frac{M}{M_c}\right)^{2/d}\right)\,\|\nabla v\|_{2}^2- VM  -\frac{m}{m-1}M \,.
\end{align*}
Since $|V| \le c_0\langle w\rangle $ according to Lemma~\ref{LL1}, the assertion follows.
\end{proof}

Based on the properties of the functional $\mathcal{L}$ established so far, we can now establish the main result of this section.

\begin{prop}\label{prop1}
Introducing
\begin{equation}
    \mu_{M,\bar{M}} := \inf_{(u,w)\in \mathcal{I}_{M,\bar{M},rad}(\mathbb{B}_R)} \mathcal{L}(u,w) \in [-\infty,\infty)\,, \quad \bar{M}\ge M>0\,, \label{mu}
\end{equation}
with $\mathcal{I}_{M,\bar{M},rad}(\mathbb{B}_R)$ defined in~\eqref{IS}, there holds:
\begin{equation}
\begin{split}
    \mu_{M,\bar{M}} & = -\infty \;\;\text{ for }\;\; \bar{M}\ge M>M_c\,,\\ 
    \mu_{M,\bar{M}} & > -\infty \;\;\text{ for }\;\; M\in (0,M_c] \;\text{ and }\;\bar{M}\ge M\,.
\end{split}\label{x1}
\end{equation}
More precisely, in the second case, there is $c_0=c_0(d,R)>0$ such that, for all $(u,w)\in \mathcal{I}_{M,\bar{M},rad}(\mathbb{B}_R)$,
\begin{equation}
\begin{split}
    \mathcal{L}(u,w) \ge \frac{1}{2} \left( 1 - \left( \frac{M}{M_c} \right)^{2/d} \right) \|\nabla\mathcal{K}[w-\langle w\rangle]\|_2^2 - c_0 \frac{M\bar{M}}{|\mathbb{B}_R|} - \frac{mM}{m-1}\,. \label{lbL}
\end{split}
\end{equation}
\end{prop}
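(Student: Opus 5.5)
The lower bound~\eqref{lbL} for $M\le M_c$ follows at once from Lemma~\ref{LLL2}. For $(u,w)\in\mathcal{I}_{M,\bar M,rad}(\mathbb{B}_R)$ we have $\langle w\rangle=\|w\|_1/|\mathbb{B}_R|\le \bar M/|\mathbb{B}_R|$, so $Mc_0\langle w\rangle\le c_0 M\bar M/|\mathbb{B}_R|$. Since the first term on the right-hand side of Lemma~\ref{LLL2} is nonnegative when $M\le M_c$, we obtain $\mu_{M,\bar M}\ge -c_0M\bar M/|\mathbb{B}_R|-mM/(m-1)>-\infty$.

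For $M>M_c$ the plan is to construct a concentrating family. We take a near-extremal profile for~\eqref{VHLS}. By rearrangement (Riesz) we may assume it is radially symmetric, nonincreasing, bounded and compactly supported; truncation and smoothing lose only $\varepsilon$ in the quotient. Normalize it so that $\|h\|_{L_1(\R^d)}=M$. Then set $u_\lambda(x)=\lambda^d h(\lambda x)$ and $w_\lambda=u_\lambda$, for $\lambda$ large enough that the support lies in $\mathbb{B}_R$. After mollifying slightly, $(u_\lambda,w_\lambda)\in \mathcal{I}_{M,\bar M,rad}(\mathbb{B}_R)$ because $\|w_\lambda\|_1=M\le\bar M$.

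Choosing $w=u$ gives $\mathcal{L}(u,u)=\int\phi(u)-\frac12\int u\,\mathcal{K}[u-\langle u\rangle]$. We compare $\mathcal{K}[u-\langle u\rangle]$ with the Newton potential $E_d*\bar u$ and write $v=E_d*\bar u+g$, where $g$ solves $-\Delta g=-\langle u\rangle$ in $\mathbb{B}_R$ with Neumann data $\partial_r g=-\partial_r(E_d*\bar u)=M/(\sigma_dR^{d-1})$ on $\partial\mathbb{B}_R$ (Gauss's law), plus a constant fixing the mean. Hence $g$ is a fixed quadratic polynomial plus a constant. The constant involves $\int_{\mathbb{B}_R}E_d*\bar u$, which stays bounded as $\lambda\to\infty$ because $E_d$ is locally integrable. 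Thus $\int u g=O(M^2)$ uniformly in $\lambda$.

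The scaling then gives $\int u_\lambda (E_d*u_\lambda)=\lambda^{d-2}\int h(E_d*h)$ and $\|u_\lambda\|_m^m=\lambda^{d(m-1)}\|h\|_m^m=\lambda^{d-2}\|h\|_m^m$, using $m=m_c$. Moreover, \eqref{eq.A1}-type bounds give $\phi(s)\le s^m/(m-1)+C(1+s)^{m-1}+Cs$ or similar, with the lower-order terms $o(\lambda^{d-2})$. We conclude
\[
\mathcal{L}(u_\lambda,u_\lambda)\le \lambda^{d-2}\Big(\tfrac{1}{m-1}\|h\|_m^m-\tfrac12\int h(E_d*h)\Big)+o(\lambda^{d-2}).
\]
By~\eqref{VHLS} and~\eqref{Mc}, $\tfrac12\int h(E_d*h)\ge \tfrac12(1-\varepsilon)c_dC_*M^{2/d}\|h\|_m^m=(1-\varepsilon)(M/M_c)^{2/d}\|h\|_m^m/(m-1)$. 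The bracket is therefore negative once $\varepsilon$ is small, since $M>M_c$, and letting $\lambda\to\infty$ yields $\mu_{M,\bar M}=-\infty$.

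The main obstacle is the precise upper bound for $\phi(s)$ relative to $s^m/(m-1)$, showing that the remainder is $o(s^m)$ uniformly, for instance $O(s^{m-1}+s+1)$. This has to come from the Appendix estimates on $\phi$; a direct integration of $\phi''(z)=m(1+z)^{m-1}/z$ would give $\phi(s)=s^m/(m-1)+O(s^{m-1}+s\log s+1)$, which is sufficient. The other delicate point is checking that the Neumann/mean-zero correction $g$ contributes only $O(1)$ terms, which the explicit radial formula from Lemma~\ref{LL1} handles.
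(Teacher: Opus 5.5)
Your proof is correct. It differs from the paper's in two places: the choice of profile and the way the energy is evaluated.

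\textbf{Shared part.} For $M\le M_c$ your argument is exactly the paper's: Lemma~\ref{LLL2} together with $\langle w\rangle\le\bar M/|\mathbb{B}_R|$. For $M>M_c$ the overall construction is also the paper's. You take $w_\lambda=u_\lambda=\lambda^d h(\lambda\,\cdot)$ concentrating at the origin. Your decomposition $\mathcal{K}[u_\lambda-\langle u_\lambda\rangle]=E_d*u_\lambda+\frac{M}{2\sigma_dR^d}|x|^2-\mathrm{const}$ is precisely the paper's $v_\lambda=V_\lambda-\langle V_\lambda\rangle$.

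\textbf{Where the paper goes.} The paper uses the exact optimizer $h=\kappa\zeta^{1/(m-1)}$ with $\kappa=M/M_c$. It estimates the three terms of $\mathcal{L}$ separately, using the identities of Appendix~\ref{sec.apB}: the value $I_\zeta$, and the formulas for $E_d*\zeta^{1/(m-1)}$ inside and outside $\mathbb{B}_1$. In particular, it bounds $\frac12\|\nabla v_\lambda\|_2^2$ by Young's inequality with a tuned parameter $\varepsilon_\kappa$. That step requires controlling the cross term and the exterior integral of $|\nabla(E_d*\zeta^{1/(m-1)})|^2$.

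\textbf{Where you go instead.}
\begin{enumerate}
\item You only use a near-maximizer of the quotient in \eqref{VHLS}, obtained by rearrangement, truncation and mollification. So you never need the existence of maximizers or any property of $\zeta$; the definition \eqref{Mc} is enough.
\item You exploit $w=u$ and $\langle v\rangle=0$ to write $\frac12\|\nabla v\|_2^2=\frac12\int uv$. Hence $\mathcal{L}(u,u)=\int\phi(u)-\frac12\int uv$. The quadratic correction then enters only through the uniformly bounded term $\int u_\lambda g_\lambda$. This removes the Young step and all cross terms.
\end{enumerate}

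\textbf{What each route costs and gains.} Your price is the approximation step needed to land in $W^{1,+}_{p,rad}$ while keeping the quotient within $\varepsilon$ of its supremum. The paper avoids this because $\zeta^{1/(m-1)}$ is already $C^1$ with compact support. In return, the paper's construction is fully explicit and ties the threshold directly to the characterization \eqref{b2x}.

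\textbf{On your ``main obstacle''.} It is not really one. The bound \eqref{eq.A1} with its factor $1+\varepsilon$ already suffices, and it is exactly what the paper uses. Even the crude bound $\phi(s)\le s^m/(m-1)+ms\log_+ s+m+1$, taken from the proof of Lemma~\ref{lemA1}, costs only $O(\log\lambda)=o(\lambda^{d-2})$ after integration against $u_\lambda$.
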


\begin{proof}
For $M\in (0,M_c]$ and $\bar{M}\ge M$, Proposition~\ref{prop1} readily follows from Lemma~\ref{LLL2} and the embedding of $W_p^1(\mathbb{B}_R)$ into $L_m(\mathbb{B}_R)$.

We next turn to the case $\bar{M}\ge M>M_c$ and exploit the fact that the energy associated with the initial value problem~\eqref{qsp} is unbounded from below \cite[Proposition~3.4]{BCL2009}. However, being in a bounded domain with a non-degenerate diffusion, it is not possible to apply directly the analysis performed in \cite{BCL2009} but it rather has to be adapted to our setting, in particular to comply with the boundary conditions. We set $\kappa:= M/M_c>1$ and define, for $\lambda>\max\{1,1/R\}$ and $\zeta$ given in~\eqref{b1},
\begin{subequations}\label{fl}
\begin{align}
    u_{\lambda}(x) & := \kappa \lambda^d \zeta^{1/(m-1)}(\lambda x)\,, \qquad x\in \mathbb{B}_R\,, \label{ul}\\
    V_{\lambda}(x) & := \kappa \lambda^{d-2} \big(E_d*\zeta^{1/(m-1)}\big)(\lambda x) + \frac{M}{2\sigma_d R^d} |x|^2\,, \qquad x\in \mathbb{B}_R\,, \label{Vl} \\
    v_{\lambda}(x) & := V_{\lambda}(x) - \langle V_{\lambda} \rangle\,, \qquad x\in \mathbb{B}_R\,, \label{vl}\\
    w_{\lambda}(x) & := u_{\lambda}(x)\,, \qquad x\in\mathbb{B}_R\,. \label{wl}
\end{align}
\end{subequations}
We first collect several properties of the functions defined in~\eqref{fl} and, in particular, check that $(u_{\lambda},w_{\lambda})$ belongs to $\mathcal{I}_{M,\bar{M},rad}(\mathbb{B}_R)$. Indeed, $u_{\lambda}=w_{\lambda}$ is clearly a radially symmetric function in $W_p^{1,+}(\mathbb{B}_R)$ with compact support in $\bar{\mathbb{B}}_{1/\lambda}\subset \mathbb{B}_R$ owing to $\lambda R>1$ and $\mathrm{supp}\, \zeta = \bar{\mathbb{B}}_1$. Moreover, since $\mathbb{B}_1\subset \mathbb{B}_{\lambda R}$, we deduce from~\eqref{b2x} that
\begin{equation}
	\|u_\lambda\|_1 = \|w_\lambda\|_1 = \kappa \int_{\mathbb{B}_{\lambda R}} \zeta^{1/(m-1)}(y)\,\rd y = \frac{M}{M_c} \int_{\mathbb{B}_{1}} \zeta^{1/(m-1)}(y)\,\rd y = M\,. \label{ub01}
\end{equation}
Consequently,
\begin{equation}
    (u_{\lambda},w_{\lambda}) \in \mathcal{I}_{M,\bar{M},rad}(\mathbb{B}_R)\,. \label{ub00}
\end{equation}
We next infer from~\eqref{eq.B3} and the constraint $\lambda R>1$ that, for $x\in \mathbb{B}_R\setminus \mathbb{B}_{1/\lambda}$, 
\begin{align*}
    v_{\lambda}(x) & = \frac{\kappa M_c \lambda^{d-2}}{(d-2)\sigma_d} \frac{1}{|\lambda x|^{d-2}} + \frac{M}{2\sigma_d R^d} |x|^2 - \langle V_{\lambda}\rangle \\
    & = \frac{M}{(d-2)\sigma_d} \frac{1}{|x|^{d-2}} + \frac{M}{2\sigma_d R^d} |x|^2 - \langle V_{\lambda}\rangle\,,
\end{align*}
and thus
\begin{equation*}
    \nabla v_{\lambda}(x) = - \frac{M}{\sigma_d} \frac{x}{|x|^d} + \frac{M}{\sigma_d R^d} x\,, \qquad x\in \mathbb{B}_R\setminus \mathbb{B}_{1/\lambda}\,.
\end{equation*}
Therefore,
\begin{equation}
    \nabla v_{\lambda}(x)\cdot \mathbf{n} = 0\,, \qquad x\in\partial\mathbb{B}_R\,. \label{ub02}
\end{equation}
Moreover, for $x\in\mathbb{B}_R$, 
\begin{align*}
    - \Delta v_{\lambda}(x) & = - \kappa \lambda^d \Delta\big(E_d*\zeta^{1/(m-1)}\big)(\lambda x) - \frac{dM}{\sigma_d R^d} = \kappa \lambda^d \zeta^{1/(m-1)}(\lambda x) - \frac{M}{|\mathbb{B}_R|}\,,
\end{align*}
hence, by~\eqref{wl} and~\eqref{ub01},
\begin{equation}
    - \Delta v_{\lambda} = w_\lambda - \langle w_\lambda\rangle \;\text{ in }\; \mathbb{B}_R\,. \label{ub03}
\end{equation}
An immediate consequence of~\eqref{ub02} and~\eqref{ub03} is that
\begin{equation}
    v_{\lambda} = V_{\lambda} - \langle V_{\lambda}\rangle = \mathcal{K}[w_\lambda - \langle w_\lambda\rangle]\,. \label{ub04} 
\end{equation}
Thanks to~\eqref{ub01} and~\eqref{ub04},
\begin{align}
    \mathcal{L}(u_{\lambda},w_{\lambda}) & = \int_{\mathbb{B}_R} \phi(u_{\lambda})(x)\,\rd x - \int_{\mathbb{B}_R} (u_{\lambda} v_{\lambda})(x)\,\rd x + \frac{1}{2} \int_{\mathbb{B}_R} |\nabla v_{\lambda}(x)|^2\;\rd x \nonumber\\
    & = \int_{\mathbb{B}_R} \phi(u_{\lambda})(x)\,\rd x + M \langle V_{\lambda}\rangle - \int_{\mathbb{B}_R} (u_{\lambda} V_{\lambda})(x)\,\rd x \label{ub05} \\
    & \qquad + \frac{1}{2} \int_{\mathbb{B}_R} |\nabla v_{\lambda}(x)|^2\,\rd x\,, \nonumber 
\end{align}
and we now compute and estimate separately each term on the right-hand side of~\eqref{ub05}. To this end, let $\varepsilon\in (0,1)$ to be specified later. We first infer from~\eqref{eq.A1} that
\begin{align}
    \int_{\mathbb{B}_R} \phi(u_{\lambda})(x)\,\rd x & = \frac{1}{\lambda^d} \int_{\mathbb{B}_{\lambda R}} \phi\big(\kappa \lambda^d \zeta^{1/(m-1)}\big)(y)\,\rd y \nonumber \\
    & \le \frac{(1+\varepsilon)\kappa^m \lambda^{d-2}}{m-1} \int_{\mathbb{B}_{\lambda R}} \zeta^{m/(m-1)}(y)\,\rd y + c_\phi(\varepsilon) \frac{|\mathbb{B}_{\lambda R}|}{\lambda^d} \nonumber \\
    & \le \frac{(1+\varepsilon)\kappa^m \lambda^{d-2}}{m-1} I_\zeta + |\mathbb{B}_R| c_\phi(\varepsilon)\,, \label{ub06}
\end{align}
where $I_\zeta$ is defined in~\eqref{eq.B4}.

Next, by~\eqref{ul},~\eqref{Vl}, and~\eqref{eq.B6}, 
\begin{align}
   \int_{\mathbb{B}_R} (u_{\lambda} V_{\lambda})(x)\,\rd x & = \kappa^2 \lambda^{2d-2} \int_{\mathbb{B}_R} \big[\zeta^{1/(m-1)} \big(E_d*\zeta^{1/(m-1)}\big)\big](\lambda x)\,\rd x \nonumber\\
   & \qquad + \frac{M\kappa \lambda^d}{2\sigma_d R^d} \int_{\mathbb{B}_R} |x|^2 \zeta^{1/(m-1)}(\lambda x)\,\rd x \nonumber \\
   & \ge \kappa^2 \lambda^{d-2} \int_{\mathbb{B}_{\lambda R}} \big[\zeta^{1/(m-1)} \big(E_d*\zeta^{1/(m-1)}\big)\big](y)\,\rd y \nonumber\\
   & = \kappa^2 \lambda^{d-2} \int_{\mathbb{B}_1} \big[\zeta^{1/(m-1)} \big(E_d*\zeta^{1/(m-1)}\big)\big](y)\,\rd y \nonumber \\
   & = \frac{2\kappa^2 \lambda^{d-2}}{m-1} I_\zeta\,. \label{ub07}
\end{align}
Also, by~\eqref{Vl}, \eqref{eq.B2} and~\eqref{eq.B3}, 
\begin{align*}
    M \langle V_{\lambda} \rangle & = \frac{M\kappa \lambda^{d-2}}{|\mathbb{B}_R|} \int_{\mathbb{B}_R} \big(E_d*\zeta^{1/(m-1)}\big)(\lambda x)\,\rd x + \frac{M^2}{2\sigma_d R^d |\mathbb{B}_R|} \int_{\mathbb{B}_R} |x|^2\,\rd x \\
    & = \frac{M\kappa}{\lambda^2|\mathbb{B}_R|} \int_{\mathbb{B}_{\lambda R}} \big(E_d*\zeta^{1/(m-1)}\big)(y)\,\rd y + \frac{M^2 R^2}{2(d+2) |\mathbb{B}_R|} \\
    & = \frac{M\kappa}{\lambda^2|\mathbb{B}_R|} \int_{\mathbb{B}_1} \left( \frac{m}{m-1} \zeta(y) + \frac{2-m}{m-1} \frac{I_\zeta}{M_c} \right)\,\rd y \\
    & \qquad + \frac{M\kappa}{\lambda^2|\mathbb{B}_R|} \frac{M_c}{(d-2)\sigma_d} \int_{\mathbb{B}_{\lambda R}\setminus\mathbb{B}_1} \frac{\rd y}{|y|^{d-2}}  + \frac{M^2 R^2}{2(d+2) |\mathbb{B}_R|} \\
    & \le \frac{M\kappa R^2}{|\mathbb{B}_R|} \int_{\mathbb{B}_1} \left( \frac{m}{m-1} \zeta(y) + \frac{2-m}{m-1} \frac{I_\zeta}{M_c} \right)\,\rd y\\
    & \qquad + \frac{M^2 R^2}{2(d-2)|\mathbb{B}_R|} + \frac{M^2 R^2}{2(d+2) |\mathbb{B}_R|} \,,
\end{align*}
hence
\begin{equation}
    M \langle V_{\lambda} \rangle \le c_1\,,   \label{ub08}
\end{equation}
with
\begin{equation*}
    c_1 := \frac{M\kappa R^2}{|\mathbb{B}_R|} \int_{\mathbb{B}_1} \left( \frac{m}{m-1} \zeta(y) + \frac{2-m}{m-1} \frac{I_\zeta}{M_c} \right)\,\rd y + \frac{d M^2 R^2}{(d-2)(d+2)|\mathbb{B}_R|} \,.
\end{equation*}
Finally, by Young's inequality,
\begin{align}
    & \frac{1}{2} \int_{\mathbb{B}_R} |\nabla v_{\lambda}(x)|^2\,\rd x = \frac{1}{2} \int_{\mathbb{B}_R} |\nabla V_{\lambda}(x)|^2\,\rd x \nonumber \\
    & \quad \le \frac{(1+\varepsilon)\kappa^2 \lambda^{2d-2}}{2} \int_{\mathbb{B}_R} \left|\nabla\big(E_d*\zeta^{1/(m-1)}\big)(\lambda x)\right|^2\,\rd x + \frac{1+\varepsilon}{2\varepsilon} \frac{M^2}{\sigma_d^2 R^{2d}} \int_{\mathbb{B}_R} |x|^2\,\rd x \nonumber \\
    & \quad = \frac{(1+\varepsilon)\kappa^2 \lambda^{d-2}}{2} \int_{\mathbb{B}_{\lambda R}} \left|\nabla\big(E_d*\zeta^{1/(m-1)}\big)(y)\right|^2\,\rd y + \frac{1+\varepsilon}{2(d+2)\varepsilon} \frac{M^2}{\sigma_d R^{d-2}} \nonumber \\
    & \quad = \frac{(1+\varepsilon)\kappa^2 \lambda^{d-2}}{2} \int_{\mathbb{B}_1} \left|\nabla\big(E_d*\zeta^{1/(m-1)}\big)(y)\right|^2\,\rd y \label{ub09} \\
    & \quad\quad + \frac{(1+\varepsilon)\kappa^2 \lambda^{d-2}}{2} \int_{\mathbb{B}_{\lambda R}\setminus\mathbb{B}_1} \left|\nabla\big(E_d*\zeta^{1/(m-1)}\big)(y)\right|^2\,\rd y + \frac{c_2}{\varepsilon} \nonumber \,,
\end{align}
with $c_2 := M^2/(\sigma_d R^{d-2})$. Now, it follows from~\eqref{eq.B2} and~\eqref{eq.B5} that
\begin{equation*}
    \int_{\mathbb{B}_1} \left|\nabla\big(E_d*\zeta^{1/(m-1)}\big)(y)\right|^2\,\rd y = \left( \frac{m}{m-1} \right)^2 \int_{\mathbb{B}_1} |\nabla\zeta(y)|^2\,\rd y = \frac{m}{m-1} I_\zeta
\end{equation*}
and from~\eqref{eq.B3} and~\eqref{eq.B5} that
\begin{align*}
    \int_{\mathbb{B}_{\lambda R}\setminus\mathbb{B}_1} \left|\nabla\big(E_d*\zeta^{1/(m-1)}\big)(y)\right|^2\,\rd y & = \frac{M_c^2}{\sigma_d^2} \int_{\mathbb{B}_{\lambda R}\setminus\mathbb{B}_1} \frac{\rd y}{|y|^{2d-2}} \\
    & \le \frac{M_c^2}{\sigma_d} \int_1^\infty r^{1-d}\,\rd r = \frac{M_c^2}{(d-2)\sigma_d} \\
    & = \frac{2-m}{m-1} I_\zeta\,.
\end{align*}
Inserting the above estimates in~\eqref{ub09} leads us to 
\begin{equation*}
    \frac{1}{2} \int_{\mathbb{B}_R} |\nabla v_{\lambda}(x)|^2\,\rd x \le \frac{(1+\varepsilon)\kappa^2 \lambda^{d-2}}{m-1} I_\zeta + \frac{c_2}{\varepsilon}\,.
\end{equation*}
Combining the above estimate with~\eqref{ub05}, \eqref{ub06}, \eqref{ub07} and~\eqref{ub08} gives
\begin{align}
   \mathcal{L}(u_{\lambda},w_{\lambda}) & \le  \frac{(1+\varepsilon)\kappa^m \lambda^{d-2}}{m-1} I_\zeta + |\mathbb{B}_R| c_\phi(\varepsilon)  + c_1 - \frac{2\kappa^2 \lambda^{d-2}}{m-1} I_\zeta + \frac{(1+\varepsilon)\kappa^2 \lambda^{d-2}}{m-1} I_\zeta + \frac{c_2}{\varepsilon} \nonumber \\
   & = \frac{\kappa^2 \lambda^{d-2}}{m-1} I_\zeta \left[ \frac{1+\varepsilon}{\kappa^{2-m}} - (1 - \varepsilon) \right] + |\mathbb{B}_R| c_\phi(\varepsilon)  + c_1 + \frac{c_2}{\varepsilon}\,. \label{ub10}
\end{align}
Now, since $\kappa>1$ due to $M>M_c$, we choose
\begin{equation*}
    \varepsilon = \varepsilon_\kappa := \frac{1 - \kappa^{m-2}}{3+\kappa^{m-2}} \in (0,1)\,,
\end{equation*}
so that
\begin{equation*}
    \frac{1+\varepsilon_\kappa}{\kappa^{2-m}} = \frac{4\kappa^{m-2}}{3+\kappa^{m-2}}\,, \quad 1 - \varepsilon_\kappa = \frac{2+2\kappa^{m-2}}{3+\kappa^{m-2}}  
\end{equation*}
and thus
\begin{equation*}
   \frac{1+\varepsilon_\kappa}{\kappa^{2-m}} - (1 - \varepsilon_\kappa) = - \frac{2-2\kappa^{m-2}}{3+\kappa^{m-2}}\,. 
\end{equation*}
Choosing $\varepsilon=\varepsilon_\kappa$ in~\eqref{ub10}, we end up with
\begin{equation*}
    \mathcal{L}(u_{\lambda},w_{\lambda}) \le - \frac{2\kappa^2 \lambda^{d-2}}{m-1} \frac{1-\kappa^{m-2}}{3+\kappa^{m-2}} I_\zeta + |\mathbb{B}_R| c_\phi(\varepsilon_\kappa) + c_1 + \frac{c_2}{\varepsilon_\kappa}\,.
\end{equation*}
Since $d\ge 3$, it readily follows from~\eqref{ub00} and the above inequality that
\begin{equation*}
    \mu_{M,\bar{M}} \le \liminf_{\lambda\to\infty} \mathcal{L}(u_{\lambda},w_{\lambda}) = -\infty\,,
\end{equation*}
and the proof is complete.
\end{proof}

\begin{rem}
In the semilinear case $m=1$ in space dimension $d=2$, similar arguments are used in \cite{La2019} to establish that the corresponding functional is not bounded from below, following the approach developed in \cite{FJ2022, Ho2002, HW2001}. 
\end{rem}

\section{Proof of Theorem~\ref{THM1}~\textbf{(I)}}\label{sec.3}

In this section, we prove Theorem~\ref{THM1}~\textbf{(I)}; that is, the  global solution $(u,w)$ to~\eqref{E} in the ball $\mathbb{B}_R$ is bounded for any initial value $(u^0,w^0)\in \mathcal{I}_{M,\bar{M},rad}(\mathbb{B}_R)$ with $M\in (0,M_c)$ and $\bar{M}\ge M$.

\medskip

Throughout this section, $c$ and $(c_i)_{i\ge 1}$ denote positive constants depending only on $d$, $R$, $M$, $\bar{M}$, and $(u^0,w^0)$. The dependence upon additional parameters is indicated explicitly.

\begin{proof}[Proof of Theorem~\ref{THM1}~\textbf{(I)}]
We consider $M\in (0,M_c)$, $\bar{M}\ge M$ and $(u^0,w^0)\in \mathcal{I}_{M,\bar{M},rad}(\mathbb{B}_R)$. Let $(u,w)$ be the corresponding global solution to~\eqref{E} and recall that $(u(t),w(t))\in \mathcal{I}_{M,\bar{M},rad}(\mathbb{B}_R)$ for all $t\ge 0$ according to Theorem~\ref{T1:Ex}. Then, by Proposition~\ref{PP2},
\begin{equation*}
	\mathcal{L}(u(t),w(t)) \le \mathcal{L}\big(u^0,w^0\big)\,, \quad t\ge 0\,,
\end{equation*}
which we combine with~\eqref{lbL} and the notation $v=\mathcal{K}[w-\langle w\rangle]$ to obtain, for $t\ge 0$, 
\begin{align*}
	\frac{1}{2} \left(1-\left(\frac{M}{M_c}\right)^{2/d}\right)\,\|\nabla v(t)\|_{2}^2 & \le \mathcal{L}\big(u^0,w^0\big) + c_0 \frac{M \bar{M}}{|\mathbb{B}_R|} + \frac{mM}{m-1}\,, \quad t\ge 0\,.
\end{align*}
Therefore, in view of $M<M_c$,
\begin{equation}\label{e20}
	\|\nabla v(t)\|_{2}\le c_1\,,\quad t\ge 0\,.
\end{equation}
As $d\ge 3$, we set $2^* := 2d/(d-2)>2$ and recall that $W_2^1(\mathbb{B}_R)$ is continuously embedded in $L_{2^*}(\mathbb{B}_R)$. By H\"older's inequality,
\begin{equation}
	\left| \int_{\mathbb{B}_R} u(t)v(t)\,\rd x\right| \le \|u(t)\|_{(2^*)'}\,\|v(t)\|_{2^*}\le c\|u(t)\|_{(2^*)'} \,\|\nabla v(t)\|_2\,,\quad t\ge 0\,.
\end{equation}
Writing
\begin{equation*}
	(2^*)'=\frac{2d}{d+2}=\vartheta+(1-\vartheta)m\,,\qquad \vartheta:=\frac{2}{d+2}\,,
\end{equation*}
we derive again with H\"older's inequality that
\begin{align*}
	\|u(t)\|_{(2^*)'}^{(2^*)'} & = \int_{\mathbb{B}_R} u(t)^{\vartheta+(1-\vartheta)m}\, \rd x \\
	& \le \|u(t)\|_1^\vartheta \|u(t)\|_m^{m(1-\vartheta)} = M^\vartheta \|u(t)\|_m^{m(1-\vartheta)}\,,
\end{align*}
hence
\begin{equation}\label{e14}
	\|u(t)\|_{(2^*)'}\le M^{1/d} \|u(t)\|_m^{m/2}\,,\quad t\ge 0\,.
\end{equation}
It then follows from~\eqref{e20}-\eqref{e14} that there is $c_2>0$ with
\begin{align}\label{e15}
\left| \int_{\mathbb{B}_R} u(t)v(t)\,\rd x \right| \le c_2 \|u(t)\|_m^{m/2}\,,\quad t\ge 0\,.
\end{align}
Invoking Proposition~\ref{PP2} and using~\eqref{eq.A2} and~\eqref{e15}, along with Young's inequality, we obtain that
\begin{align*}
	\mathcal{L}(u^0,w^0)&\ge \mathcal{L}(u(t),w(t))\ge \int_{\mathbb{B}_R} \phi\big(u(t)\big)\,\rd x - \left| \int_{\mathbb{B}_R} u(t) v(t)\,\rd x \right|\\
	&\ge \frac{1}{m-1}\|u(t)\|_m^m-\frac{m}{m-1}M - c_2 \|u(t)\|_m^{m/2} \\
	&\ge \frac{1}{2(m-1)} \|u(t)\|_m^m - \frac{mM}{m-1} - \frac{m-1}{2} c_2^2
\end{align*}
for $t\ge 0$. Consequently, $\|u(t)\|_m\le c_3$ for $t\ge 0$. Theorem~\ref{T1:Ex} now implies that 
\begin{equation*}
	\sup_{t\ge 0}\big(\|u(t)\|_\infty+\|w(t)\|_\infty\big)<\infty\,.
\end{equation*}
This yields Theorem~\ref{THM1}~\textbf{(I)}.
\end{proof}

\section{Proof of Theorem~\ref{THM1}~\textbf{(II)}}\label{sec.4}

Following the approach developed in \cite{HW2001} for the two dimensional case, the main step in the proof of Theorem~\ref{THM1}~\textbf{(II)} is to show that the energy functional $\mathcal{L}$ is bounded from below on the set of radially symmetric stationary solutions $(u,w)$ to~\eqref{E} with  $\|u\|_1=M>M_c$. Observing that $u=w$ for stationary solutions to~\eqref{E}, we define the set $\mathcal{S}_{M,rad}$ of radially symmetric stationary solutions to~\eqref{E} for $M>0$ as follows: $(u,w)\in\mathcal{S}_{M,rad}$ if and only if 
\begin{subequations}\label{Erad}
\begin{equation}
	u = w \in W_{p,rad}^{2,+}(\mathbb{B}_R)\,,\qquad \|u\|_1=M\,,
\end{equation}
and
\begin{align}
	\mathrm{div}\big(u\nabla(\phi'(u)-v)\big)&=0\ \text{ in }\ \mathbb{B}_R\,,\label{E1rad}\\
	\nabla u\cdot \textbf{n} & = 0 \ \text{ on }\ \partial\mathbb{B}_R \,,\label{NBCrad} \\
	v & = \mathcal{K}[u-\langle u\rangle]\ \text{ in }\ \mathbb{B}_R\,.\label{E2rad}
\end{align}
\end{subequations}
We point out that, according to the above definition, any $(u,w)\in \mathcal{S}_{M,rad}$ with $M>M_c$ belongs to $\mathcal{I}_{M,\bar{M},rad}(\mathbb{B}_R)$ for all $\bar{M}\ge M$.

\medskip

Next, we show that the analysis of $\mathcal{S}_{M,rad}$ can be reduced to that of radially symmetric and positive solutions to a single elliptic equation. To this end, given $a>0$, we denote by $\Sigma_a$ the set of functions 
\begin{equation*}
	z\in W_{p,rad}^{2}(\mathbb{B}_R)\,, \quad z(x)>0 \;\text{ for }\; x\in \mathbb{B}_R\,,\quad  \langle z\rangle = a\,,  
\end{equation*}
solving
\begin{equation*}
	-\Delta \phi'(z)=z-a\ \text{ in }\ \mathbb{B}_R\,,\qquad \nabla \phi'(z) \cdot \mathbf{n} = 0 \ \text{ on }\ \partial\mathbb{B}_R\,, 
\end{equation*}
and establish that $u\in \Sigma_{M/|\mathbb{B}_R|}$ for any $(u,w)\in \mathcal{S}_{M,rad}$. 

\begin{prop}\label{L19}
Let $M>0$ and $(u,w)\in\mathcal{S}_{M,rad}$. Then $u\in\Sigma_a$ for $a:=M/|\mathbb{B}_R|$. In fact, with $v = \mathcal{K}[u-\langle u\rangle]$ it holds that
\begin{equation}\label{n1}
	u(x)\ge\min\left\{1,\exp\left(m^{-1}\big(\langle \phi'(u)\rangle-\|v\|_\infty\big)\right)\right\}>0\,,\quad x\in\bar{\mathbb{B}}_R\,.
\end{equation}
Moreover, $\phi'(u)=v+\langle \phi'(u)\rangle$ satisfies
\begin{equation}\label{n3}
	\|\nabla\phi'(u)\|_2^2=\int_\Omega \phi'(u)\left(u-a\right)\,\rd x\,.
\end{equation}
\end{prop}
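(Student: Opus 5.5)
The plan is to first show that $u$ is bounded away from zero, and then to deduce $\phi'(u)-v=\mathrm{const}$, the equation for $\phi'(u)$, and \eqref{n3} directly from that. Everything is done in the radial variable.

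\emph{Step 1: zero flux.} Since $u\in W^2_{p,rad}(\mathbb{B}_R)$ with $p>2d$, we have $u\in C^1(\bar{\mathbb{B}}_R)$ and $v\in C^1(\bar{\mathbb{B}}_R)$. Write \eqref{E1rad} in radial form, with flux
\begin{equation*}
J(r):=r^{d-1}\big(m(1+u)^{m-1}\partial_r u-u\,\partial_r v\big)(r).
\end{equation*}
Then $\partial_r J=0$ on $(0,R)$, so $J$ is constant. At $r=R$ we have $\partial_r u(R)=0$ by \eqref{NBCrad} and $\partial_r v(R)=0$ by \eqref{E2rad}. Hence $J\equiv 0$ on $(0,R)$.

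\emph{Step 2: lower bound \eqref{n1}, the main obstacle.} We cannot divide by $u$ a priori, since $u$ may vanish. Consider the open set $\{r\in(0,R]: u(r)>0\}$. On each of its connected components, $J=0$ gives $\partial_r\big(\phi'(u)-v\big)=0$, because $u\,\partial_r\phi'(u)=m(1+u)^{m-1}\partial_r u$. So $\phi'(u)=v+C$ on such a component $(r_1,r_2)$.

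Suppose $u(r_1)=0$ with $r_1>0$ (or $r_1=0$ with $u(0)=0$). As $r\to r_1$, $\phi'(u)\to-\infty$, since $\phi'(z)\sim m\ln z$ near $0$. But $v+C$ stays bounded, which is a contradiction. Thus no component has an endpoint where $u$ vanishes. Since $\|u\|_1=M>0$, the positivity set is non-empty, so $u>0$ on all of $\bar{\mathbb{B}}_R$ by continuity, and $\phi'(u)=v+C$ on $[0,R]$.

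Averaging and using $\langle v\rangle=0$ gives $C=\langle\phi'(u)\rangle$, so $\phi'(u)=v+\langle\phi'(u)\rangle$. For the explicit bound, where $u\le1$ we use $\phi'(z)\le m\ln z$ for $z\le 1$. This follows from $\phi'(1)=0$ and $\phi''(z)=m(1+z)^{m-1}/z\ge m/z$. Therefore $m\ln u\ge\phi'(u)\ge\langle\phi'(u)\rangle-\|v\|_\infty$, which yields \eqref{n1}.

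\emph{Step 3: $u\in\Sigma_a$ and \eqref{n3}.} From $\phi'(u)=v+\langle\phi'(u)\rangle$ and \eqref{opKb} we get $-\Delta\phi'(u)=-\Delta v=u-\langle u\rangle=u-a$ with $a=M/|\mathbb{B}_R|$. The Neumann condition for $\phi'(u)$ is inherited from $v$. Positivity is Step 2 and $\langle u\rangle=a$ by definition, so $u\in\Sigma_a$.

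Finally, multiply $-\Delta\phi'(u)=u-a$ by $\phi'(u)$ and integrate by parts using the Neumann condition; this gives \eqref{n3}. The integration by parts is legitimate because $\phi'(u)$ lies in $W^2_p$: $u$ is bounded below by a positive constant and lies in $W^2_p$, and $\phi'$ is smooth on $(0,\infty)$.
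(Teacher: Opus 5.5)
Your proposal is correct and follows essentially the paper's route. On each connected component of the positivity set you obtain $\phi'(u)=v+\mathrm{const}$, you rule out zeros of $u$ at the endpoints because $\phi'(u)\to-\infty$ while $v$ stays bounded, you identify the constant as $\langle\phi'(u)\rangle$ by averaging, and you read off the lower bound from $\phi'(z)\le m\log z$ on $(0,1]$.

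The differences are only in presentation. You make explicit, via the radial flux $J$, the zero-flux step that the paper asserts in one line. You also exclude vanishing of $u$ qualitatively before deriving the quantitative bound, whereas the paper uses the quantitative bound on each component to conclude that the component is the whole ball.
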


\begin{proof}
Note that $u$ is continuous on $\bar{\mathbb{B}}_R$ and let $P:=\{x\in\mathbb{B}_R\,:\, u(x)>0\}$ be the positivity set of~$u$. Then~\eqref{E1rad} and~\eqref{NBCrad} imply that $\nabla(\phi'(u)-v)=0$ in $P$. Hence, for each connected component $P_c$ of $P$, there is a constant $k_c\in\mathbb{R}$ such that 
\begin{equation}\label{n20}
	\phi'(u)=v+k_c\ \text{ in }\ P_c\,.
\end{equation}
Either $\min_{\bar{P}_c} u \ge 1$, from which we deduce that
\begin{equation*}
	P_c=P=\mathbb{B}_R \;\;\text{ and }\;\; u(x) \ge 1\,, \quad x\in\bar{\mathbb{B}}_R\,.
\end{equation*}
Or $\min_{\bar{P}_c} u < 1$ and we may pick any $x\in P_c$ with $0< u(x)<1$. Then~\eqref{phi} and~\eqref{n20} yield
\begin{equation*}
	-v(x)-k_c=-\phi'(u(x))=m\int_{u(x)}^1 \frac{(1+z)^{m-1}}{z}\,\rd z\ge -m\log(u(x))
\end{equation*}
and thus
\begin{equation*}
	u(x)\ge \exp\left(m^{-1}\big(k_c-\|v\|_\infty\big)\right)>0\,,
\end{equation*}
from which we again deduce that
\begin{equation*}
	P_c=P=\mathbb{B}_R \;\;\text{ and }\;\; u(x) \ge \exp\left(m^{-1}\big(k_c-\|v\|_\infty\big)\right) >0\,, \quad x\in\bar{\mathbb{B}}_R\,.
\end{equation*}
We have thus shown that $P_c=P=\mathbb{B}_R$, and we may integrate~\eqref{n20} over $\mathbb{B}_R$ and use $\langle v\rangle=0$ to conclude that $k_c=\langle\phi'(u)\rangle$. Gathering the outcome of the above analysis leads to~\eqref{n1} and the identity 
\begin{equation*}
    \phi'(u)-\langle \phi'(u)\rangle=v=\mathcal{K}\left[ u - \langle u\rangle\right]\,,
\end{equation*}
so that $\phi'(u)$ indeed solves
\begin{equation}\label{x0b}
	-\Delta \phi'(u)=u-a\ \text{ in }\ \mathbb{B}_R\,,\qquad \nabla \phi'(u) \cdot \mathbf{n} = 0 \ \text{ on }\ \partial\mathbb{B}_R\,,
\end{equation}
for $a=M/|\mathbb{B}_R|$ and satisfies~\eqref{n3}.
\end{proof}

\begin{cor}\label{C20}
Let $M>0$ and consider $(u,w)\in\mathcal{S}_{M,rad}$. Then
\begin{equation}\label{n4}
	\mathcal{L}(u,u) = \int_{\mathbb{B}_R} \phi(u)\,\rd x - \frac{1}{2} \|\nabla\phi'(u)\|_2^2
\end{equation}
and 
\begin{equation}\label{n5}
	M\langle \phi'(u)\rangle\le 2\mathcal{L}(u,u)+M\phi''(1)\,.
\end{equation}
\end{cor}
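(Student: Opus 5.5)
We prove \eqref{n4} and \eqref{n5} from the identities of Proposition~\ref{L19} together with one elementary pointwise inequality on $\phi$. Throughout, $a = M/|\mathbb{B}_R|$ and $v = \mathcal{K}[u-\langle u\rangle]$. By Proposition~\ref{L19}, $v = \phi'(u) - \langle\phi'(u)\rangle$, so $\nabla v = \nabla\phi'(u)$, and $u$ is bounded below by a positive constant on $\bar{\mathbb{B}}_R$. Hence $\phi'(u)$ is bounded and every integral below is finite.

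\emph{Identity \eqref{n4}.} Since $w = u$, \eqref{i1q} gives
\begin{equation*}
\mathcal{L}(u,u) = \int_{\mathbb{B}_R}\phi(u)\,\rd x - \int_{\mathbb{B}_R} uv\,\rd x + \frac12\|\nabla v\|_2^2 .
\end{equation*}
Because $\langle v\rangle = 0$, we have $\int uv\,\rd x = \int (u-a)v\,\rd x$. Testing $-\Delta v = u - a$ (with homogeneous Neumann boundary conditions) against $v$ then yields $\int (u-a)v\,\rd x = \|\nabla v\|_2^2$. Substituting and using $\nabla v = \nabla\phi'(u)$ gives \eqref{n4}.

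\emph{Reduction of \eqref{n5} to a pointwise inequality.} Insert \eqref{n3} into \eqref{n4}. Since $a\int_{\mathbb{B}_R}\phi'(u)\,\rd x = M\langle\phi'(u)\rangle$, this gives
\begin{equation*}
2\mathcal{L}(u,u) = \int_{\mathbb{B}_R}\big(2\phi(u) - u\phi'(u)\big)\,\rd x + M\langle\phi'(u)\rangle .
\end{equation*}
As $\|u\|_1 = M$, \eqref{n5} therefore follows once we know that
\begin{equation*}
h(z) := z\phi'(z) - 2\phi(z) - \phi''(1)\,z \le 0 \quad\text{for all } z>0 .
\end{equation*}

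\emph{Proof of $h\le 0$.} This is the only step that needs a genuine argument. We show $h$ is concave with a critical point at $z=1$.
\begin{itemize}
\item Derivatives. We have $h'(z) = z\phi''(z) - \phi'(z) - \phi''(1) = m(1+z)^{m-1} - \phi'(z) - \phi''(1)$, and
\begin{equation*}
h''(z) = m(m-1)(1+z)^{m-2} - m\frac{(1+z)^{m-1}}{z} = m(1+z)^{m-2}\Big[(m-1) - \frac{1+z}{z}\Big] .
\end{equation*}
\item Concavity. Since $(1+z)/z > 1 > m-1$, we get $h''<0$ on $(0,\infty)$.
\item Critical point. Because $\phi'(1)=0$ and $\phi''(1) = m2^{m-1}$, we get $h'(1) = 0$.
\item Conclusion. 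A concave function with $h'(1)=0$ attains its maximum at $z=1$. Using $\phi(1)=\phi'(1)=0$, this gives $h(z)\le h(1) = -\phi''(1) < 0$ for all $z>0$.
\end{itemize}
Integrating $h(u)\le 0$ over $\mathbb{B}_R$ yields $\int_{\mathbb{B}_R}\big(2\phi(u) - u\phi'(u)\big)\,\rd x \ge -\phi''(1)M$, which is exactly \eqref{n5}.
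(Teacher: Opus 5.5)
Your proof is correct and follows the paper's route. You obtain \eqref{n4} by testing $-\Delta v = u-a$ against $v$, and \eqref{n5} by combining \eqref{n3} with \eqref{n4} and a pointwise convexity inequality for $\phi$ anchored at $z=1$. The only difference is that the paper uses the sharper inequality \eqref{L20a} (with $m\phi$ in place of your $2\phi$) and then discards $(m-2)\int_{\mathbb{B}_R}\phi(u)\,\rd x\le 0$, whereas you prove the combined inequality $z\phi'(z)\le 2\phi(z)+\phi''(1)(z-1)$ directly from the concavity of $h$.
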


\begin{proof}
It follows from~\eqref{E2rad} that
\begin{equation*}
	\int_{\mathbb{B}_R} uv\,\rd x = \int_{\mathbb{B}_R} \big(u-\langle u\rangle\big) v\,\rd x =  - \int_{\mathbb{B}_R} v \Delta v\,\rd x = \|\nabla  v\|_2^2\,.
\end{equation*}
Therefore, since $\nabla v=\nabla\phi'(u)$ due to Proposition~\ref{L19}, we have
\begin{align*}
	\mathcal{L}(u,u) & = \int_{\mathbb{B}_R} \phi(u)\,\rd x - \int_{\mathbb{B}_R} uv\,\rd x + \frac{1}{2} \|\nabla v\|_2^2 \\
	& = \int_{\mathbb{B}_R} \phi(u)\,\rd x - \frac{1}{2} \|\nabla\phi'(u)\|_2^2\,,
\end{align*}
which is~\eqref{n4}. As for~\eqref{n5} we use~\eqref{n3}, \eqref{L20a} and~\eqref{n4} to derive
\begin{align*}
	M\langle \phi'(u)\rangle &=\int_{\mathbb{B}_R} u\phi'(u)\,\rd x - \|\nabla\phi'(u)\|_2^2\\
&\le m\int_{\mathbb{B}_R} \phi(u)\,\rd x + \phi''(1) \left( M-|\mathbb{B}_R| \right) - \|\nabla\phi'(u)\|_2^2\\
&=(m-2)\int_{\mathbb{B}_R} \phi(u)\,\rd x + \phi''(1) \left( M-|\mathbb{B}_R| \right) + 2 \mathcal{L}(u,u)\,.
\end{align*}
Since $\phi(u)\ge 0$, $\phi''(1)\ge 0$ and $m<2$, assertion~\eqref{n5} follows. 
\end{proof}


We next exploit further the radial symmetry of elements in $\Sigma_a$ to derive additional estimates.

\begin{lem}\label{Lx}
Let $a>0$, $M=a|\mathbb{B}_R|$ and consider $u\in \Sigma_a$. Then
\begin{equation}\label{n8}
	|\partial_r \phi'(u(r))| \le \frac{\|u\|_\infty}{d}r\,,\quad r\in (0,R]\,,
\end{equation}
and
\begin{equation}\label{n9}
	|\phi'(u(r))-\langle\phi'(u)\rangle| \le \frac{M}{\sigma_d(m-1)}\left(\frac{r^{2-d}}{d}+\frac{R^{2-d}}{2}\right)\,,\quad r\in (0,R]\,.
\end{equation}
Moreover,
\begin{equation}\label{n6}
	0<u(r)\le  \left( a R^d + \frac{a^2 R^d r^2}{d^2 m}\right) r^{-d}\,,\quad r\in (0,R]\,,
\end{equation}
and
\begin{align}\label{n7a}
	\partial_r u(r)\le  \frac{a}{dm} u(r) r\,,\quad r\in (0,R]\,.
\end{align}
In fact,
\begin{equation}\label{n7}
	|\partial_r u(r)| \le \min\{u(r),u(r)^{2-m}\}\|u\|_\infty\frac{r}{dm}\,,\quad r\in (0,R]\,,
\end{equation}
and 
\begin{equation}\label{n7x}
	|\partial_r u(r)| \le \frac{M}{m\sigma_d}\min\{u(r),u(r)^{2-m}\}r^{1-d}\,,\quad r\in (0,R]\,.
\end{equation}
\end{lem}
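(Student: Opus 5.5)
We work in the radial variable and set $\psi:=\phi'(u)$. Since $u\in\Sigma_a$ is radially symmetric, the equation $-\Delta\psi=u-a$ reads $\partial_r\big(r^{d-1}\partial_r\psi\big)=-r^{d-1}(u-a)$. Integrating from $0$ to $r$ gives the identity
\begin{equation*}
r^{d-1}\partial_r\psi(r)=\frac{a r^d}{d}-\int_0^r s^{d-1}u(s)\,\rd s\,,\qquad r\in(0,R]\,,
\end{equation*}
and every estimate in the lemma should follow from it. We use three facts: $u>0$, $a=\langle u\rangle\le\|u\|_\infty$, and $\int_0^r s^{d-1}u\,\rd s\le M/\sigma_d=aR^d/d$ by~\eqref{eqi}. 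The two terms on the right-hand side have opposite signs, so $|\cdot|$ is bounded by the larger of them. This gives $-\|u\|_\infty r/d\le\partial_r\psi\le ar/d$, which is~\eqref{n8}. It also gives $|\partial_r\psi(r)|\le \frac{M}{\sigma_d}r^{1-d}$.

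For~\eqref{n9}, we integrate this last bound between $r$ and $s$. Since $m-1=(d-2)/d$, we have $1/(d-2)=1/(d(m-1))$, and hence
\begin{equation*}
|\psi(r)-\psi(s)|\le\frac{M}{d\sigma_d(m-1)}\big(r^{2-d}+s^{2-d}\big)\,.
\end{equation*}
We then average in $s$ against the probability weight $d s^{d-1}/R^d$. Using $\frac{d}{R^d}\int_0^R s\,\rd s=\frac{d}{2}R^{2-d}$, this yields exactly~\eqref{n9}.

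For the gradient bounds on $u$, we use $\partial_r\psi=\phi''(u)\partial_ru=m(1+u)^{m-1}\partial_r u/u$. This is legitimate because $u>0$, and $\psi\in W^2_p$ by the definition of $\Sigma_a$. Thus
\begin{equation*}
\partial_r u=\frac{u\,\partial_r\psi}{m(1+u)^{m-1}}\,.
\end{equation*}
Since $(1+u)^{m-1}\ge\max\{1,u^{m-1}\}$, we get $|\partial_r u|\le\min\{u,u^{2-m}\}|\partial_r\psi|/m$. Combining this with the two bounds on $|\partial_r\psi|$ gives~\eqref{n7} and~\eqref{n7x}. For~\eqref{n7a}, we split by sign. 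Where $\partial_r\psi\le0$ we have $\partial_r u\le0$, which is enough. Where $\partial_r\psi>0$, we use $\partial_r\psi\le ar/d$ together with $(1+u)^{m-1}\ge1$.

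The step I expect to need the most care is the pointwise decay~\eqref{n6}, because the constant is fairly tight. The plan is as follows. Fix $r$ and integrate~\eqref{n7a} over $[s,r]$; this gives $u(s)\ge u(r)\exp\big(-b(r^2-s^2)\big)$ for $s\le r$, with $b=a/(2dm)$. Then
\begin{equation*}
\frac{aR^d}{d}=\frac{M}{\sigma_d}\ge\int_0^r s^{d-1}u(s)\,\rd s\ge u(r)\frac{r^d}{d}\int_0^r\frac{d s^{d-1}}{r^d}e^{-b(r^2-s^2)}\,\rd s\,.
\end{equation*}
We bound the last average from below by Jensen's inequality. The weighted mean of $r^2-s^2$ equals $2r^2/(d+2)$, so the average is at least $\exp\big(-2br^2/(d+2)\big)\ge\big(1+\frac{ar^2}{dm(d+2)}\big)^{-1}$. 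This yields
\begin{equation*}
u(r)\le aR^d\Big(1+\frac{ar^2}{dm(d+2)}\Big)r^{-d}\,,
\end{equation*}
which implies~\eqref{n6} because $d(d+2)\ge d^2$. Note that the cruder estimate $s\mapsto e^{-br^2}$ would only give a constant of order $1/(2dm)$ in front of $ar^2$, which is too weak. That is why the Jensen step is needed.
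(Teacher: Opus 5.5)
Your proposal is correct. For \eqref{n8}, \eqref{n9}, \eqref{n7}, \eqref{n7x} and \eqref{n7a} you follow the paper's argument essentially verbatim. You integrate the radial equation once and bound $r^{d-1}\partial_r\phi'(u)$ from both sides using $u>0$, $a\le\|u\|_\infty$ and the mass. You then average the bound on $|\phi'(u(r))-\phi'(u(s))|$ against $d s^{d-1}/R^d$. Finally you convert through $\partial_r\phi'(u)=\phi''(u)\partial_r u$ and $(1+u)^{m-1}\ge\max\{1,u^{m-1}\}$.

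The only genuine difference is in \eqref{n6}.

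The paper argues additively. It writes $r^d u(r)=\int_0^r\big(d s^{d-1}u(s)+s^d\partial_r u(s)\big)\,\rd s$ and bounds the first integral by $dM/\sigma_d=aR^d$. It then inserts \eqref{n7a} into the second integral and uses $s^{d+1}\le r^2 s^{d-1}$ to get $\int_0^r s^d\partial_r u\,\rd s\le \frac{ar^2}{dm}\cdot\frac{aR^d}{d}$. This gives the constant $1/(d^2m)$ in two lines, with no exponentials and no convexity.

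You argue multiplicatively. You integrate \eqref{n7a} to get the Gronwall-type lower bound $u(s)\ge u(r)e^{-b(r^2-s^2)}$ on $[0,r]$, feed it into the mass constraint, and return to a polynomial bound via Jensen and $e^{-x}\ge(1+x)^{-1}$. Your computations check out; in particular the weighted mean of $r^2-s^2$ is indeed $2r^2/(d+2)$. Your route even gives the slightly sharper constant $1/(d(d+2)m)$.

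However, your remark that the Jensen step is ``needed'' only holds within your multiplicative approach. The paper's product-rule identity shows that the stated constant is not delicate: it follows from \eqref{n7a} by a direct linear estimate.
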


\begin{proof}
Since $u$ solves~\eqref{x0b} and is radially symmetric,
\begin{equation*}
	-\partial_r\left(r^{d-1}\partial_r\phi'(u(r))\right)= r^{d-1} [u(r)-a]\,,\quad r\in (0,R]\,,
\end{equation*}
so that integration yields
\begin{align}\label{g1}
	-r^{d-1}\partial_r\phi'(u(r)) = \int_0^r s^{d-1} u(s)\,\rd s-\frac{a}{d}r^d \,,\quad r\in [0,R] \,.
\end{align}
On the one hand, since $u$ and $a$ are positive, we obtain from~\eqref{g1} that
\begin{align}\label{i9}
	-\frac{a}{d}r^d\le -r^{d-1}\partial_r\phi'(u(r))\le  \frac{ \|u\|_\infty }{d} r^d \,,\quad r\in [0,R] \,;
\end{align}
that is,
\begin{align*}
\vert \partial_r \phi'(u(r))\vert \le \frac{\|u\|_\infty}{d}r\,,\quad r\in [0,R] \,,
\end{align*}
since $a\le\|u\|_\infty$. This is~\eqref{n8}. On the other hand, we can use $\|u\|_1=M$ and the positivity of $u$ to derive from~\eqref{g1} that, for $r\in (0,R]$,
\begin{equation*}
	- \frac{M}{\sigma_d} = - \frac{aR^d}{d} \le - \frac{ar^d}{d} \le -r^{d-1}\partial_r\phi'(u(r)) \le \int_0^R s^{d-1} u(s)\,\rd s = \frac{M}{\sigma_d}\,,
\end{equation*}
hence
\begin{equation}\label{po}
	|\partial_r \phi'(u(r))| \le  \frac{M}{\sigma_d}r^{1-d}\,,\quad r\in (0,R]\,.
\end{equation} 
Next, we use~\eqref{po} to obtain, for $r\in (0,R]$,
\begin{align*}
	|\phi'(u(r))-\langle\phi'(u)\rangle| &= \left| \frac{d}{R^d}\int_0^R\big(\phi'(u(r))-\phi'(u(s))\big)\, s^{d-1}\,\rd s \right|\\
	&= \left| \frac{d}{R^d}\int_0^R\int_s^r \partial_r\phi'(u(s_*))\,\rd s_*\, s^{d-1}\,\rd s \right|\\
	&\le \frac{dM}{R^d\sigma_d} \int_0^R \left| \int_s^r s_*^{1-d}\,\rd s_*\right| \, s^{d-1}\,\rd s\\
	&\le  \frac{dM}{R^d\sigma_d(d-2)} \int_0^R\left(  r^{2-d}+s^{2-d} \right)\, s^{d-1}\,\rd s\\
	&= \frac{dM}{R^d\sigma_d(d-2)} \left(\frac{r^{2-d}R^d}{d}+\frac{R^2}{2}\right)\,,
\end{align*}
which is~\eqref{n9}. 

Noticing that
\begin{equation}\label{g2}
	\partial_r\phi'(u(r))=\phi''(u(r))\partial_r u(r)=m\frac{(1+u(r))^{m-1}}{u(r)}\partial_r u(r)\,,\quad r\in (0,R]\,,
\end{equation}
we derive from~\eqref{n8} that
\begin{equation*}
	|\partial_r u(r)|\le \frac{u(r)}{(1+u(r))^{m-1}}\|u\|_\infty\frac{r}{dm}\,,\quad r\in (0,R]\,,
\end{equation*}
and from~\eqref{po}  that
\begin{equation*}
	|\partial_r u(r)| \le \frac{u(r)}{(1+u(r))^{m-1}}\frac{M}{m\sigma_d}r^{1-d}\,,\quad r\in (0,R]\,.
\end{equation*}
Since $m>1$, we obtain~\eqref{n7} and~\eqref{n7x}. 

Finally, from~\eqref{i9}, \eqref{g2}, and the positivity of $u$ we infer that, for $r\in (0,R]$,
\begin{equation*}
	\partial_r u(r)\le \frac{a}{dm}\frac{u(r)}{(1+u(r))^{m-1}} r\le  \frac{a}{dm} u(r) r\,,\quad r\in (0,R]\,,
\end{equation*}
which is~\eqref{n7a}, and hence
\begin{equation*}
	\int_0^r s^{d}\partial_r u(s)\,\rd s\le  \frac{ar^2}{dm}\int_0^r s^{d-1} u(s)\,\rd s\le  \frac{a^2 R^d r^2}{d^2 m} \,,\quad r\in (0,R]\,,
\end{equation*}
 since $\|u\|_1=M=a|\Omega|$. Therefore,
\begin{equation*}
	r^{d}u(r) =\int_0^r\left(ds^{d-1}u(s)+s^{d}\partial_r u(s)\right)\, \rd s\le  a R^d + \frac{a^2 R^d r^2}{d^2 m} \,,\quad r\in (0,R]\,,
\end{equation*}
and hence~\eqref{n6}. This proves the lemma.
\end{proof}

The following result is at the heart of the analysis. Recall that $\zeta$ is defined in~\eqref{b0}.

\begin{prop}\label{P20}
Let $a>0$, $M=a|\mathbb{B}_R|$ and consider a sequence $(u_l)_{l\ge 1}$ in $\Sigma_a$  such that $\lambda_l:=\|u_l\|_\infty^{1/d}\to \infty$ as $l\to\infty$. For $l\ge 1$, set 
\begin{equation}\label{t77}
	P_l(x):=\frac{1}{\lambda_l^d}u_l\left(\frac{x}{\lambda_l}\right)\,,\quad x\in \bar{\mathbb{B}}_{\lambda_l R}\,,
\end{equation} 
and
\begin{equation}\label{t7}
	\varphi_l(x):=\frac{1}{\lambda_l^{d-2}}\phi'\left(u_l\right)\left(\frac{x}{\lambda_l}\right)\,,\quad x\in \bar{\mathbb{B}}_{\lambda_l R}\,.
\end{equation}
Then there are $P\in C(\mathbb{R}^d)$ and $\varphi\in C^2(\mathbb{R}^d)$ such that, up to a subsequence, 
\begin{equation*}
	P_l\rightarrow P \ \text{ in }\ C(\mathbb{R}^d)\,,\qquad \varphi_l\rightarrow \varphi \ \text{ in }\ C^2(\mathbb{R}^d)\,.
\end{equation*}
Both $P$  and $\varphi$ are non-increasing with respect to $r=|x|$ with $P(0)=1$, and $P$ has compact support $\bar{\mathbb{B}}_{\rho_0}$ for some $\rho_0\in (0,\infty)$. Furthermore, $\varphi$ satisfies
\begin{equation}\label{varphi3}
	-\Delta \varphi=\left(\frac{m-1}{m}\right)^{1/(m-1)}\varphi_+^{1/(m-1)}\ \text{ in }\ \mathbb{R}^d
\end{equation}
with $\varphi_+=\max\{\varphi,0\}$, 
\begin{subequations}\label{420}
\begin{equation}
	P^{m-1} = \dfrac{m-1}{m} \varphi_+ \;\;\text{ in }\;\; \mathbb{R}^d\,, \label{420a}
\end{equation} 
and
\begin{equation}\label{420b}
 	\varphi(x)=\left\{\begin{array}{ll}
	\dfrac{m}{m-1} \rho_0^{2-d} \zeta\left(\dfrac{x}{\rho_0}\right)\,, & x\in \bar{\mathbb{B}}_{\rho_0}\,,\\[20pt]
	\dfrac{M_c}{\sigma_d}\dfrac{|x|^{2-d}-\rho_0^{2-d}}{d-2} \,, & x\in \R^d\setminus\bar{\mathbb{B}}_{\rho_0}\,.
\end{array}\right.
\end{equation}
\end{subequations}
Finally, it holds that $M\ge M_c$, the critical mass $M_c$ being defined in~\eqref{Mc}.
\end{prop}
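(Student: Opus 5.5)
The plan is a blow-up analysis. I would first derive uniform local bounds on $(P_l,\varphi_l)$ from Lemma~\ref{Lx}, then pass to the limit in the rescaled elliptic equation, and finally identify the limit profile with the rescaled $\zeta$ from~\eqref{b0}.

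\textbf{Rescaled equation and uniform bounds.} Since $u_l$ solves $-\Delta\phi'(u_l)=u_l-a$ and $\lambda_l^{d(m-1)}=\lambda_l^{d-2}$, a direct computation gives
\begin{equation*}
-\Delta\varphi_l=P_l-a\lambda_l^{-d}\quad\text{in }\mathbb{B}_{\lambda_l R}\,,\qquad 0\le P_l\le 1\,.
\end{equation*}
Rescaling~\eqref{n8} yields $|\nabla\varphi_l(x)|\le |x|/d$. Rescaling~\eqref{n7} yields $|\nabla P_l(x)|\le P_l(x)|x|/(dm)\le |x|/(dm)$. Rescaling~\eqref{n6} yields $P_l(x)\le C|x|^{-d}$ for $|x|\ge1$, with $C$ independent of $l$. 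Consequently, any maximum point $x_l$ of $P_l$ (where $P_l(x_l)=1$) stays bounded. Rescaling~\eqref{n7a} gives $\partial_r P_l\le a\lambda_l^{-d}P_l\,r/(dm)$, so $P_l$ is almost non-increasing, and combined with the boundedness of $x_l$ this gives $P_l(0)\to1$. Next, $\varphi_l(0)=\lambda_l^{2-d}\phi'(\lambda_l^d P_l(0))\to m/(m-1)$. I use here the behaviour $\phi'(z)=\frac{m}{m-1}z^{m-1}+O(z^{m-2}+\log z)$ as $z\to\infty$, which follows from~\eqref{phi} and should be available in Appendix~A. Together with the Lipschitz bound on $\varphi_l$, this shows that $\varphi_l$ is bounded on compacts.

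\textbf{Compactness and the limit relations.} Since the right-hand side of the rescaled equation is bounded and locally Lipschitz, interior $W^2_q$ and then Schauder estimates give local $C^{2,\alpha}$ bounds on $\varphi_l$. Arzel\`a--Ascoli and a diagonal argument then give $P_l\to P$ locally uniformly and $\varphi_l\to\varphi$ in $C^2_{loc}$, with
\begin{equation*}
-\Delta\varphi=P\,,\qquad P(0)=1\,.
\end{equation*}
Passing to the limit in the rescaled~\eqref{n7a} shows that $P$ is non-increasing in $r$. Passing to the limit in the rescaled~\eqref{g1} gives $-r^{d-1}\partial_r\varphi=\int_0^r s^{d-1}P(s)\,\rd s\ge0$, so $\varphi$ is non-increasing as well. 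For~\eqref{420a} I split by the sign of $P$:
\begin{itemize}
\item Where $P(x)>0$, we have $\lambda_l^dP_l\to\infty$, and the asymptotics of $\phi'$ give $\varphi=\frac{m}{m-1}P^{m-1}$.
\item Where $P(x)=0$, the bound $\phi'(z)\le \frac{m}{m-1}\big((1+z)^{m-1}-1\big)$ gives $\limsup\varphi_l(x)\le0$.
\end{itemize}
Together these yield~\eqref{420a}, and hence~\eqref{varphi3}. By Fatou's lemma, $\int_{\mathbb{R}^d}P\,\rd x\le\liminf\|P_l\|_{L_1(\mathbb{B}_{\lambda_lR})}=M$.

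\textbf{Identification of the profile.} I need $\varphi$ to vanish somewhere. If $\varphi>0$ everywhere, then $\varphi$ would be a positive entire radial solution of $-\Delta\varphi=c\,\varphi^{d/(d-2)}$. The exponent $d/(d-2)$ is subcritical, so this is impossible by the Gidas--Spruck Liouville theorem. Alternatively, for radial solutions, the integrability $P\in L_1$ together with the ODE, and the fact that $d/(d-2)$ is the Serrin exponent, excludes this case. Let $\rho_0$ be the first zero of $\varphi$; since $\varphi$ is non-increasing, $\mathrm{supp}\,P=\bar{\mathbb{B}}_{\rho_0}$. On $\mathbb{B}_{\rho_0}$, the function $\frac{m-1}{m}\rho_0^{d-2}\varphi(\rho_0\,\cdot)$ solves~\eqref{b1}. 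By uniqueness it equals $\zeta$, which gives the first line of~\eqref{420b}. Then $P=\rho_0^{-d}\zeta^{1/(m-1)}(\cdot/\rho_0)$, and by~\eqref{b2x} we get $\int P\,\rd x=M_c$. Outside $\bar{\mathbb{B}}_{\rho_0}$, $\varphi$ is radial and harmonic with $\varphi(\rho_0)=0$ and flux $-r^{d-1}\partial_r\varphi=M_c/\sigma_d$. This gives the second line of~\eqref{420b}, and the Fatou bound yields $M\ge M_c$.

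The step I expect to be the main obstacle is controlling $\varphi_l$ where $P_l$ is small. There the logarithmic singularity of $\phi'$ at $0$ must be shown to disappear after multiplication by $\lambda_l^{2-d}$, and this is exactly what the pointwise bound $\varphi\le0$ on $\{P=0\}$ requires. I would use~\eqref{n1} and~\eqref{n9}, rescaled, for this: together they bound $\lambda_l^{2-d}\langle\phi'(u_l)\rangle$ and keep $\varphi_l$ bounded from below on compacts.
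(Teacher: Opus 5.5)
Your proposal is correct and follows the same blow-up architecture as the paper: uniform bounds from Lemma~\ref{Lx}, $P_l(0)\to1$ and $\varphi_l(0)\to m/(m-1)$, local bounds on $\varphi_l$ from $|\partial_r\varphi_l|\le r/d$, elliptic compactness, $-\Delta\varphi=P$, Gidas--Spruck to force a zero of $\varphi$, and a mass comparison. There are three local differences.

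\begin{enumerate}
\item You obtain $P_l(0)\to1$ by confining the maximizers of $P_l$ to a fixed ball via the rescaled~\eqref{n6}, then using the rescaled~\eqref{n7a}. The paper instead integrates $\partial_r(\lambda_l^{-d}+P_l)^m\le ar/(d\lambda_l^d)$, a consequence of~\eqref{i9}, over all of $[0,\lambda_l R]$. This needs no localization of the maximizer and also gives monotonicity of $P$ and $\varphi$ in the limit.
\item You get $\varphi\le0$ on $\{P=0\}$ from a pointwise upper bound on $\phi'$. The paper first proves $\mathrm{supp}\,P=\bar{\mathbb{B}}_{\rho_0}$ and then uses $\varphi(\rho_0)=0$ and monotonicity.
\item You identify the profile by hand. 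You rescale $\varphi$ on $\mathbb{B}_{\rho_0}$ to a positive radial solution of~\eqref{b1} and invoke uniqueness of the positive solution $\zeta$. You then solve the exterior radial harmonic problem with $\varphi(\rho_0)=0$ and flux $-r^{d-1}\partial_r\varphi=\int_0^{\rho_0}s^{d-1}P\,\rd s=M_c/\sigma_d$. The paper simply cites \cite[Theorem~1]{WY2003}. Your route is more elementary and self-contained under radial symmetry; the citation is shorter.
\end{enumerate}

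Some details need fixing.

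\begin{itemize}
\item The inequality $\phi'(z)\le\frac{m}{m-1}\big((1+z)^{m-1}-1\big)$ is false for large $z$. The difference of the two sides has derivative $m(1+z)^{m-2}/z>0$ on $[1,\infty)$ and tends to $m\int_0^1(t^{1-m}-1)(1+t)^{m-2}\,\rd t>0$. Use instead $\phi'<0$ on $(0,1)$ and $\phi'(z)\le\frac{m}{m-1}z^{m-1}+m\log z$ for $z\ge1$, which follows from subadditivity of $s\mapsto s^{m-1}$. Together with $P_l\le1$ this gives $\varphi_l\le\frac{m}{m-1}P_l^{m-1}+md\lambda_l^{2-d}\log\lambda_l$, hence $\limsup_l\varphi_l\le0$ on $\{P=0\}$. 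Your own asymptotic for $\phi'$ already suffices here.
\item Rescaling~\eqref{n7} gives $|\nabla P_l(x)|\le P_l(x)^{2-m}|x|/(dm)$, using the $u^{2-m}$ branch and $d(2-m)=2$. It does not give $P_l(x)|x|/(dm)$. That stronger bound cannot hold uniformly in $l$: in the limit, Gronwall would force $P>0$ everywhere. The consequence $|\nabla P_l(x)|\le|x|/(dm)$, which is what you actually use, is correct.
\item The rescaled~\eqref{n7a} carries the factor $\lambda_l^{-2}$, not $\lambda_l^{-d}$; this is harmless.
\item The ``main obstacle'' you anticipate does not arise. The local lower bound on $\varphi_l$ already follows from $\varphi_l(0)\to m/(m-1)$ and $|\nabla\varphi_l|\le|x|/d$. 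The bound $\varphi\le0$ on $\{P=0\}$ needs only an upper bound on $\phi'$, and there the logarithmic singularity at $0$ has the favourable sign. So~\eqref{n1} and~\eqref{n9} are not needed.
\end{itemize}
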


\begin{proof}
Let $l\ge 1$. Since $u_l$ belongs to $\Sigma_a$, we infer from~\eqref{n1} and~\eqref{t77} that 
\begin{align}\label{n46}
	0< P_l(x)\le \|P_l\|_{L_\infty(\mathbb{B}_{\lambda_l R})} = 1\,,\quad x\in \bar{\mathbb{B}}_{\lambda_l R}\,,\quad \text{and}\quad \|P_l\|_{L_1(\mathbb{B}_{\lambda_l R})} = M\,.
\end{align}
We divide the proof into three steps.

\smallskip

\noindent\textbf{Step~1: Pointwise estimates on $(P_l)_{l\ge 1}$ and $(\varphi_l)_{l\ge 1}$.} 
It first follows from~\eqref{n7} that
\begin{equation}\label{n50}
 	|\partial_r P_l(r)| = \frac{1}{\lambda_l^{d+1}} \left| \partial_r u_l\left(\frac{r}{\lambda_l}\right)\right| \le \frac{1}{\lambda_l^{d+2}} \left[ u_l\left(\frac{r}{\lambda_l}\right) \right]^{2-m} \|u_l\|_\infty \frac{r}{dm}\le\frac{r}{dm} 
\end{equation}
for $r\in [0,\lambda_lR]$, since $\lambda_l=\|u_l\|_\infty^{1/d}$ and $m=2-2/d$.
Next, note from~\eqref{i9} that
\begin{equation}\label{t1}
	\partial_r\phi'(u_l)(r)\le \frac{ar}{d}  \,,\quad r\in [0,R] \,.
\end{equation}
Therefore, in view of~\eqref{phi},
\begin{align}
	\partial_r\big(\lambda_l^{-d}+P_l(r)\big)^{m} &= \frac{m}{\lambda_l^{d+1}} \left(\lambda_l^{-d}+\lambda_l^{-d}u_l\left(\frac{r}{\lambda_l}\right)\right)^{m-1} \partial_r u_l\left(\frac{r}{\lambda_l}\right)\nonumber\\
	&=\frac{1}{\lambda_l^{2d-1}} \phi''\left(u_l\right)\left(\frac{r}{\lambda_l}\right) u_l\left(\frac{r}{\lambda_l}\right) \partial_ru_l\left(\frac{r}{\lambda_l}\right)\nonumber\\
	&=\frac{1}{\lambda_l^{2d-1}} \partial_r\phi'\left(u_l\right) \left(\frac{r}{\lambda_l}\right) u_l\left(\frac{r}{\lambda_l}\right)\le \frac{ar}{d\lambda_l^d} \label{t2}
\end{align}
and
\begin{equation}\label{t5}
	\partial_r\varphi_l(r) = \frac{1}{\lambda_l^{d-1}} \partial_r\phi'(u_l) \left(\frac{r}{\lambda_l}\right) \le \frac{ar}{d\lambda_l^d}  
\end{equation}
for $r\in [0,\lambda_l R] $.
Integration of~\eqref{t2} and~\eqref{t5} with respect to $r$ now yields
\begin{equation}\label{Ps1}
 	\big(\lambda_l^{-d}+P_l(r_2)\big)^{m}- \big(\lambda_l^{-d}+P_l(r_1)\big)^{m}\le \frac{a(r_2^2-r_1^2)}{2d\lambda_l^d }  \,,\qquad 0\le r_1\le r_2\le \lambda_lR\,,
\end{equation}
and
\begin{equation}\label{Ps2}
	\varphi_l(r_2)-\varphi_l(r_1)\le \frac{a(r_2^2-r_1^2)}{2d\lambda_l^d }  \,,\qquad 0\le r_1\le r_2\le \lambda_lR\,,
\end{equation}
respectively. Moreover, it follows from~\eqref{Ps1} (with $r_1=0$ and $r_2=r$) that, for $r\in [0,\lambda_lR]$,
\begin{align*}
	\big(\lambda_l^{-d} + P_l(r)\big)^{m} \le \frac{a r^2}{2d\lambda_l^d } + \big(\lambda_l^{-d} + P_l(0)\big)^{m} \le \frac{a R^2}{2d\lambda_l^{d-2}} + \big(\lambda_l^{-d} + P_l(0)\big)^{m}\,,
\end{align*} 
which we combine with~\eqref{n46} to obtain
\begin{equation*}
    \big(\lambda_l^{-d}+1\big)^{m} = \big(\lambda_l^{-d} + \|P_l\|_{L_\infty(\mathbb{B}_{\lambda_l R})}\big)^{m}  \le \frac{a R^2}{2d\lambda_l^{d-2}} + \big(\lambda_l^{-d} + P_l(0)\big)^{m} \,.
\end{equation*}
The subadditivity of $z\mapsto z^{1/m}$ then entails that
\begin{equation*}
    \lambda_l^{-d}+1\le \lambda_l^{-d}+P_l(0)+\left(\frac{a R^2}{2d\lambda_l^{d-2}} \right)^{1/m}\,,
\end{equation*}
from which we deduce, using once more~\eqref{n46}, that
\begin{equation}\label{6.x}
    1\le P_l(0)+\left(\frac{a R^2}{2d\lambda_l^{d-2}} \right)^{1/m}\le 1+\left(\frac{a R^2}{2d\lambda_l^{d-2}} \right)^{1/m}\,.
\end{equation}
We next turn to the  connection between $P_l$ and $\varphi_l$. We use~\eqref{phi},~\eqref{t77}, and~\eqref{t7} to derive
\begin{align*}
	\varphi_l(r)& = \frac{1}{\lambda_l^{d-2}} \int_{1}^{u_l(r/\lambda_l)} \phi''(s)\,\rd s = \frac{m}{\lambda_l^{d-2}} \int_{\lambda_l^{-d}}^{P_l(r)} \frac{\left(1+\lambda_l^d s \right)^{m-1}}{s}\,\rd s
\end{align*}
for $r\in [0,\lambda_l R]$, hence
\begin{equation}\label{6.y}
    \varphi_l(r)=m \int_{\lambda_l^{-d}}^{P_l(r)} \frac{\left(\lambda_l^{-d}+s\right)^{m-1}}{s}\,\rd s\,,\quad r\in [0,\lambda_l R]\,.
\end{equation}
Combining~\eqref{6.x},~\eqref{6.y}, and the subadditivity of $z\mapsto z^{m-1}$ gives
\begin{align*}
    \varphi_l(0) & \ge m \int_{\lambda_l^{-d}}^{P_l(0)}s^{m-2} \,\rd s=\frac{m}{m-1}\big(P_l(0)-\lambda_l^{-d}\big)^{m-1}\\
    & \ge \frac{m}{m-1} \big(P_l(0)^{m-1}-\lambda_l^{-d(m-1)}\big)\,,
\end{align*}
hence
\begin{align}\label{6.zl}
\varphi_l(0)&\ge   \frac{m}{m-1}\left(1-\left(\frac{a R^2}{2d\lambda_l^{d-2}} \right)^{1/m} - \frac{1}{\lambda_l^{d-2}} \right) \,.
\end{align}
Similarly, we infer from~\eqref{n46},~\eqref{6.y}, and the subadditivity of $z\mapsto z^{m-1}$ that, for $r\in [0,\lambda_l R]$,
\begin{align*}
\varphi_l(r)& \le m \int_{\lambda_l^{-d}}^{1} \left(\frac{\lambda_l^{-d(m-1)}}{s}+s^{m-2}\right) \,\rd s \\
& =\frac{m}{\lambda_l^{d-2}}\left(-\log\lambda_l^{-d}\right)+\frac{m}{m-1}\left(1-\lambda_l^{-d(m-1)}\right)
\end{align*}
and thus
\begin{equation}\label{6.zu}
    \varphi_l(r) \le \frac{m}{m-1}\left(1-\frac{1}{{\lambda_l^{d-2}}}\right)+\frac{md\log\lambda_l}{\lambda_l^{d-2}}\,,\quad r\in [0,\lambda_l R]\,.
\end{equation}
In particular, we deduce from~\eqref{6.zl} and~\eqref{6.zu} (with $r=0$) that there is $l_*>0$ such that
\begin{equation}\label{6.zz}
    \frac{m}{2(m-1)}\le \varphi_l(0)\le \frac{2m}{m-1}\,,\quad l\ge l_* \,.
\end{equation}
Furthermore, it follows from~\eqref{n8} for $r\in [0,\lambda_l R]$ that
\begin{align*}
    |\partial_r\varphi_l(r)|&=\frac{1}{\lambda_l^{d-1}}\left\vert\partial_r\phi'(u_l)\left(\frac{r}{\lambda_l}\right)\right\vert \le \frac{1}{\lambda_l^{d-1}} \frac{\|u_l\|_\infty}{d}\frac{r}{\lambda_l}=\frac{r}{d}\,,
\end{align*}
so that
\begin{align*}
    |\varphi_l(r)|&=\left|\varphi_l(0)+\int_0^r\partial_r\varphi(s)\,\rd s\right|\le \left|\varphi_l(0)\right|+\int_0^r\frac{s}{d}\,\rd s\le \left|\varphi_l(0)\right|+\frac{r^2}{2d}\,.
\end{align*}
Combining now this estimate with~\eqref{6.zz} yields
\begin{equation}\label{6.A}
    |\varphi_l(r)|\le\frac{2m}{m-1}+\frac{r^2}{2d}\,,\quad r\in [0,\lambda_l R]\,,\quad l\ge l_* \,.
\end{equation}

Finally, we note from~\eqref{x0b} that
\begin{equation}\label{t3}
	-\Delta \varphi_l = P_l - \frac{a}{\lambda_l^d}\ \text{ in }\ \mathbb{B}_{\lambda_l R}\,.
\end{equation}

\noindent\textbf{Step~2: Convergence of $(P_l)_{l\ge 1}$ and $(\varphi_l)_{l\ge 1}$.} Given $R_0>0$ and $l_0\ge l_*$ such that $\lambda_l R\ge 2R_0$ for $l\ge l_0$, we conclude from~\eqref{n46} and~\eqref{n50} that the sequence $(P_l)_{l\ge l_0}$ is bounded in $W_\infty^1(\mathbb{B}_{R_0})$. The Arzel\`a-Ascoli theorem then ensures the existence of a non-negative function $P\in C(\mathbb{R}^d)$ such that, up to a subsequence,
\begin{equation}\label{N51}
	P_l\to P \ \text { in }\ C(\mathbb{R}^d)
\end{equation}
as $l\to \infty$ (i.e. locally uniformly in $\mathbb{R}^d$).  

Similarly, given $R_0>0$, the right-hand side of the elliptic equation~\eqref{t3} is bounded in $W_\infty^1(\mathbb{B}_{2R_0})$ in view of~\eqref{n46}, \eqref{n50}, and the inequality
\begin{equation*}
	a = \frac{\|u_l\|_1}{|\mathbb{B}_R|} \le \|u_l\|_\infty = \lambda_l^d\,,
\end{equation*}
while $(\varphi_l)_{l\ge l_0}$ is bounded in $L_\infty(\mathbb{B}_{2R_0})$ according to~\eqref{6.A}. It thus follows from \cite[Theorems~9.11 and~9.19]{GT2001} that $(\varphi_l)_{l\ge l_0}$ is bounded in $W_q^3(\mathbb{B}_{R_0})$ for any $q\in (1,\infty)$. In particular, there exists $\varphi\in C^2(\mathbb{R}^d)$ such that, up to a subsequence, 
\begin{equation}\label{N51x}
 	\varphi_l\rightarrow \varphi \ \text{ in }\ C^2(\mathbb{R}^d)\,.
\end{equation}

\noindent\textbf{Step~3: Identifying $P$ and $\varphi$.}
Combining \eqref{t3},~\eqref{N51},  and~\eqref{N51x} implies that~$\varphi$ is a classical solution to 
\begin{equation}\label{t4}
	-\Delta \varphi=P \ \text{ in }\ \R^d\,.
\end{equation}
Next,  letting $l\to \infty$ in the inequalities \eqref{Ps1} and \eqref{Ps2} and using the convergences~\eqref{N51} and~\eqref{N51x} ensure that $P^m$ and $\varphi$, and hence also $P$, are non-increasing. Moreover, since $d\ge 3$, letting $l\to \infty$ in the inequalities~\eqref{6.zl} and \eqref{6.zu} (for $r=0$) with the help of~\eqref{N51} and~\eqref{N51x} gives $P(0)=1$ and $\varphi(0)=m/(m-1)$. In fact, for each $r>0$ with $P(r)>0$ we may let $l\to \infty$ in the identity~\eqref{6.y} to obtain
\begin{equation}\label{t8}
	\varphi(r) = m\int_{0}^{P(r)} s^{m-2}\,\rd s=\frac{m}{m-1} P^{m-1}(r)\,.
\end{equation}
To identify the positivity set of $P$ on which~\eqref{t8} is valid, we recall that $P$ is continuous and non-increasing on $[0,\infty)$ with $P(0)=1$, so that, either $P(r)>0$ for every $r\in [0,\infty)$, or there is $\rho_0\in (0,\infty)$ such that $\mathrm{supp}(P)=\bar{\mathbb{B}}_{\rho_0}$. We shall rule out the former. Assume for contradiction that $P>0$ on $\mathbb{R}^d$. Then~\eqref{t4} and~\eqref{t8} imply that $\varphi$ is a positive (classical) solution to
\begin{equation*}
	-\Delta \varphi=\left(\frac{m-1}{m}\right)^{1/(m-1)} \varphi^{1/(m-1)}\ \text{ in }\ \R^d\,,
\end{equation*}
which is impossible according to \cite[Theorem~1.1]{GS1981} since 
\begin{equation*}
    \frac{1}{m-1} = \frac{d}{d-2} < \frac{d+2}{d-2}\,.
\end{equation*}
Consequently, there is $\rho_0\in (0,\infty)$ such that $\mathrm{supp}(P)=\bar{\mathbb{B}}_{\rho_0}$, and~\eqref{t8} gives
\begin{equation*}
	P(x)=\left(\frac{m-1}{m}\right)^{1/(m-1)} \varphi^{1/(m-1)}(x)\,,\quad x\in \bar{\mathbb{B}}_{\rho_0}\,.
\end{equation*}
In addition, the monotonicity and continuity of $\varphi$, along with~\eqref{t8}, entail that
\begin{equation*}
	0\le \varphi_+(x)\le \varphi_+\left(\rho_0\frac{x}{|x|}\right)=0=\frac{m}{m-1}P(x)\,,\quad x\in \R^d\setminus\bar{\mathbb{B}}_{\rho_0}\,.
\end{equation*}
We infer from the previous two identities that $m P^{m-1}/(m-1) = \varphi_+$ in $\mathbb{R}^d$, an identity which we combine with~\eqref{t4} to obtain~\eqref{varphi3}.
Now, invoking~\cite[Theorem~1]{WY2003}, we deduce from~\eqref{b2x},~\eqref{b1}, and \eqref{varphi3} that $\varphi$ is given by~\eqref{420b}. Recalling~\eqref{t8}, we have thus established~\eqref{420}.

Finally, since $u_l\in\Sigma_a$ for all $l\ge 1$ with $M=a|\mathbb{B}_R|$, we deduce from~\eqref{b2x},~\eqref{420} and~\eqref{N51} that
\begin{align*}
	M & = \lim_{l\to\infty}\int_{\mathbb{B}_{\lambda_l R}}P_l(x)\,\rd x \ge \lim_{l\to\infty}\int_{\mathbb{B}_{\rho_0}} P_l(x)\,\rd x \\ 
	& = \int_{\mathbb{B}_{\rho_0}} P(x)\,\rd x = \frac{1}{\rho_0^{d}} \int_{\mathbb{B}_{\rho_0}} \zeta^{1/(m-1)}\left(\dfrac{x}{\rho_0}\right)\,\rd x = M_c\,,
\end{align*}
which completes the proof of Proposition~\ref{P20}.
\end{proof}

In fact, we shall establish equality $M=M_c$ for a sequence of radially symmetric stationary states $(u_l,w_l)_{l\ge 1}$ in $\mathcal{S}_{M,rad}$ on which the Lyapunov functional $\mathcal{L}$ tends to $-\infty$. This will require rather precise local estimates in order to avoid that the rescaled sequence $(P_l)_{l\ge 1}$ ``loses mass at infinity''.

To this end, we first establish  a ``sup+inf''-type inequality based on Proposition~\ref{P20}. The arguments are adapted from \cite[Lemma~3]{LS1994} and \cite[Lemma~3]{WY2003}.

\begin{lem}\label{L25}
Let $a>0$ and 
\begin{equation}
	C_1\ge C_1^* := \dfrac{2\sigma_d(d-2)}{M_c} \left(\dfrac{m\zeta(0)}{m-1} + 1 \right)\,. \label{C1*} 
\end{equation}
Then there is $C_2>0$ such that, for any $u\in\Sigma_a$,
\begin{equation*}
	\phi'(u(0)) + C_1 \inf_{\mathbb{B}_\rho} \phi'(u) \le C_2\rho^{2-d}\,,\quad \rho\in (0,R]\,.
\end{equation*}
\end{lem}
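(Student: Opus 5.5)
We argue by contradiction, following the scheme of \cite[Lemma~3]{LS1994} and \cite[Lemma~3]{WY2003}. Suppose that the claim fails for some $C_1\ge C_1^*$. Then there are sequences $(u_l)_{l\ge1}$ in $\Sigma_a$ and $(\rho_l)_{l\ge1}$ in $(0,R]$ such that
\begin{equation*}
\rho_l^{d-2}\Big(\phi'(u_l(0)) + C_1 \inf_{\mathbb{B}_{\rho_l}} \phi'(u_l)\Big)\to\infty\,.
\end{equation*}
The plan has three parts: show that $\lambda_l:=\|u_l\|_\infty^{1/d}\to\infty$, show that $\rho_l\lambda_l\to\infty$, and then obtain a contradiction from the limit profile identified in Proposition~\ref{P20}.

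\textbf{Blow-up of the sequence.} By \eqref{n9} and the bound $M\langle\phi'(u_l)\rangle\le C$, both $\phi'(u_l)-\langle\phi'(u_l)\rangle$ and $\langle\phi'(u_l)\rangle$ are controlled by $C(1+\rho^{2-d})$ on $\mathbb{B}_R\setminus\mathbb{B}_\rho$. This requires an upper bound on $\langle\phi'(u)\rangle$ valid on all of $\Sigma_a$. I would obtain it from \eqref{n3}, \eqref{L20a} and $m<2$ in the same way as \eqref{n5}, but without appealing to $\mathcal{L}$. If that route is unavailable, I would instead combine Jensen's inequality with the monotonicity \eqref{n7a}.

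Since $\phi'$ is increasing, $\inf_{\mathbb{B}_{\rho}}\phi'(u)$ is bounded above by the value at any point of $\mathbb{B}_\rho$. In particular, $\phi'(u)(\rho/2)\le C\rho^{2-d}$ by \eqref{n9}, and the same bound holds for $\phi'(u(0))$ as long as $\|u_l\|_\infty$ stays bounded. Hence, were $\lambda_l$ bounded, the left-hand side would be $O(\rho_l^{2-d})$, contradicting the assumption. So $\lambda_l\to\infty$ (along a subsequence), and Proposition~\ref{P20} applies to the rescalings $P_l$ and $\varphi_l$.

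\textbf{Rescaling.} In rescaled variables, $\phi'(u_l(0))=\lambda_l^{d-2}\varphi_l(0)$ and $\inf_{\mathbb{B}_{\rho_l}}\phi'(u_l)=\lambda_l^{d-2}\inf_{\mathbb{B}_{\lambda_l\rho_l}}\varphi_l$. Setting $s_l:=\lambda_l\rho_l$, the quantity to control becomes
\begin{equation*}
s_l^{d-2}\Big(\varphi_l(0)+C_1\inf_{\mathbb{B}_{s_l}}\varphi_l\Big)\,,
\end{equation*}
which must tend to $+\infty$. Since $\varphi_l$ is radially non-increasing (by \eqref{Ps2}, up to an $o(1)$ error), the infimum is essentially $\varphi_l(s_l)$. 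By \eqref{6.A}, if $(s_l)$ were bounded then the expression would stay bounded. Hence $s_l\to\infty$, and the contradiction must come from the behaviour of $\varphi_l(s_l)$ for $s_l\to\infty$.

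\textbf{Main obstacle.} The limit \eqref{420b} gives
\begin{equation*}
\varphi(s)\to -\frac{M_c\rho_0^{2-d}}{\sigma_d(d-2)}\quad\text{as } s\to\infty\,,
\end{equation*}
while $\varphi_l(0)\to m/(m-1)$. This matches the choice of $C_1^*$: with $\rho_0\ge1$ (which should follow from $P(0)=1$ and $\zeta$'s normalization), $\varphi(0)+C_1\lim_{s\to\infty}\varphi(s)<0$. However, locally uniform convergence does not give $\varphi_l(s_l)\le -c$ for $s_l\to\infty$, so a uniform estimate at large radii is required. For this I would integrate the radial identity \eqref{g1} in rescaled form:
\begin{equation*}
-r^{d-1}\partial_r\varphi_l(r)=\int_0^r s^{d-1}P_l(s)\,\rd s-\frac{a r^d}{d\lambda_l^d}\,.
\end{equation*}
Fix $r_0>\rho_0$. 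For $r\ge r_0$, the mass term is at least $\int_{\mathbb{B}_{r_0}}P/\sigma_d-o(1)\approx M_c/\sigma_d$, while the correction term is $O(r^d\lambda_l^{-d})$, which is small for $r\le s_l\le\lambda_l R$ provided it is compared against $\varphi$ rather than absorbed crudely. Integrating from $r_0$ to $s_l$ then yields
\begin{equation*}
\varphi_l(s_l)\le\varphi_l(r_0)-\frac{M_c-o(1)}{\sigma_d(d-2)}\big(r_0^{2-d}-s_l^{2-d}\big)+O\big(s_l^2\lambda_l^{-d}\big)\,.
\end{equation*}
Together with $\varphi_l(r_0)\to\varphi(r_0)$, this gives $\limsup_l\varphi_l(s_l)\le\varphi(\infty)+\varepsilon$ uniformly in $l$, and choosing $C_1\ge C_1^*$ makes the bracket eventually negative. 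That contradicts its divergence to $+\infty$.

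The delicate point I expect is the error term $as_l^2/(2d\lambda_l^d)$ when $s_l$ is comparable to $\lambda_l R$. Since $\lambda_l^{d-2}\gg1$ only for $d\ge3$, I would handle it using $d\ge3$, and by using the constant $+1$ in $C_1^*$ to absorb a bounded error after multiplying by $s_l^{d-2}\le(\lambda_lR)^{d-2}$. If necessary, I would enlarge $C_2$ to cover the finitely many remaining cases.
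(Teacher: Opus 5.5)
Your argument is correct in substance, but it reaches the contradiction by a harder route than the paper.

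\textbf{The final step.} The paper never evaluates $\varphi_l$ at the outer radius $s_l=\lambda_l\rho_l$. Once $\lambda_l\rho_l\to\infty$, the ball $\mathbb{B}_{\rho_1/\lambda_l}$, with the \emph{fixed} rescaled radius $\rho_1=2^{1/(d-2)}\rho_0$, lies inside $\mathbb{B}_{\rho_l}$. Hence $\inf_{\mathbb{B}_{\rho_l}}\phi'(u_l)\le\inf_{\mathbb{B}_{\rho_1/\lambda_l}}\phi'(u_l)$. At that fixed radius, the locally uniform convergence $\varphi_l\to\varphi$ from Proposition~\ref{P20} already gives $\varphi_l(0)+C_1\inf_{\partial\mathbb{B}_{\rho_1}}\varphi_l\le-\rho_0^{2-d}/2$. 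This is where the constants in \eqref{C1*} come from: the factor $2$ compensates for $\varphi(\rho_1)$ being only half of $\lim_{s\to\infty}\varphi(s)$, and the $+1$ gives the strict margin $-\rho_0^{2-d}$. So the ``main obstacle'' you single out is self-imposed.

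Your workaround nevertheless works. Integrate the rescaled radial identity from a fixed $r_0>\rho_0$ to $s_l$, using $P_l\ge0$, $\sigma_d\int_0^{r_0}s^{d-1}P_l\,\rd s\to M_c$ and $s_l\le\lambda_lR$. This gives
\[
\varphi_l(s_l)\le\varphi_l(r_0)-\frac{M_c-o(1)}{\sigma_d(d-2)}\big(r_0^{2-d}-s_l^{2-d}\big)+\frac{aR^2}{2d\lambda_l^{d-2}}\,,
\]
hence $\limsup_l\varphi_l(s_l)\le-M_c\rho_0^{2-d}/(\sigma_d(d-2))$. The bracket is then eventually negative. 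This route even buys a better constant: it works for any $C_1>\frac{\sigma_d(d-2)}{M_c}\frac{m\zeta(0)}{m-1}$, i.e.\ below $C_1^*/2$, at the price of the extra ODE integration.

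Three of your worries are unfounded:
\begin{enumerate}
\item The error term is not delicate; it is $O(\lambda_l^{2-d})$.
\item No absorption after multiplying by $s_l^{d-2}$, and no enlargement of $C_2$, is needed. A negative bracket already contradicts divergence to $+\infty$.
\item The assumption $\rho_0\ge1$ is irrelevant, because $\varphi(0)=\frac{m}{m-1}\zeta(0)\rho_0^{2-d}$ and $\lim_{s\to\infty}\varphi(s)$ carry the same factor $\rho_0^{2-d}$.
\end{enumerate}

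\textbf{The blow-up step.} Here you rely on an upper bound for $\langle\phi'(u)\rangle$ on $\Sigma_a$ that you do not derive. The computation behind \eqref{n5} leaves $2\mathcal{L}(u,u)$ on the right-hand side, and $\mathcal{L}$ is not controlled on $\Sigma_a$ a priori. Such a bound does hold, via \eqref{n9}, \eqref{L20b} and the mass constraint, but you do not need it. Since $0\in\mathbb{B}_{\rho_l}$, one has $\inf_{\mathbb{B}_{\rho_l}}\phi'(u_l)\le\phi'(u_l(0))$, so $(1+C_1)\rho_l^{d-2}\phi'(u_l(0))\to\infty$. Combined with \eqref{L20b}, this yields both $\lambda_l\to\infty$ and $\lambda_l\rho_l\to\infty$ at once, as in the paper. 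That makes your separate appeal to \eqref{6.A} superfluous.
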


\begin{proof}
Assume for contradiction that there exist $C_1\ge C_1^*$ and a sequence $(u_l,r_l)_{l\ge 1}$ in $\Sigma_a\times (0,R]$ such that
\begin{equation}\label{st2}
	\phi'(u_l(0))+C_1\inf_{\mathbb{B}_{r_l}} \phi'(u_l)\ge l r_l^{2-d}>0, \quad l\ge 1\,.
\end{equation}
Then, since $0\in \mathbb{B}_{r_l}$, $ r_l\in (0,R]$, and $d\ge 3$, it follows that
\begin{equation}\label{0}
	\lim_{l\to\infty} r_l^{d-2} \phi'(u_l(0)) = \lim_{l\to\infty} \phi'(u_l(0)) = \infty\,.
\end{equation}
Now, $\phi'$ being negative on $(0,1)$ according to~\eqref{phi}, we infer from~\eqref{L20b} and~\eqref{0} that $\lim\limits_{l\to\infty}u_l(0)=\infty$ and hence $\lambda_l^d:=\|u_l\|_\infty\to \infty$. In particular, there is $l_1\ge 1$ such that $1\le u_l(0)\le \lambda_l^d$ for $l\ge l_1$, and~\eqref{L20b} then implies that
\begin{equation*}
	\phi'(u_l(0)) \le \frac{m 2^{m-1}}{m-1} \lambda_l^{d(m-1)}\,, \quad l\ge l_1\,,
\end{equation*}
hence
\begin{equation*}
	r_l^{d-2}\phi'(u_l(0))\le \frac{m 2^{m-1}}{m-1} (r_l\lambda_l)^{d-2}\,, \quad l\ge l_1\,.
\end{equation*}
Invoking again~\eqref{0} yields 
\begin{equation}\label{0x}
	\lim_{l\to\infty} r_l\lambda_l=\infty\,.
\end{equation}
Moreover, introducing
\begin{equation*}
	\varphi_l(x):=\frac{1}{\lambda_l^{d-2}}\phi'\left(u_l\right)\left(\frac{x}{\lambda_l}\right)\,,\quad x\in \bar{\mathbb{B}}_{\lambda_l R}\,,
\end{equation*}
we deduce from Proposition~\ref{P20} that there exist $\rho_0\in (0,\infty)$ and a subsequence of $(\varphi_l)_{l\ge 1}$ (not relabeled) such that $\varphi_l\rightarrow \varphi \ \text{ in }\ C^2(\mathbb{R}^d)$ as $l\to\infty$, and $\varphi\in C^2(\mathbb{R}^d)$ is given by~\eqref{420}. In particular, for $x\in\mathbb{R}^d\setminus\mathbb{B}_{\rho_1}$ with $\rho_1>\rho_0$ yet to be determined, the representation formula~\eqref{420b} yields
\begin{align*}
	\varphi(0) + C_1 \varphi(x) & = \dfrac{m}{m-1} \dfrac{\zeta(0)}{\rho_0^{d-2}} + \dfrac{C_1 M_c}{\sigma_d (d-2)} \left( \frac{1}{|x|^{d-2}} - \frac{1}{\rho_0^{d-2}} \right)\\
	& \le \dfrac{m}{m-1} \dfrac{\zeta(0)}{\rho_0^{d-2}}  - \dfrac{C_1 M_c}{\sigma_d(d-2)} \frac{1}{2\rho_0^{d-2}} \\
	& \quad + \dfrac{C_1 M_c}{\sigma_d(d-2)} \left( \frac{1}{|x|^{d-2}} - \frac{1}{2\rho_0^{d-2}} \right) \\
	& \le \left( \dfrac{2m \zeta(0)}{m-1} - \dfrac{C_1 M_c}{\sigma_d(d-2)} \right) \frac{1}{2\rho_0^{d-2}} \\
	& \quad  + \dfrac{C_1 M_c}{\sigma_d(d-2)} \left( \frac{1}{\rho_1^{d-2}} - \frac{1}{2\rho_0^{d-2}} \right)\,.
\end{align*}
Therefore, recalling~\eqref{C1*} and taking $x\in\partial\mathbb{B}_{\rho_1}$ with $\rho_1:=2^{1/(d-2)}\rho_0>\rho_0$, we obtain
\begin{equation*}
	\varphi(0)+C_1 \inf_{\partial\mathbb{B}_{\rho_1}} \varphi \le -\frac{1}{\rho_0^{d-2}}\,.
\end{equation*}
Owing to~\eqref{0x} and the convergence of $(\varphi_l)_{l\ge 1}$ toward $\varphi$ in $C\left(\bar{\mathbb{B}}_{\rho_1}\right)$, we then find some $l_2\ge l_1$ such that $\min\{r_l,R\}>\rho_1/\lambda_l$ for $l\ge l_2$ and
\begin{equation*}
	\varphi_l(0) + C_1 \inf_{\partial\mathbb{B}_{\rho_1}} \varphi_l \le -\frac{1}{2\rho_0^{d-2}}\,, \quad l\ge l_2\,.
\end{equation*}
In fact, since
\begin{equation*}
	-\Delta\left( \varphi_l(x) - \dfrac{a|x|^2}{2d\lambda_l^d} \right)\ge 0\ \text{ in }\ \mathbb{B}_{\lambda_lR}
\end{equation*}
according to~\eqref{t3} and since $\mathbb{B}_{\rho_1}\subset \mathbb{B}_{\lambda_lR}$ for $l\ge l_2$, we have
\begin{equation*}
	\inf_{\mathbb{B}_{\rho_1}} \left( \varphi_l - \dfrac{a|\cdot|^2}{2d\lambda_l^d} \right) = \inf_{\partial\mathbb{B}_{\rho_1}} \left( \varphi_l - \dfrac{a|\cdot|^2}{2d\lambda_l^d} \right) = \inf_{\partial\mathbb{B}_{\rho_1}} \varphi_l - \dfrac{a\rho_1^2}{2d\lambda_l^d}
\end{equation*}
due to \cite[Theorem~2.3]{GT2001} for $l\ge l_2$. Consequently,
\begin{align*}
	\varphi_l(0) + C_1 \inf_{\mathbb{B}_{\rho_1}}\varphi_l & \le \varphi_l(0) + C_1 \inf_{\mathbb{B}_{\rho_1}} \left( \varphi_l - \dfrac{a|\cdot|^2}{2d\lambda_l^d} \right) + \dfrac{a\rho_1^2}{2d\lambda_l^d} \\
	&= \varphi_l(0)+C_1 \inf_{\partial\mathbb{B}_{\rho_1}} \varphi_l \le -\frac{1}{2\rho_0^{d-2}}
\end{align*}
for $l\ge l_2$. By definition of $\varphi_l$, this entails that
\begin{equation}\label{r4}
	\phi'(u_l(0)) + C_1 \inf_{\mathbb{B}_{\rho_1/\lambda_l}} \phi'(u_l)  \le - \frac{1}{2} \left( \frac{\lambda_l}{\rho_0} \right)^{d-2}\,,\quad l\ge l_2\,.
\end{equation}
In particular, since $\mathbb{B}_{\rho_1/\lambda_l}\subset \mathbb{B}_{r_l}$ for $l\ge l_2$, we conclude from~\eqref{r4} that
\begin{equation*}
	\phi'(u_l(0)) + C_1 \inf_{\mathbb{B}_{r_l}} \phi'(u_l) \le - \frac{1}{2} \left( \frac{\lambda_l}{\rho_0} \right)^{d-2}\,,\quad l\ge l_2\,,
\end{equation*}
which contradicts~\eqref{st2}. This proves the claim.
\end{proof}

Based on the just derived inequality and Harnack's inequality, we now establish a local estimate for functions in $\Sigma_a$, adapting an argument designed in \cite[Proposition~4]{WY2003}, see also \cite[Lemma~2]{LS1994}.

\begin{lem}\label{L28}
Let $a>0$. There is $\delta^*\in (0,1)$ depending only on $d$, $\zeta$, $a$, and $R$ with the following property: for any $\delta\in (0,\delta^*]$, there is $C_3(\delta)>0$ depending only on $d$, $\zeta$, $a$, $R$ and $\delta$ such that, for all $u\in \Sigma_a$,
\begin{equation*}
	\phi'(u(\rho)) \le C_3(\delta) \rho^{2-d} - \delta \phi'(u(0)) \,,\quad \rho\in (0,R/2]\,.
\end{equation*}
\end{lem}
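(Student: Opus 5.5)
Combine the ``sup+inf'' inequality of Lemma~\ref{L25} with a Harnack inequality for the superharmonic part of $\phi'(u)$ on annuli. Fix $u\in\Sigma_a$ and $\rho\in(0,R/2]$. By~\eqref{x0b}, the function
\begin{equation*}
h(x):=\phi'(u(x))-\frac{a|x|^2}{2d}
\end{equation*}
satisfies $-\Delta h = u\ge 0$ in $\mathbb{B}_R$, so $h$ is superharmonic. Write $\mathbb{B}_\rho$-infima and the value on $\partial\mathbb{B}_\rho$ in terms of $h$; the correction $a|x|^2/(2d)\le aR^2/(2d)$ is bounded and will be absorbed in $C_3(\delta)\rho^{2-d}$, since $\rho^{2-d}\ge R^{2-d}$. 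By radial symmetry and the minimum principle (\cite[Theorem~2.3]{GT2001}), $\inf_{\mathbb{B}_\rho} h=\min_{\partial\mathbb{B}_\rho}h=h(\rho)$. Moreover, $h$ is radially non-increasing: from~\eqref{g1}, $-r^{d-1}\partial_r h=\int_0^r s^{d-1}u\,\rd s\ge 0$. Hence $\phi'(u(\rho))$ is comparable to $\inf_{\mathbb{B}_\rho}\phi'(u)$ up to the bounded term. Lemma~\ref{L25} with $C_1=C_1^*$ then gives
\begin{equation*}
C_1^*\,h(\rho)\le C_2\rho^{2-d}+C-\phi'(u(0)).
\end{equation*}
This already yields the claim with $\delta=1/C_1^*$ when $h(\rho)\ge0$, after dividing by $C_1^*$.

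If $h(\rho)<0$, we cannot divide naively, and the sign of $\phi'(u(0))$ matters. First, if $\phi'(u(0))\le 0$, then $u(0)\le 1$ and, by monotonicity of $h$, $\phi'(u)$ is bounded above on $\mathbb{B}_R$ by $aR^2/(2d)$. The claim then holds trivially once $\delta|\phi'(u(0))|$ is controlled. For this I use~\eqref{n9} and~\eqref{n5}-type bounds; more simply, since $u(0)=\|u\|_\infty\ge a$, we get $\phi'(u(0))\ge\phi'(a)$, which is bounded below. So assume $\phi'(u(0))>0$. Dividing the displayed inequality by $C_1^*$ gives
\begin{equation*}
h(\rho)\le (C_2\rho^{2-d}+C)/C_1^*-\phi'(u(0))/C_1^*
\end{equation*}
regardless of the sign of $h(\rho)$, because $C_1^*>0$. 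So in fact no case split is needed: with $\delta^*:=\min\{1/C_1^*,1/2\}$, any $\delta\le\delta^*$ works. This is because $-\phi'(u(0))/C_1^*\le-\delta\phi'(u(0))$ when $\phi'(u(0))\ge0$. When $\phi'(u(0))<0$, the bound $\phi'(u(0))\ge\phi'(a)$ gives $-\delta\phi'(u(0))\le\delta|\phi'(a)|$, while $\phi'(u(\rho))\le\phi'(u(0))<0\le C_3\rho^{2-d}$, which settles that case.

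The main point to verify carefully is that $h(\rho)$ really equals $\inf_{\mathbb{B}_\rho}h$, i.e.\ the monotonicity of $h$ from~\eqref{g1}. The comparison $\inf_{\mathbb{B}_\rho}\phi'(u)\ge h(\rho)$ (note the sign, since $\phi'(u)\ge h$ pointwise) must be used in the right direction. Indeed, Lemma~\ref{L25} gives an upper bound on $C_1^*\inf_{\mathbb{B}_\rho}\phi'(u)$, and $\inf\phi'(u)\ge\inf h=h(\rho)$, so $h(\rho)$ inherits the upper bound. Then $\phi'(u(\rho))=h(\rho)+a\rho^2/(2d)$, and the last term is $\le aR^2/(2d)\le (aR^d/(2d))\rho^{2-d}$. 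If this direct route is somehow insufficient (e.g.\ if the constants must be uniform in a way requiring a Harnack step on the annulus $\mathbb{B}_{2\rho}\setminus\mathbb{B}_{\rho/2}$, as in \cite[Proposition~4]{WY2003}), I would apply the weak Harnack inequality for the nonnegative superharmonic function $h-\min_{\partial \mathbb{B}_{2\rho}}h$ to compare $h(\rho)$ with averages. I expect this to be unnecessary, thanks to radial monotonicity.
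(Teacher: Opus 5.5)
Your argument is correct in substance and shorter than the paper's. Your computation from \eqref{g1} is right: $h=\phi'(u)-a|x|^2/(2d)$ satisfies $-r^{d-1}\partial_r h=\int_0^r s^{d-1}u\,\rd s\ge 0$. This gives $\phi'(u(\rho))\le \inf_{\mathbb{B}_\rho}\phi'(u)+a\rho^2/(2d)$, and together with Lemma~\ref{L25} that is all that is needed.

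There is a slip in your treatment of the case $\phi'(u(0))<0$. You use $u(0)=\|u\|_\infty$ and $\phi'(u(\rho))\le\phi'(u(0))$, and neither is justified. Only $h$ is radially non-increasing, not $u$ or $\phi'(u)$: near the origin $\partial_r\phi'(u)\approx (a-u(0))r/d$. This is also why the paper's Corollary~\ref{XX} locates $\max u_l$ at some $r_l$ rather than at $0$.

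The repair is already in your own text. Monotonicity of $h$ gives $\phi'(u(\rho))\le\phi'(u(0))+a\rho^2/(2d)<\frac{aR^d}{2d}\rho^{2-d}$. Since $-\delta\phi'(u(0))>0$ in this case, no lower bound on $\phi'(u(0))$ is needed. Simpler still, drop the case split: for each $\delta\in(0,\delta^*]$ apply Lemma~\ref{L25} with $C_1:=1/\delta\ge C_1^*$. This yields $\phi'(u(\rho))\le\big(\delta C_2+\frac{aR^d}{2d}\big)\rho^{2-d}-\delta\phi'(u(0))$ directly, which is how the paper concludes (with $C_1=c_3/\delta$).

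The real difference from the paper lies in the middle step. The paper rescales to the annulus $\mathbb{B}_2\setminus\bar{\mathbb{B}}_{1/2}$ and subtracts the Dirichlet solution $W$ of $-\Delta W=U-a\rho^d$. It uses \eqref{n6} (with \eqref{L20b}) to bound $U$ and $V$ there, so that $a\rho^d+2^{d-2}c_2+W-V$ is a non-negative harmonic function. Harnack's inequality then gives $\sup_{\partial\mathbb{B}_\rho}\phi'(u)\le c_3\inf_{\partial\mathbb{B}_\rho}\phi'(u)+c_5\rho^{2-d}$ with $c_3\in(0,1)$. Only after that does it apply the minimum principle for $h$ on $\mathbb{B}_\rho$, exactly as you do.

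For radial $u$, $\phi'(u)$ is constant on $\partial\mathbb{B}_\rho$, so that Harnack estimate holds trivially with $c_3=1$, $c_5=0$. Your proof is the paper's with this step collapsed. The paper's version follows the non-radial blow-up analysis of \cite{LS1994,WY2003} and does not use constancy on spheres, so it is closer to a template that could be adapted without radial symmetry (given a substitute for \eqref{n6}). Yours is much shorter and dispenses with $W$ and Harnack's inequality, so the weak-Harnack fallback in your last paragraph is unnecessary.
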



\begin{proof}
Let $a>0$ and $u\in\Sigma_a$. Fix $\rho\in (0,R/2]$ and define
\begin{equation*}
	V(x) := \rho^{d-2} \phi'(u)(\rho x)\,,\qquad U(x):=\rho^{d}u(\rho x)\,, \quad x\in \mathbb{B}_{R/\rho} \,.
\end{equation*}
Then $-\Delta V=U-a\rho^d$ in $\mathbb{B}_{R/\rho}$ by Proposition~\ref{L19}. Note that 
\begin{equation*}
	\Omega_0:=\mathbb{B}_2\setminus{\bar{\mathbb{B}}}_{1/2}\subset \mathbb{B}_{R/\rho}
\end{equation*}
and consider the solution $W\in W_p^2(\Omega_0)$ to
\begin{equation*}
	-\Delta W=U-a\rho^d\ \text{ in }\ \Omega_0\,,\qquad W=0\ \text{ on }\ \partial\Omega_0\,.
\end{equation*}
Due to~\eqref{n6}, there is a constant $c_1=c_1(a,R)>0$ such that
\begin{equation}\label{j1}
	|U(x)|\le c_1 |x|^{-d}\le 2^d c_1\,,\quad x\in \bar{\Omega}_0\,.
\end{equation}
Setting
\begin{equation*}
	\Theta(x):=2^d c_1\left(1-\frac{|x|^2}{2d}\right)-W(x)\,,\quad  x\in\bar{\Omega}_0\,,
\end{equation*}
the above analysis guarantees that $\Theta$ satisfies
\begin{equation*}
	-\Delta \Theta = 2^d c_1 - U + a\rho^d \ge 0\ \text{ in }\ \Omega_0\,,\qquad \Theta\ge 2^d c_1\left(1-\frac{2}{d}\right) \ge 0 \ \text{ on }\ \partial\Omega_0\,,
\end{equation*}
so that the comparison principle entails that $\Theta\ge 0$ in $\bar{\Omega}_0$; that is,
\begin{align}\label{j3a}
	W(x)\le 2^d c_1\left(1-\frac{|x|^2}{2d}\right)\,,\quad x\in \bar{\Omega}_0\,.
\end{align}
Similarly, setting
\begin{equation*}
	\Xi(x):=W(x) +a\rho^d\left(1-\frac{|x|^2}{2d}\right)\,,\quad  x\in\bar{\Omega}_0\,,
\end{equation*}
we have
\begin{equation*}
	-\Delta \Xi =U\ge 0\ \text{ in }\ \Omega_0\,,\qquad \Xi\ge a\rho^d\left(1-\frac{2}{d}\right)\ge 0 \ \text{ on }\ \partial\Omega_0\,,
\end{equation*}
hence  
\begin{align}\label{j3b}
	W(x)\ge - a\rho^d\left(1-\frac{|x|^2}{2d}\right)\,,\quad x\in \bar{\Omega}_0\,,
\end{align}
again by the comparison principle. Next, recalling that $\phi'<0$ in $(0,1)$, we infer from~\eqref{L20b} and~\eqref{n6} that (using the identity $d(m-1)=d-2$)
\begin{align*}
	V(x)&=\rho^{d-2}\phi'(u)(\rho x))\le \frac{m2^{m-1}}{m-1} \rho^{d-2} u^{m-1}(\rho x)\le c_2 |x|^{2-d}\,, \quad  x\in \mathbb{B}_{R/\rho}\,,
\end{align*}
for some constant $c_2=c_2(a,R)>0$ independent of $\rho$. Therefore, 
\begin{equation}\label{j4}
	V(x)\le 2^{d-2} c_2\,,\quad x\in \bar{\Omega}_0\,.
\end{equation}
Combining now~\eqref{j3b} and~\eqref{j4}, we deduce that $a\rho^d + 2^{d-2} c_2 + W - V$ is a non-negative harmonic function in $\Omega_0$. Applying Harnack's inequality \cite[Theorem~2.5]{GT2001} for 
\begin{equation*}
	\Omega_1:=\mathbb{B}_{3/2}\setminus\bar{\mathbb{B}}_{3/4}\Subset \mathbb{B}_2\setminus\bar{\mathbb{B}}_{1/2}=\Omega_0
\end{equation*}
yields a constant $c_3\in (0,1)$ depending only on $d$ such that
\begin{equation*}
	\sup_{\Omega_1} \left( a\rho^d + 2^{d-2} c_2 + W - V \right) \le \frac{1}{c_3} \inf_{\Omega_1} \left( a\rho^d + 2^{d-2} c_2 + W - V \right)\,;
\end{equation*}
that is,
\begin{equation*}
	\sup_{\Omega_1} \left(V-W\right) \le  c_3 \inf_{\Omega_1} \left(V-W\right) + (1-c_3)\left( a\rho^d + 2^{d-2} c_2 \right)\,.
\end{equation*}
This inequality, together with~\eqref{j3a}, implies that, for $x\in\Omega_1$,
\begin{equation*}
	V(x) = V(x)-W(x) + W(x) \le c_3 \inf_{\Omega_1} \left(V-W\right) + (1-c_3)\left( a\rho^d + 2^{d-2} c_2 \right) + 2^d c_1
\end{equation*}
and thus, setting $c_4 = c_4(a,R) := (1-c_3)\left( a R^d + 2^{d-2} c_2 \right)+ 2^d c_1$,
\begin{equation}\label{jg1}
	\sup_{\Omega_1}  V\le c_3 \inf_{\Omega_1} \left(V-W\right) + c_4\,,
\end{equation}
while, using \eqref{j3b}, we get that, for $x\in\Omega_1$,
\begin{equation*}
	V(x) = V(x) - W(x) + W(x) \ge \inf_{\Omega_1} \left(V-W\right) -a \rho^d
\end{equation*}
and hence
\begin{equation}\label{jg2}
	\inf_{\Omega_1} \left(V-W\right)\le \inf_{\Omega_1}V +a \rho^d\,.
\end{equation}
Combining~\eqref{jg1} and~\eqref{jg2} yields 
\begin{equation*}
	\sup_{\Omega_1}  V\le c_3 \inf_{\Omega_1} V + c_5 \;\;\text{ with }\; c_5=c_5(a,R) := c_3 a R^d + c_4\,.
\end{equation*}
In particular, since $\partial\mathbb{B}_1\subset \Omega_1$, we deduce  that
\begin{equation*}
	\sup_{\partial\mathbb{B}_1}  V \le \sup_{\Omega_1}  V\le c_3 \inf_{\Omega_1} V + c_5 \le c_3 \inf_{\partial\mathbb{B}_1}V + c_5\,.
\end{equation*}
Consequently, recalling the definition of $V$, we conclude that
\begin{equation}\label{28}
	\sup_{\partial\mathbb{B}_\rho} \phi'(u) \le c_3 \inf_{\partial\mathbb{B}_\rho} \phi'(u) +  c_5 \rho^{2-d}\,,\quad \rho\in (0,R/2]\,.
\end{equation}
To finish off the proof we observe that $\phi'(u)-a|\cdot|^2/2d$ is a superharmonic function in~$\mathbb{B}_\rho$ due to~\eqref{x0b} and the non-negativity of $u$, since
\begin{equation*}
	-\Delta\left( \phi'(u)-\frac{a|x|^2}{2d} \right) = -\Delta \phi'(u)+a = u\ge 0 \ \text{ in }\ \mathbb{B}_R\,,
\end{equation*}
so that~\cite[Theorem~2.3]{GT2001} ensures that
\begin{equation*}
	\inf_{\partial\mathbb{B}_\rho} \left( \phi'(u) - \frac{a|\cdot|^2}{2d} \right) = \inf_{\mathbb{B}_\rho} \left( \phi'(u)-\frac{a|\cdot|^2}{2d} \right)\,.
\end{equation*}
This equality implies that
\begin{align*}
	\inf_{\partial\mathbb{B}_\rho} \phi'(u) & = \inf_{\partial\mathbb{B}_\rho} \left( \phi'(u) - \frac{a|\cdot|^2}{2d} \right) + \frac{a\rho^2}{2d}\\
	& = \inf_{\mathbb{B}_\rho} \left( \phi'(u) - \frac{a|\cdot|^2}{2d} \right) + \frac{a\rho^2}{2d} \le \inf_{\mathbb{B}_\rho} \phi'(u) + \frac{a\rho^2}{2d}\,,
\end{align*}
which, combined with~\eqref{28}, entails that
\begin{equation} \label{j78}
	\sup_{\partial\mathbb{B}_\rho} \phi'(u)\le c_3 \inf_{\mathbb{B}_\rho} \phi'(u) + c_3 \frac{a\rho^2}{2d} + c_5 \rho^{2-d}\,,\quad \rho\in (0,R/2]\,.
\end{equation}
At this point, we set $\delta^* := c_3/C_1^*$, see~\eqref{C1*}, and $C_1=c_3/\delta$. Then $C_1\ge C_1^*$ and we infer from Lemma~\ref{L25} and~\eqref{j78} that there is $C_2>0$ (depending on $C_1$ and thus on $d$, $\zeta$, $a$, $R$, and $\delta$) such that
\begin{align*}
	\phi'(u)(\rho) & = \sup_{\partial\mathbb{B}_\rho} \phi'(u) \le \frac{c_3}{C_1} \left( C_2\rho^{2-d}-\phi'(u(0)) \right) + c_3 \frac{a\rho^2}{2d} + c_5 \rho^{2-d} \\
	& \le \left( \frac{c_3 C_2}{C_1} + \frac{a R^d c_3}{2d} + c_5 \right) \rho^{2-d} - \delta \phi'(u(0))
\end{align*}
for $\rho\in (0,R/2]$, and the proof is complete.
\end{proof}

Thanks to Lemma~\ref{L28}, we can now prove that an unbounded sequence of functions in $\Sigma_a$ concentrates at the origin and identify the corresponding space scale.

\begin{cor}\label{XX}
Let $a>0$ and consider a sequence $(u_l)_{l\ge 1}$ in $\Sigma_a$ such that $\lambda_l^d:=\|u_l\|_\infty\to \infty$.
Then there exists a constant $\kappa>0$ depending on $d$, $a$, and $R$ such that
\begin{equation}\label{limphi}
	\lim_{l\to\infty} \left(\max_{r\in [\kappa/\lambda_l,R/2] }u_l(r)\right)=0\,.
\end{equation}
\end{cor}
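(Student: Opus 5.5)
The plan is to combine Lemma~\ref{L28} with the blow-up information at the origin. Since $u_l$ is maximal somewhere and, by Proposition~\ref{P20} (with $P(0)=1$ and \eqref{6.x}), $u_l(0)/\lambda_l^d\to 1$, we get $u_l(0)\to\infty$. Moreover $\phi'(u_l(0))\ge c\,\lambda_l^{d-2}$ for large $l$ by \eqref{6.zl}, since $\varphi_l(0)=\lambda_l^{2-d}\phi'(u_l(0))\ge m/(2(m-1))$. Fix $\delta=\delta^*$ from Lemma~\ref{L28}. For $\rho\in(0,R/2]$ this gives
\begin{equation*}
\phi'(u_l(\rho))\le C_3(\delta^*)\rho^{2-d}-\delta^*\frac{m}{2(m-1)}\lambda_l^{d-2}\,.
\end{equation*}
Choose $\kappa>0$ with $C_3(\delta^*)\kappa^{2-d}\le \delta^* m/(4(m-1))$. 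This $\kappa$ depends only on $d,\zeta,a,R$, since $\zeta$ is fixed by $d$. Then for $\rho\in[\kappa/\lambda_l,R/2]$ we have $C_3\rho^{2-d}\le C_3\kappa^{2-d}\lambda_l^{d-2}$, and hence
\begin{equation*}
\phi'(u_l(\rho))\le -\frac{\delta^* m}{4(m-1)}\lambda_l^{d-2}\longrightarrow-\infty
\end{equation*}
uniformly in $\rho$.

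It remains to convert this into $u_l(\rho)\to0$. Since $\phi'$ is increasing, with $\phi'(z)\to-\infty$ only as $z\to0^+$ (indeed $\phi'(z)\ge m\log z$ for $z<1$ by \eqref{phi}), the uniform bound $\phi'(u_l(\rho))\le -c\lambda_l^{d-2}$ gives
\begin{equation*}
u_l(\rho)\le \exp\bigl(-c\lambda_l^{d-2}/m\bigr)
\end{equation*}
up to the comparison $\phi'(z)\ge m\log z$, in the right direction: $\phi'(z)=-m\int_z^1(1+s)^{m-1}s^{-1}\,\rd s\ge -m2^{m-1}\log(1/z)$. Thus $u_l(\rho)\le \exp(-c\lambda_l^{d-2}/(m2^{m-1}))\to0$ uniformly on $[\kappa/\lambda_l,R/2]$, which is \eqref{limphi}.

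The main care point is the lower bound $\phi'(u_l(0))\gtrsim\lambda_l^{d-2}$ along the \emph{whole} sequence, not just a subsequence. I would take it directly from \eqref{6.zl}, which holds for every $l$ with explicit error terms vanishing as $\lambda_l\to\infty$. This avoids invoking the subsequence compactness of Proposition~\ref{P20}. Beyond that, one only has to check that $\kappa$ is independent of $l$, which holds because $C_3(\delta^*)$ depends only on $d,\zeta,a,R$.
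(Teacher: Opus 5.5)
Your proof is correct and follows the paper's argument: a lower bound $\phi'(u_l(0))\gtrsim\lambda_l^{d-2}$, then Lemma~\ref{L28} with $\delta=\delta^*$, a choice of $\kappa$ absorbing $C_3(\delta^*)\rho^{2-d}$ on $[\kappa/\lambda_l,R/2]$, and finally the bound $\phi'(z)\ge m2^{m-1}\log z$ on $(0,1)$. The one difference is that you take the lower bound from \eqref{6.zz} (Step~1 of Proposition~\ref{P20}, which uses no subsequence), whereas the paper derives $u_l(0)\ge\lambda_l^d/2$ from \eqref{n6} and \eqref{n7a} and then applies \eqref{L20b}; also, your parenthetical claim $\phi'(z)\ge m\log z$ for $z<1$ has the inequality reversed (since $(1+s)^{m-1}\ge 1$, one has $\phi'(z)\le m\log z$), but you end up using the correct bound.
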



\begin{proof}
Consider $l\ge 1$ and pick $r_l\in [0,R]$ such that $u_l(r_l) = \lambda_l^d = \|u_l\|_\infty$. Owing to~\eqref{n6}, we find a constant $c_6=c_6(a,R)>0$ such that $\lambda_l^d = u_l(r_l) \le c_6 r_l^{-d}$, and thus
\begin{equation}\label{h0}
	r_l\le \frac{c_6^{1/d}}{\lambda_l}\,,\quad l\ge 1\,.
\end{equation}
We then use~\eqref{n7a} and~\eqref{h0} to obtain
\begin{align*}
	\lambda_l^d - u_l(0) & = \int_0^{r_l} \partial_r u_l(r)\,\rd r \le \frac{a}{dm} \int_0^{r_l} r u(r)\,\rd r \\
	& \le \frac{a}{2dm} \|u_l\|_\infty r_l^2 \le \frac{a c_6^{2/d}}{2dm} \lambda_l^{d-2}\,.
\end{align*}
Therefore, since $\lambda_l\to\infty$,  there is $l_3\ge 1$ such that
\begin{equation*}
	\frac{\lambda_l^d}{2} \le \left( 1-\frac{ac_6^{2/d}}{2dm} \frac{1}{\lambda_l^{2}} \right) \lambda_l^d \le u_l(0)\,,\quad l\ge l_3\,.
\end{equation*}
It now follows from~\eqref{L20b} and the identity $d(m-1)=d-2$ that
\begin{equation*}
	\frac{m}{m-1}\left(2^{1-m}\lambda_l^{d-2}-1\right)\le \phi'(u_l(0)) \,,\quad l\ge l_3\,.
\end{equation*}
As $d\ge 3$ and $\lambda_l\to\infty$, there is $c_7>0$ such that, for a possibly larger choice of $l_3$, 
\begin{equation}\label{h3}
	2c_7 \lambda_l^{d-2}\le \phi'(u_l(0)) \,,\quad l\ge l_3\,.
\end{equation}
Combining~\eqref{h3} with Lemma~\ref{L28} (with $\delta=\delta^*$) ensures that, for $\rho\in (0,R/2]$ and $l\ge l_3$,
\begin{align*}
	\phi'(u_l)(\rho) & \le C_3(\delta^*) \rho^{2-d} - \delta^* \phi'(u_l(0)) \le C_3(\delta^*) \rho^{2-d} - 2 c_7 \delta^* \lambda_l^{d-2} \\
	& = - c_7 \delta^* \lambda_l^{d-2} + C_3(\delta^*) \lambda_l^{d-2} \left[ (\rho \lambda_l)^{2-d} - \frac{c_7 \delta^*}{C_3(\delta^*)} \right]\,.
\end{align*}
Setting $\kappa^{2-d} := c_7 \delta^*/C_3(\delta^*)$, we infer from the above inequality that there is $l_4\ge l_3$ such that, for $l\ge l_4$, we have $\kappa \lambda_l^{-1} \le R/2$ and
\begin{equation}\label{h4}
	\phi'(u_l)(\rho) \le - c_7 \delta^* \lambda_l^{d-2}, \quad \rho\in [\kappa/\lambda_l,R/2]\,.
\end{equation}
In particular, $u_l(\rho)<1$ for $\rho\in [\kappa/\lambda_l,R/2]$ and $l\ge l_4$ by~\eqref{phi}.

Finally, for $l\ge l_4$ and $r\in [\kappa/\lambda_l,R/2]$,
\begin{align*}
	-\phi'(u_l)(r) & = \int_{u_l(r)}^1 \phi''(s)\,\rd s = m \int_{u_l(r)}^1 \frac{(1+s)^{m-1}}{s}\,\rd s \\
	& \le m 2^{m-1} \big(-\log{(u_l(r))}\big)\,.
\end{align*}
Equivalently, thanks to~\eqref{h4}, 
\begin{equation*}
	u_l(r) \le \exp\left\{ - \frac{c_7 \delta^*}{m 2^{m-1}} \lambda_l^{d-2} \right\}\,,
\end{equation*}
hence
\begin{equation*}
	\max_{r\in [\kappa/\lambda_l,R/2] } u_l \le \exp\left\{ - \frac{c_7 \delta^*}{m 2^{m-1}} \lambda_l^{d-2} \right\}\,, \quad l\ge l_4\,.
\end{equation*}
As $d\ge 3$ and $\lambda_l\to\infty$, the claim~\eqref{limphi} readily follows from the above inequality.
\end{proof}

We are now in a position to show that the energy functional $\mathcal{L}$ is unbounded from below on the set of stationary solutions $\mathcal{S}_{M,rad}$ only in the case $M=M_c$ with $M_c$ given in~\eqref{b2x}. This provides the last piece of information required for the proof of Theorem~\ref{THM1}~\textbf{(II)}.

\begin{prop}\label{P3}
Let $M>0$ and suppose there is a sequence $(u_l,w_l)_{l\ge 1}$ in $\mathcal{S}_{M,rad}$ satisfying
\begin{equation}\label{n39}
	\lim_{l\to\infty} \mathcal{L}(u_l,w_l) = -\infty\,.
\end{equation} 
Then $M=M_c$.
\end{prop}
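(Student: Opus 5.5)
Assume $(u_l,w_l)\in\mathcal{S}_{M,rad}$ with $\mathcal{L}(u_l,u_l)\to-\infty$. By Proposition~\ref{L19}, $u_l\in\Sigma_a$ with $a=M/|\mathbb{B}_R|$.

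\textbf{Step 1: the sequence is unbounded.} If $\|u_l\|_\infty$ were bounded along a subsequence, then $\phi(u_l)\ge0$ together with~\eqref{n4}, \eqref{n8} and the bounds on $\|\nabla\phi'(u_l)\|_2$ would keep $\mathcal{L}(u_l,u_l)$ bounded below, which is a contradiction. Hence $\lambda_l:=\|u_l\|_\infty^{1/d}\to\infty$. Proposition~\ref{P20} then gives, along a subsequence, $P_l\to P$ locally uniformly, with $P$ the rescaled $\zeta$-profile supported in $\bar{\mathbb{B}}_{\rho_0}$ and $\int P=M_c$. In particular $M\ge M_c$. It remains to show that no mass escapes, i.e.
\begin{equation*}
\lim_{l\to\infty}\int_{\mathbb{B}_R\setminus\mathbb{B}_{\rho/\lambda_l}}u_l\,\rd x=0
\end{equation*}
for some fixed $\rho$ (after letting $\rho\to\infty$ if needed). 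Then $M=\lim\int_{\mathbb{B}_{\rho/\lambda_l}}u_l=\int_{\mathbb{B}_\rho}P\le M_c$.

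\textbf{Step 2: no mass outside the core, i.e.\ on $\mathbb{B}_{R/2}\setminus\mathbb{B}_{\kappa/\lambda_l}$ and on the collar $\mathbb{B}_R\setminus\mathbb{B}_{R/2}$.} Corollary~\ref{XX} shows that $u_l\to0$ uniformly on $[\kappa/\lambda_l,R/2]$, which handles the intermediate region. On $[R/2,R]$, use~\eqref{n9}: $\phi'(u_l(r))-\langle\phi'(u_l)\rangle$ is bounded uniformly for $r\ge R/2$. By~\eqref{n5}, $M\langle\phi'(u_l)\rangle\le 2\mathcal{L}(u_l,u_l)+M\phi''(1)\to-\infty$. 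Hence $\phi'(u_l)\to-\infty$ uniformly on $[R/2,R]$, so $u_l\to0$ uniformly there. Together with Corollary~\ref{XX}, this gives $\sup_{[\kappa/\lambda_l,R]}u_l\to0$. This is where the hypothesis~\eqref{n39} enters.

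\textbf{Step 3: uniform smallness is not enough, and this is the main obstacle.} Since $|\mathbb{B}_R|$ is finite, uniform smallness of $u_l$ on $[\kappa/\lambda_l,R]$ already forces $\int_{\mathbb{B}_R\setminus\mathbb{B}_{\kappa/\lambda_l}}u_l\to0$. The real difficulty is the intermediate annulus $\mathbb{B}_{\kappa/\lambda_l}\setminus\mathbb{B}_{\rho_0/\lambda_l}$ in rescaled variables, i.e.\ $\rho_0\le|x|\le\kappa$ (if $\kappa>\rho_0$). There $P_l\to P=0$ locally uniformly on the compact set $\bar{\mathbb{B}}_\kappa$, so $\int_{\mathbb{B}_\kappa\setminus\mathbb{B}_{\rho_0}}P_l\to0$. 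Combining the pieces,
\begin{equation*}
M=\int_{\mathbb{B}_{\lambda_lR}}P_l=\int_{\mathbb{B}_\kappa}P_l+\int_{\mathbb{B}_R\setminus\mathbb{B}_{\kappa/\lambda_l}}u_l\;\longrightarrow\;\int_{\mathbb{B}_\kappa}P=M_c .
\end{equation*}
The delicate point, should uniform smallness near $R$ fail to hold, is Step 2 on the collar. I expect~\eqref{n5} together with~\eqref{n9} to settle it as sketched above.
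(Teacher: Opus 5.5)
Your proof is correct and follows the paper's argument. Corollary~\ref{C20} (i.e.\ \eqref{n5}) together with \eqref{n9} makes $u_l$ vanish uniformly on the collar, Corollary~\ref{XX} handles $[\kappa/\lambda_l,R/2]$, and the locally uniform convergence from Proposition~\ref{P20} gives $M=\int_{\mathbb{B}_\kappa}P\le M_c$, which with $M\ge M_c$ yields $M=M_c$. Your two deviations are both valid and slightly more direct: you obtain $\|u_l\|_\infty\to\infty$ from \eqref{n4}, \eqref{n8} and $\phi\ge 0$ rather than from mass conservation, and you read uniform decay on $[R/2,R]$ straight off \eqref{n9} instead of using Arzel\`a--Ascoli. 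The ``obstacle'' in Step~3 and your closing hedge are unfounded, since Step~2 already proves the collar estimate. The final limit should read $\int_{\mathbb{B}_\kappa}P\le M_c$, with equality only when $\kappa\ge\rho_0$, but this does not affect the conclusion.
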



\begin{proof} 
According to Proposition~\ref{L19}, $u_l$ belongs to $\Sigma_a$ for all $l\ge 1$ with $a:=M/|\mathbb{B}_R|$. Since~\eqref{n6} and \eqref{n7x} guarantee that the sequence $(u_l)_{l\ge 1}$ is bounded in~$C^1([\ve,R])$ for each $\ve\in (0,R)$, the Arzel\`a-Ascoli theorem and a diagonal process imply the existence of $u_\infty\in C((0,R])$ and a subsequence of $(u_l)_{l\ge 1}$ (not relabeled) such that
\begin{equation}
	\lim_{l\to\infty} \|u_l - u_\infty\|_{C([\varepsilon,R])} = 0 \;\;\text{ for all}\;\; \varepsilon\in (0,R)\,. \label{xx}
\end{equation}
It next follows from~\eqref{n5} and \eqref{n39} that $\langle \phi'(u_l)\rangle\to -\infty$ as $l\to\infty$ and therefore
\begin{equation}\label{n41}
	\lim_{l\to\infty} \phi'(u_l(r)) = -\infty\,,\quad r\in (0,R]\,,
\end{equation}
according to~\eqref{n9}. In particular, for each $r\in (0,R]$, we deduce from~\eqref{phi} and~\eqref{n41} that $u_l(r)<1$ for $l$ large enough (possibly depending on $r$) with
\begin{equation*}
	- \phi'(u_l)(r) = m \int_{u_l(r)}^1 \frac{(1+s)^{m-1}}{s}\,\rd s \le -m 2^{m-1} \log{u_l(r)}\,. 
\end{equation*}
Letting $l\to\infty$ in the above inequality and using~\eqref{n41} lead us to
\begin{equation*}
	\lim_{l\to\infty} u_l(r) = 0\,, \quad r\in (0,R]\,.
\end{equation*}
Therefore, $u_\infty(r) = 0$ for $r\in (0,R]$ and, recalling~\eqref{xx}, we have shown that
\begin{equation}\label{n42}
	\lim_{l\to\infty} \|u_l\|_{C([\varepsilon,R])} = 0 \;\;\text{ for all}\;\; \varepsilon\in (0,R)\,.
\end{equation}
Now, for $\varepsilon\in (0,R)$ and $l\ge 1$,
\begin{equation*}
	M = \|u_l\|_1 \le |\mathbb{B}_\varepsilon| \|u_l\|_\infty + |\mathbb{B}_R\setminus \mathbb{B}_\varepsilon| \|u_l\|_{C([\varepsilon,R])} \,,
\end{equation*}
and thus
\begin{equation*}
	\frac{M}{|\mathbb{B}_\varepsilon| } \le \|u_l\|_\infty + \frac{|\mathbb{B}_R|}{|\mathbb{B}_\varepsilon|} \|u_l\|_{C([\varepsilon,R])} \,.
\end{equation*}
An immediate consequence of~\eqref{n42} and the above inequality is that
\begin{equation*}
	\frac{M}{|\mathbb{B}_\varepsilon|} \le \liminf_{l\to\infty} \|u_l\|_\infty \;\;\text{ for all}\;\; \varepsilon\in (0,R)\,.
\end{equation*}
Setting $\lambda_l^d:=\|u_l\|_\infty$ and letting $\varepsilon\to 0$ in the above inequality entail that
\begin{equation}
 	\lim_{l\to\infty} \|u_l\|_\infty = \lim_{l\to\infty} \lambda_l = \infty\,. \label{n42.5}
\end{equation}
In view of~\eqref{n42.5}, we may apply Corollary~\ref{XX} and deduce from~\eqref{limphi} and~\eqref{n42} that there is $\kappa>0$ such that
\begin{equation}\label{limphil}
	\lim_{l\to\infty} \left( \max_{\bar{\mathbb{B}}_R\setminus\mathbb{B}_{\kappa/\lambda_l}} u_l \right)=0\,.
\end{equation}
Now, recall from Proposition~\ref{P20} that the sequence $(P_l)_{l\ge 1}$, defined in~\eqref{t77}, converges in $C(\mathbb{R}^d)$ to $P$ given by~\eqref{420}. Since
\begin{align*}
	M & = \|u_l\|_1 = \int_{\mathbb{B}_{\lambda_l R}} P_l(x)\,\rd x = \int_{\mathbb{B}_\kappa} P_l(x) \,\rd x + \int_{\mathbb{B}_{\lambda_l R}\setminus\mathbb{B}_\kappa} P_l(x) \,\rd x \\
	& = \int_{\mathbb{B}_\kappa} P_l(x) \,\rd x + \int_{\mathbb{B}_R\setminus\mathbb{B}_{\kappa/\lambda_l}} u_l(x)\,\rd x \le \int_{\mathbb{B}_\kappa} P_l(x)\,\rd x +  \left( \max_{\bar{\mathbb{B}}_R\setminus\mathbb{B}_{\kappa/\lambda_l}} u_l \right) |\mathbb{B}_R|
\end{align*}
for $\lambda_l>\kappa/R$, we use the convergence of $(P_l)_{l\ge 1}$, along with~\eqref{limphil} and Proposition~\ref{P20}, to pass to the limit as $l\to\infty$ in the above identity and find that
\begin{align*}
	M & \le \int_{\mathbb{B}_\kappa} P(x) \,\rd x = \left( \frac{m-1}{m} \right)^{1/(m-1)} \int_{\mathbb{B}_\kappa} \varphi_+^{1/(m-1)}(x)\,\rd x\\
	&\le \int_{\mathbb{R}^d} \frac{1}{\rho_0^{(d-2)/(m-1)}} \zeta^{1/(m-1)}\left( \frac{x}{\rho_0} \right)\,\rd x = M_c\,.
\end{align*}
Having already shown in Proposition~\ref{P20} that necessarily $M\ge M_c$, the assertion follows.
\end{proof}

\begin{cor} \label{Cf}
If $0<M\not= M_c$, then
\begin{equation*}
	\mu_M^s := \inf_{(u,w)\in \mathcal{S}_{M,rad}} \mathcal{L}(u,w) > - \infty\,.
\end{equation*}
\end{cor}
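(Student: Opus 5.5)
The statement is essentially the contrapositive of Proposition~\ref{P3}, and I plan to prove it by contradiction. Fix $0<M\neq M_c$ and suppose $\mu_M^s=-\infty$.

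\emph{Step 1: the infimum is taken over a nonempty set.} The constant pair $u=w\equiv a:=M/|\mathbb{B}_R|$ belongs to $\mathcal{S}_{M,rad}$. Indeed, it is radially symmetric, positive, smooth, satisfies the Neumann condition, and has mass $M$. Moreover $v=\mathcal{K}[u-\langle u\rangle]=0$ and $\nabla\phi'(u)=0$, so~\eqref{E1rad} holds. Hence $\mathcal{S}_{M,rad}\neq\emptyset$ and $\mu_M^s\in[-\infty,\infty)$ is a genuine infimum. Should one prefer the convention $\inf\emptyset=+\infty$, this step is not even needed.

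\emph{Step 2: extract a minimizing sequence.} If $\mu_M^s=-\infty$, then by the definition of the infimum there is a sequence $(u_l,w_l)_{l\ge1}$ in $\mathcal{S}_{M,rad}$ with $\mathcal{L}(u_l,w_l)\le -l$ for every $l\ge 1$. In particular $\mathcal{L}(u_l,w_l)\to-\infty$, which is exactly hypothesis~\eqref{n39} of Proposition~\ref{P3}.

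\emph{Step 3: conclude.} Proposition~\ref{P3} then forces $M=M_c$, which contradicts $M\neq M_c$. Therefore $\mu_M^s>-\infty$.

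There is no real obstacle: all the analytic difficulty, namely the blow-up analysis, the sup+inf estimate and the mass identification, is contained in Proposition~\ref{P3}. The only point to check is that $\mathcal{L}$ takes finite values on $\mathcal{S}_{M,rad}$. This holds because its elements lie in $W^2_{p}\subset L_\infty$, and Lemma~\ref{lem.contL} applies.
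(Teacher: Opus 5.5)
Your proof is correct and matches the paper's argument: assuming $\mu_M^s=-\infty$ gives a sequence in $\mathcal{S}_{M,rad}$ satisfying \eqref{n39}, and Proposition~\ref{P3} then forces $M=M_c$. Your extra checks are valid but not needed for the argument; these are that $\mathcal{S}_{M,rad}$ is non-empty because it contains the constant state $u=w\equiv M/|\mathbb{B}_R|$, and that $\mathcal{L}$ is finite on $\mathcal{S}_{M,rad}$.
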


\begin{proof}
If $\mu_M^s = -\infty$, then there is a sequence $(u_l,w_l)_{l\ge 1}$ in $\mathcal{S}_{M,rad}$ satisfying~\eqref{n39}. Proposition~\ref{P3} now implies that $M=M_c$.
\end{proof}

\begin{proof}[Proof of Theorem~\ref{THM1}~\textbf{(II)}]
Let $\bar{M}\ge M>M_c$ and consider $(u^0,w^0)\in \mathcal{I}_{M,\bar{M},rad}(\mathbb{B}_R)$ such that
\begin{equation}
    \mathcal{L}(u^0,w^0) < \mu_M^s\,, \label{f01}
\end{equation}
the existence of such initial values being guaranteed by Proposition~\ref{prop1} and Corollary~\ref{Cf}. Setting $(u,w)=\boldsymbol{\Psi}\big(u^0,w^0\big)$, assume for contradiction that $u\in L_\infty((0,\infty)\times \Omega)$. It then follows from Theorem~\ref{T1:Ex}, Proposition~\ref{PP2}, Lemma~\ref{Wp}, Lemma~\ref{lem.contL}, and LaSalle's invariance principle that the $\omega$-limit set $\omega(u^0,w^0)$ of $\boldsymbol{\Psi}\big(u^0,w^0\big)$ is non-empty, compact in $W_{p}^{1,+}(\Omega) \times L_\infty^+(\Omega)$, and included in $\mathcal{S}_{M,rad}$ with 
\begin{equation*}
    \mathcal{L}(u_\infty,w_\infty) = \lim_{t\to\infty} \mathcal{L}(u(t),w(t)) \le \mathcal{L}(u^0,w^0)\,, \quad (u_\infty,w_\infty)\in \omega(u^0,w^0)\,.
\end{equation*}
Recalling~\eqref{f01}, we have thus found $(u_\infty,w_\infty)\in \mathcal{S}_{M,rad}$ such that \mbox{$\mathcal{L}(u_\infty,w_\infty)<\mu_M^s$}, contradicting the definition of $\mu_M^s$. This completes the proof of Theorem~\ref{THM1}~\textbf{(II)}. 
\end{proof}

\begin{rem}
In~\eqref{O2}, it is stated that
\begin{equation}
    \inf_{\mathcal{S}_{M,rad}} \mathcal{L} = \inf_{\mathcal{I}_{M,\bar{M},rad}(\mathbb{B}_R)} \mathcal{L}\,. \tag{\ref{O2}}
\end{equation}
Indeed, assuming 
\begin{equation*}
    \inf_{\mathcal{S}_{M,rad}} \mathcal{L} > \inf_{\mathcal{I}_{M,\bar{M},rad}(\mathbb{B}_R)} \mathcal{L}
\end{equation*}
readily leads to a contradiction by using the same argument as above in the proof of Theorem~\ref{THM1}~\textbf{(II)}.
\end{rem}


\appendix
\section{Properties of \texorpdfstring{$\phi$}{}}\label{sec.apA}
We derive some properties of the function $\phi: (0,\infty)\rightarrow (0,\infty)$ defined by
\begin{equation}
	\phi''(z)=m\frac{(1+z)^{m-1}}{z}\,,\quad z>0\,,\qquad \phi(1)=\phi'(1)=0\,, \tag{\ref{phi}}
\end{equation}
and begin with upper and lower bounds of $\phi$ in terms of the related power function $s\mapsto s^m$.

\begin{lem}\label{lemA1}
For each $\varepsilon>0$, there is a positive constant $c_\phi(\varepsilon)>0$, depending only on $m$ and $\varepsilon$, such that 
\begin{equation}
    \phi(s) \le \frac{1+\varepsilon}{m-1} s^m + c_\phi(\varepsilon)\,, \quad s\ge 0\,. \label{eq.A1}
\end{equation}
In addition,
\begin{equation}
    \phi(s)\ge \frac{s^m}{m-1} + 1 - \frac{m}{m-1} s\,, \quad s\ge 0\,. \label{eq.A2}
\end{equation}
\end{lem}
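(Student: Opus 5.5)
The plan is to get an explicit formula for $\phi$ and then compare it with the power function $s^m/(m-1)$. Integrating \eqref{phi} twice with $\phi(1)=\phi'(1)=0$ gives
\begin{equation*}
\phi'(s)=m\int_1^s\frac{(1+z)^{m-1}}{z}\,\rd z\,,\qquad \phi(s)=\int_1^s \phi'(\sigma)\,\rd\sigma=m\int_1^s\frac{(1+z)^{m-1}}{z}(s-z)\,\rd z\,,
\end{equation*}
where the last identity follows from Taylor's formula with integral remainder. The same formula applies for $s\in(0,1)$ with the orientation reversed. Since $m\in(1,2)$, the integrand has only a logarithmic singularity at $z=0$, so $\phi$ extends continuously to $s=0$ with $\phi(0)=\int_0^1 \phi''(z)z\,\rd z<\infty$. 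Both estimates are then comparisons of second derivatives, together with control of lower-order terms.

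\textbf{Lower bound \eqref{eq.A2}.} Let $g(s):=\frac{s^m}{m-1}+1-\frac{m}{m-1}s$. Then $g(1)=0$ and $g'(1)=0$, which match $\phi(1)=\phi'(1)=0$, and
\begin{equation*}
g''(s)=m s^{m-2}=m\frac{s^{m-1}}{s}\le m\frac{(1+s)^{m-1}}{s}=\phi''(s)\,,
\end{equation*}
because $m-1>0$. So $h:=\phi-g$ is convex on $(0,\infty)$ with $h(1)=h'(1)=0$. Hence $h\ge0$ on $(0,\infty)$, and by continuity also at $s=0$. This step is routine.

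\textbf{Upper bound \eqref{eq.A1}.} The key observation is that $(1+z)^{m-1}\le(1+\eta)^{m-1}z^{m-1}$ for $z\ge 1/\eta$. So for large $z$, $\phi''(z)\le(1+\varepsilon)\,m z^{m-2}$ once $\eta$ is chosen with $(1+\eta)^{m-1}=1+\varepsilon$. Set $z_\varepsilon:=\max\{1,1/\eta\}$. On $[0,z_\varepsilon]$, $\phi$ is continuous and hence bounded by some constant depending only on $m$ and $\varepsilon$. For $s\ge z_\varepsilon$, integrate $\phi''\le(1+\varepsilon)(s^m/(m-1))''$ twice from $z_\varepsilon$:
\begin{equation*}
\phi(s)\le\phi(z_\varepsilon)+\phi'(z_\varepsilon)(s-z_\varepsilon)+\frac{1+\varepsilon}{m-1}\big(s^m-z_\varepsilon^m-m z_\varepsilon^{m-1}(s-z_\varepsilon)\big)\,.
\end{equation*}
This leaves a term linear in $s$ with a coefficient $\phi'(z_\varepsilon)-(1+\varepsilon)\frac{m}{m-1}z_\varepsilon^{m-1}$ of unclear sign.

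The main obstacle is to absorb this linear term into the constant without losing the factor $1+\varepsilon$. I will use Young's inequality: since $m>1$, for any $\theta>0$,
\begin{equation*}
C s\le \theta s^m+C_\theta\,.
\end{equation*}
I will apply the argument above with $\varepsilon/2$ in place of $\varepsilon$, and then absorb the linear term with $\theta=\frac{\varepsilon}{2(m-1)}$. This yields \eqref{eq.A1} with a constant $c_\phi(\varepsilon)$ that depends only on $m$ and $\varepsilon$. Finally, $c_\phi(\varepsilon)$ is enlarged if necessary to cover $\sup_{[0,z_\varepsilon]}\phi$, which completes the proof.
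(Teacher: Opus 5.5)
Your proof is correct. For \eqref{eq.A2} you argue exactly as the paper does: $\phi$ minus the comparison function is convex and vanishes to first order at $s=1$.

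For \eqref{eq.A1} your route is different.

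\textbf{Paper's route.} The paper uses the global sublinearity bound $(1+z)^{m-1}\le 1+z^{m-1}$, valid for all $z>0$, so that $\phi''(z)\le m/z+mz^{m-2}$ everywhere. Integrating twice from $1$ gives, for every $s\ge 0$ at once, the explicit bound
\begin{equation*}
\phi(s)\le \frac{s^m}{m-1}+ms\log s-\frac{m^2}{m-1}(s-1)-\frac{1}{m-1}\,.
\end{equation*}
The linear part has negative slope, so the only term left to absorb is $ms\log s\le \varepsilon s^m+c(\varepsilon)$ for $s\ge1$.

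\textbf{Your route.} You compare $\phi''$ with $(1+\varepsilon)mz^{m-2}$ only for $z\ge z_\varepsilon$. You handle $[0,z_\varepsilon]$ by continuity of $\phi$ up to $0$. You then need an extra Young-inequality step, run with $\varepsilon/2$, to absorb a linear term of unknown sign.

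\textbf{What each buys.} The paper's argument is shorter and treats all $s\ge0$ in one formula. It yields a computable $c_\phi(\varepsilon)$ and shows that the true excess of $\phi$ over $s^m/(m-1)$ is only of order $s\log s$. Yours is less explicit but softer and more robust. It only uses $\phi''(z)\sim mz^{m-2}$ as $z\to\infty$, together with the integrability of $z\phi''(z)$ near $0$. It would therefore apply verbatim to any diffusion with the same behavior at infinity, without needing a global algebraic inequality like subadditivity.
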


\begin{proof}
Thanks to the sublinearity of $s\mapsto s^{m-1}$, there holds $\phi''(s)\le m\big(1+s^{m-2}\big)/s$ for $s>0$, an inequality which we integrate to obtain
\begin{align*}
    \phi'(s) & \le m \log{s} + \frac{m}{m-1} \big(s^{m-1} - 1\big)\,, \quad s\ge 1\,, \\
    -\phi'(s) & \le - m \log{s} + \frac{m}{m-1} \big(1 - s^{m-1}\big)\,, \quad s\in (0,1)\,.
\end{align*}
Integrating once more leads us to
\begin{align*}
    \phi(s) & \le m \big(s\log{s} - s +1\big) + \frac{s^m-1}{m-1} - \frac{m}{m-1} (s-1) \\
    & = \frac{s^m}{m-1} + m s\log{s} - \frac{m^2}{m-1} (s-1) - \frac{1}{m-1}
\end{align*}
for $s\ge 0$. Now, consider $\varepsilon>0$. Since $m s\log{s}\le 0$ for $s\in (0,1)$ and since there is $c(\varepsilon)>0$ depending only on $m$ and $\varepsilon$ such that $m s\log{s} \le \varepsilon s^m + c(\varepsilon)$ for $s\ge 1$, we readily obtain~\eqref{eq.A1}.

Next, the function $\xi$, defined as
\begin{equation*}
	\xi(s):=\phi(s)-\frac{s^{m}-1}{m-1}-\frac{m}{m-1}(1-s)\,,\quad s\ge 0\,,
\end{equation*}
is convex on $(0,\infty)$ and has a minimum at $z=1$ according to~\eqref{phi}. Therefore, $\xi(s)\ge \xi(1)=0$ for $s\ge 0$, hence~\eqref{eq.A2}.
\end{proof}

We next derive estimates involving $\phi$ and $\phi'$, which we use throughout the analysis of stationary solutions to~\eqref{E}.

\begin{lem}\label{L20}
The function $\phi$ obeys the inequalities
\begin{equation}\label{L20a}
	s\phi'(s)\le m\phi(s) +\phi''(1)(s-1)\,,\quad s> 0\,,
\end{equation}
and
\begin{equation}\label{L20b}
	\frac{m}{m-1}(s^{m-1}-1)\le \phi'(s)\le \frac{m 2^{m-1}}{m-1} (s^{m-1}-1)\,,\quad s\ge 1\,,
\end{equation}
with $\phi'(s)<1$ for $s\in (0,1)$.
\end{lem}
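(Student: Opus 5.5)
We need to prove Lemma~\ref{L20}: the inequality \eqref{L20a}, the two-sided bound \eqref{L20b} for $s\ge1$, and the sign statement on $(0,1)$. (The stated ``$\phi'(s)<1$'' is presumably meant as $\phi'(s)<0$, which is what the paper uses. We prove that, and it implies the bound $<1$.) Everything comes from integrating the defining relation $\phi''(z)=m(1+z)^{m-1}/z$ with $\phi(1)=\phi'(1)=0$.

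\emph{Sign on $(0,1)$.} Since $\phi''>0$, $\phi'$ is strictly increasing. Together with $\phi'(1)=0$ this gives $\phi'(s)<0$ for $s\in(0,1)$.

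\emph{Bound \eqref{L20b}.} For $s\ge1$ we have
\[
\phi'(s)=m\int_1^s \frac{(1+z)^{m-1}}{z}\,\rd z .
\]
For $z\ge1$, $z^{m-1}\le(1+z)^{m-1}\le(2z)^{m-1}$. The integrand is therefore squeezed between $z^{m-2}$ and $2^{m-1}z^{m-2}$. Integrating $\int_1^s z^{m-2}\,\rd z=(s^{m-1}-1)/(m-1)$ gives both sides of \eqref{L20b}.

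\emph{Inequality \eqref{L20a}.} Set
\[
g(s):=m\phi(s)+\phi''(1)(s-1)-s\phi'(s),
\]
so that $g(1)=0$. We compute
\[
g'(s)=(m-1)\phi'(s)+\phi''(1)-s\phi''(s)=(m-1)\phi'(s)+\phi''(1)-m(1+s)^{m-1}.
\]
Note that $\phi''(1)=m2^{m-1}$, so $g'(1)=0$. Differentiating again,
\[
g''(s)=(m-1)\phi''(s)-m(m-1)(1+s)^{m-2}=m(m-1)(1+s)^{m-2}\Big(\frac{1+s}{s}-1\Big)=\frac{m(m-1)(1+s)^{m-2}}{s}>0 .
\]
Hence $g$ is convex on $(0,\infty)$ with $g(1)=g'(1)=0$. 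It follows that $g\ge0$, which is exactly \eqref{L20a}.

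There is no real obstacle. The only point needing care is the algebra in $g''$, in particular the simplification $s\phi''(s)=m(1+s)^{m-1}$, which makes the second derivative positive.
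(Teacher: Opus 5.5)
Your proposal is correct and follows essentially the paper's argument: the paper sets $\psi(z)=m\phi(z)-z\phi'(z)$, uses $\psi''(z)=m(m-1)(1+z)^{m-2}/z\ge 0$ together with $\psi(1)=0$ and $\psi'(1)=-\phi''(1)$, and integrates. That is your convexity argument for $g=\psi+\phi''(1)(\cdot-1)$. The bound on $\phi'$ for $s\ge 1$ comes from the same comparison $z^{m-1}\le(1+z)^{m-1}\le(2z)^{m-1}$. Your reading of the last claim as $\phi'<0$ on $(0,1)$ matches how the paper uses it later, in Lemmas~\ref{L25} and~\ref{L28}.
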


\begin{proof}
Setting $\psi(z):=m\phi(z)-z\phi'(z)$ for $z>0$, it readily follows from~\eqref{phi} that
\begin{equation*}
	\psi''(z)=m(m-1)\frac{(1+z)^{m-2}}{z}\ge 0\,,\quad z>0\,,\qquad \psi(1)=0\,,\qquad \psi'(1)=-\phi''(1)\,.
\end{equation*}
Hence $\psi'(z)\ge -\phi''(1)$ for $z\ge 1$ while $\psi'(z)\le -\phi''(1)$ for $z\le 1$. Integration then yields~\eqref{L20a}.

Moreover, we note that, for $z\ge 1$,
\begin{equation*}
	\phi'(z)=m\int_1^z \frac{(1+s)^{m-1}}{s}\,\rd s\le m 2^{m-1}\int_1^z s^{m-2}\,\rd s
\end{equation*}
while
\begin{equation*}
	\phi'(z)=m\int_1^z \frac{(1+s)^{m-1}}{s}\,\rd s\ge m\int_1^z s^{m-2}\,\rd s\,.
\end{equation*}
This gives~\eqref{L20b}. Clearly, $\phi'(z)<1$ for $z\in (0,1)$ by~\eqref{phi}.
\end{proof}

\section{Properties of \texorpdfstring{$\zeta$}{}}\label{sec.apB}

We provide here connections between various quantities involving the solution $\zeta$ to~\eqref{b1}. Recall that
\begin{equation}
    M_c = \int_{\mathbb{B}_1} \zeta^{1/(m-1)}\,\rd x \label{eq.B1}
\end{equation}
according to~\eqref{b2x}.

\begin{prop}\label{propB1}
The potential $E_d*\zeta^{1/(m-1)}$ of $\zeta^{1/(m-1)}$ is given by
\begin{equation}
    \big(E_d*\zeta^{1/(m-1)}\big)(x) = \frac{m}{m-1} \zeta(x) + \frac{2-m}{m-1} \frac{I_\zeta}{M_c}\,, \quad x\in\mathbb{B}_1\,, \label{eq.B2}
\end{equation}
and
\begin{equation}
   \big(E_d*\zeta^{1/(m-1)}\big)(x) = \frac{M_c}{(d-2)\sigma_d} \frac{1}{|x|^{d-2}}\,, \quad x\in\mathbb{R}^d\setminus\mathbb{B}_1\,, \label{eq.B3} 
\end{equation}
where
\begin{equation}
    I_\zeta := \int_{\mathbb{B}_1} \zeta^{m/(m-1)}(x)\,\rd x\,. \label{eq.B4}
\end{equation}
Moreover,
\begin{equation}
    I_\zeta = \frac{m-1}{2-m}\frac{M_c^2}{(d-2)\sigma_d} = \frac{m}{m-1} \int_{\mathbb{B}_1} |\nabla\zeta(x)|^2\,\rd x\,. \label{eq.B5}
\end{equation}
\end{prop}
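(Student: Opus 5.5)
The idea is to identify $\Phi:=E_d*\zeta^{1/(m-1)}$ through its equation and its radial symmetry, and then read off $I_\zeta$ from integral identities. Set $g:=\zeta^{1/(m-1)}$, extended by zero outside $\mathbb{B}_1$. Then $g$ is radial, continuous and compactly supported, with $\int g=M_c$ by~\eqref{eq.B1}. Hence $\Phi$ is radial, of class $C^1$, and satisfies $-\Delta\Phi=g$ in $\mathbb{R}^d$ with $\Phi(x)\to 0$ as $|x|\to\infty$. Newton's theorem for radial densities then gives, for $|x|\ge 1$, $\Phi(x)=M_c E_d(x)=M_c|x|^{2-d}/((d-2)\sigma_d)$, which is~\eqref{eq.B3}.

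Inside $\mathbb{B}_1$, by~\eqref{b1} the function $\frac{m}{m-1}\zeta$ satisfies $-\Delta\big(\frac{m}{m-1}\zeta\big)=g$. So $\Phi-\frac{m}{m-1}\zeta$ is harmonic and radial in $\mathbb{B}_1$, and bounded at the origin, hence equal to a constant $K$. We determine $K$ on $\partial\mathbb{B}_1$: since $\zeta=0$ there, continuity with~\eqref{eq.B3} gives $K=M_c/((d-2)\sigma_d)$. This yields~\eqref{eq.B2}, once we check the identity $\frac{2-m}{m-1}\frac{I_\zeta}{M_c}=\frac{M_c}{(d-2)\sigma_d}$, which is exactly the first equality in~\eqref{eq.B5}. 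So the proof reduces to proving~\eqref{eq.B5}.

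For the second equality in~\eqref{eq.B5}, multiply~\eqref{b1} by $\zeta$ and integrate by parts, using $\zeta=0$ on $\partial\mathbb{B}_1$. This gives $\int_{\mathbb{B}_1}|\nabla\zeta|^2=\frac{m-1}{m}I_\zeta$.

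For the first equality, which is the main step, I would use a Pohozaev identity. Multiply~\eqref{b1} by $x\cdot\nabla\zeta$ and integrate over $\mathbb{B}_1$. With $F(\zeta)=\frac{m-1}{m}\cdot\frac{m-1}{m}\zeta^{m/(m-1)}$, the standard computation gives
\[
\frac{d-2}{2}\int|\nabla\zeta|^2+\frac12\int_{\partial\mathbb{B}_1}|\partial_r\zeta|^2=d\int F(\zeta).
\]
The boundary flux is known: $\partial_r\zeta(1)\,\sigma_d=\int_{\partial\mathbb{B}_1}\partial_r\zeta=-\frac{m-1}{m}M_c$, so $|\partial_r\zeta(1)|=\frac{(m-1)M_c}{m\sigma_d}$. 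Now substitute $\int|\nabla\zeta|^2=\frac{m-1}{m}I_\zeta$ and use $d(m-1)/m=(d-2)/(m)\cdot\frac{d}{d-1}\cdot\ldots$, together with $m=2(d-1)/d$, to simplify the coefficients. This yields $c\,I_\zeta=\frac{\sigma_d}{2}\big(\frac{(m-1)M_c}{m\sigma_d}\big)^2$ with an explicit $c$. I expect the main obstacle to be bookkeeping the constants so that $c$ matches exactly: one must check that $d\frac{(m-1)^2}{m^2}-\frac{d-2}{2}\frac{m-1}{m}$ reduces to $\frac{(2-m)(d-2)(m-1)}{2m^2}$ (using $d(m-1)=d-2$), which rearranges to $I_\zeta=\frac{m-1}{2-m}\frac{M_c^2}{(d-2)\sigma_d}$. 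A secondary point is justifying the Pohozaev computation, which is legitimate because $\zeta$ is a classical radial solution that is $C^2$ up to the boundary. Alternatively, one can derive this equality from the radial ODE $(r^{d-1}\zeta')'=-\frac{m-1}{m}r^{d-1}g$ directly.
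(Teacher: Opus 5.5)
Your proposal is correct. Everything except the key identity matches the paper's proof: Newton's theorem gives \eqref{eq.B3}; $E_d*\zeta^{1/(m-1)}-\frac{m}{m-1}\zeta$ is harmonic in $\mathbb{B}_1$ with boundary value $M_c/((d-2)\sigma_d)$; and testing \eqref{b1} with $\zeta$ gives the second equality in \eqref{eq.B5}. The genuine difference is how you get the first equality in \eqref{eq.B5}. The paper imports from \cite{BCL2009} the identity \eqref{eq.B6}, $I_\zeta=\frac{m-1}{2}\int_{\mathbb{B}_1}\zeta^{1/(m-1)}\big(E_d*\zeta^{1/(m-1)}\big)\,\rd x$, which says the free energy of \eqref{qsp} vanishes at $\zeta^{1/(m-1)}$. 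It then multiplies the constant-difference relation by $\zeta^{1/(m-1)}$ and integrates, which yields $\frac{2}{m-1}I_\zeta=\frac{m}{m-1}I_\zeta+\frac{M_c^2}{(d-2)\sigma_d}$. You instead use the Pohozaev identity on $\mathbb{B}_1$ together with the boundary flux $|\partial_r\zeta(1)|=\frac{(m-1)M_c}{m\sigma_d}$. Your coefficient $\frac{(m-1)(d-2)(2-m)}{2m^2}$ is right and gives exactly $I_\zeta=\frac{m-1}{2-m}\frac{M_c^2}{(d-2)\sigma_d}$. The intermediate phrase with a factor $\frac{d}{d-1}$ is a slip; it is simply $d(m-1)/m=(d-2)/m$. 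Your route is self-contained. It needs only \eqref{b1} and $C^2$ regularity of $\zeta$ up to $\partial\mathbb{B}_1$, which holds since $1/(m-1)=d/(d-2)>1$, and not the variational characterization of $\zeta$ from \cite{BCL2009}. The paper's route is shorter given that reference, and it produces \eqref{eq.B6} directly, which is reused in \eqref{ub07}. Both arguments rest on the same dilation invariance, Pohozaev's identity being its PDE form. With your approach, \eqref{eq.B6} is recovered afterwards by multiplying \eqref{eq.B2} by $\zeta^{1/(m-1)}$ and integrating, since $\frac{m}{m-1}+\frac{2-m}{m-1}=\frac{2}{m-1}$.
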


\begin{proof}
According to \cite[Propositions~3.4 \&~3.5]{BCL2009},
\begin{equation*}
    \frac{1}{m-1} \int_{\mathbb{R}^d} \zeta^{m/(m-1)}(x)\,\rd x = \frac{1}{2} \int_{\mathbb{R}^d} \zeta^{1/(m-1)}(x) \big(E_d*\zeta^{1/(m-1)}\big)\,\rd x\,,
\end{equation*}
from which we deduce that
\begin{equation}
    I_\zeta = \frac{m-1}{2} \int_{\mathbb{B}_1} \zeta^{1/(m-1)}(x) \big(E_d*\zeta^{1/(m-1)}\big)(x)\,\rd x\,, \label{eq.B6}
\end{equation}
after taking into account that the support of $\zeta$ is exactly the ball $\bar{\mathbb{B}}_1$. 

We next take advantage of the radial symmetry of $\zeta$ to deduce from \cite[Theorem~9.7, formula~(5)]{LL2001} that, for $x\in\mathbb{R}^d$,
\begin{equation}
\begin{split}
    \big(E_d*\zeta^{1/(m-1)}\big)(x) & = \frac{1}{(d-2)|x|^{d-2}} \int_0^{|x|} \zeta^{1/(m-1)}(s) s^{d-1}\,\rd s \\
    & \quad + \frac{1}{d-2} \int_{|x|}^\infty s \zeta^{1/(m-1)}(s)\, \rd s\,. \label{eq.B7}
\end{split}
\end{equation}
In particular, for $x\in\partial\mathbb{B}_1$, 
\begin{equation*}
    \left( E_d*\zeta^{1/(m-1)} - \frac{m}{m-1} \zeta \right)(x) = \frac{1}{(d-2)} \int_0^{1} \zeta^{1/(m-1)}(s) s^{d-1}\,\rd s = \frac{M_c}{(d-2)\sigma_d}
\end{equation*}
in view of~\eqref{eq.B1}, while it follows from~\eqref{b1} that
\begin{equation*}
    -\Delta\left( E_d*\zeta^{1/(m-1)} - \frac{m}{m-1} \zeta \right) = 0 \;\text{ in }\; \mathbb{B}_1\,.
\end{equation*}
Consequently,
\begin{equation}
   E_d*\zeta^{1/(m-1)} = \frac{m}{m-1} \zeta + \frac{M_c}{(d-2)\sigma_d} \;\text{ in }\; \mathbb{B}_1\,. \label{eq.B8}
\end{equation}
We now multiply~\eqref{eq.B8} by $\zeta^{1/(m-1)}$ and integrate over $\mathbb{B}$ to obtain
\begin{align}
    & \int_{\mathbb{B}_1} \zeta^{1/(m-1)}(x) \big(E_d*\zeta^{1/(m-1)}\big)(x)\,\rd x \nonumber\\
    & \qquad = \frac{m}{m-1} I_\zeta + \frac{M_c}{(d-2)\sigma_d} \int_{\mathbb{B}_1} \zeta^{1/(m-1)}(x)\,\rd x \nonumber\\
    & \qquad = \frac{m}{m-1} I_\zeta + \frac{M_c^2}{(d-2)\sigma_d}\,, \label{eq.B9}
\end{align}
thanks to~\eqref{eq.B1}. The first identity in~\eqref{eq.B5} is then an immediate consequence of~\eqref{eq.B6} and~\eqref{eq.B9}, which we combine with~\eqref{eq.B8} to obtain~\eqref{eq.B2}, while the second identity in~\eqref{eq.B5} is deduced from~\eqref{b1} after multiplication by $\zeta$ and integration over $\mathbb{B}_1$.  Finally,~\eqref{eq.B3} readily follows from~\eqref{eq.B7} in view of~\eqref{eq.B1}, since the support of $\zeta$ is exactly the ball $\bar{\mathbb{B}}_1$. 
\end{proof}

\bibliographystyle{siam}
\bibliography{Lit_Chem}

\begin{thebibliography}{10}

\bibitem{Am1989}
{\sc H.~Amann}, {\em Dynamic theory of quasilinear parabolic systems. {III}.
  {G}lobal existence}, Math. Z., 202 (1989), pp.~219--250.

\bibitem{AmE90}
\leavevmode\vrule height 2pt depth -1.6pt width 23pt, {\em Erratum: ``{D}ynamic
  theory of quasilinear parabolic systems. {III}. {G}lobal existence'' [{M}ath.
  {Z}. {\bf 202} (1989), no. 2, 219--250]}, Math. Z., 205 (1990), p.~331.

\bibitem{Am1993}
\leavevmode\vrule height 2pt depth -1.6pt width 23pt, {\em {Nonhomogeneous
  linear and quasilinear elliptic and parabolic boundary value problems}}, in
  {Function spaces, differential operators and nonlinear analysis
  ({F}riedrichroda, 1992)}, vol.~133 of {Teubner-Texte Math.}, Teubner,
  Stuttgart, 1993, p.~9–126.

\bibitem{LQPP}
\leavevmode\vrule height 2pt depth -1.6pt width 23pt, {\em Linear and
  quasilinear parabolic problems. {V}ol. {I}}, vol.~89 of Monographs in
  Mathematics, Birkh\"{a}user Boston, Inc., Boston, MA, 1995.
\newblock Abstract linear theory.

\bibitem{Bi2020}
{\sc P.~Biler}, {\em Singularities of solutions to chemotaxis systems}, vol.~6
  of De Gruyter Series in Mathematics and Life Sciences, De Gruyter, Berlin,
  [2020] \copyright 2020.

\bibitem{BCL2009}
{\sc A.~Blanchet, J.~A. Carrillo, and {\relax Ph}.~Lauren\c{c}ot}, {\em
  Critical mass for a {P}atlak-{K}eller-{S}egel model with degenerate diffusion
  in higher dimensions}, Calc. Var. Partial Differential Equations, 35 (2009),
  pp.~133--168.

\bibitem{DLY2026}
{\sc C.~Dai, Y.~Li, and J.~Yan}, {\em Critical mass phenomenon in a nonlinear
  {K}eller-{S}egel system with indirect signal production}, J. Evol. Equ., 26
  (2026), pp.~Paper No. 61, 40.

\bibitem{FLT2023}
{\sc M.~Fuest, J.~Lankeit, and Y.~Tanaka}, {\em Critical mass phenomena in
  higher dimensional quasilinear {K}eller-{S}egel systems with indirect signal
  production}, Math. Methods Appl. Sci., 46 (2023), pp.~14362--14378.

\bibitem{FJ2022}
{\sc K.~Fujie and J.~Jiang}, {\em A note on construction of nonnegative initial
  data inducing unbounded solutions to some two-dimensional {Keller}-{Segel}
  systems}, Math. Eng. (Springfield), 4 (2022), p.~12.
\newblock Id/No 45.

\bibitem{GS1981}
{\sc B.~Gidas and J.~Spruck}, {\em Global and local behavior of positive
  solutions of nonlinear elliptic equations}, Comm. Pure Appl. Math., 34
  (1981), pp.~525--598.

\bibitem{GT2001}
{\sc D.~Gilbarg and N.~S. Trudinger}, {\em Elliptic partial differential
  equations of second order}, Classics in Mathematics, Springer-Verlag, Berlin,
  1998~ed., 2001.

\bibitem{Ho2002}
{\sc D.~Horstmann}, {\em On the existence of radially symmetric blow-up
  solutions for the {Keller}-{Segel} model}, J. Math. Biol., 44 (2002),
  pp.~463--478.

\bibitem{HW2001}
{\sc D.~Horstmann and G.~Wang}, {\em Blow-up in a chemotaxis model without
  symmetry assumptions}, European J. Appl. Math., 12 (2001), pp.~159--177.

\bibitem{JL2026}
{\sc T.~Jin and Y.~Li}, {\em Finite time blow-up in a quasilinear
  {K}eller-{S}egel system with indirect signal production}, Nonlinear Anal.
  Real World Appl., 89 (2026), pp.~Paper No. 104523, 15.

\bibitem{LSU1968}
{\sc O.~A. Lady\v{z}enskaja, V.~A. Solonnikov, and N.~N.
  Ural\textquotesingle~tseva}, {\em Linear and quasilinear equations of
  parabolic type}, vol.~Vol. 23 of Translations of Mathematical Monographs,
  American Mathematical Society, Providence, RI, 1968.
\newblock Translated from the Russian by S. Smith.

\bibitem{La2019}
{\sc {\relax Ph}.~Lauren\c{c}ot}, {\em Global bounded and unbounded solutions
  to a chemotaxis system with indirect signal production}, Discrete Contin.
  Dyn. Syst. Ser. B, 24 (2019), pp.~6419--6444.

\bibitem{LS1994}
{\sc Y.~Y. Li and I.~Shafrir}, {\em Blow-up analysis for solutions of {$-\Delta
  u=Ve^u$} in dimension two}, Indiana Univ. Math. J., 43 (1994),
  pp.~1255--1270.

\bibitem{LL2001}
{\sc E.~H. Lieb and M.~Loss}, {\em Analysis}, vol.~14 of Graduate Studies in
  Mathematics, American Mathematical Society, Providence, RI, second~ed., 2001.

\bibitem{ML2024}
{\sc X.~Mao and Y.~Li}, {\em Dirac-type aggregation with full mass in a
  chemotaxis model}, Discrete Contin. Dyn. Syst. Ser. S, 17 (2024),
  pp.~1513--1528.

\bibitem{TW2017}
{\sc Y.~Tao and M.~Winkler}, {\em Critical mass for infinite-time aggregation
  in a chemotaxis model with indirect signal production}, J. Eur. Math. Soc.
  (JEMS), 19 (2017), pp.~3641--3678.

\bibitem{WY2003}
{\sc G.~Wang and D.~Ye}, {\em On a nonlinear elliptic equation arising in a
  free boundary problem}, Math. Z., 244 (2003), pp.~531--548.

\end{thebibliography}

\end{document}